\documentclass[11pt]{amsart}
    \pdfoutput=1

    \usepackage{graphicx}
    \usepackage[english]{babel}
    \usepackage{graphicx}
    \usepackage{framed}
    \usepackage[normalem]{ulem}
    \usepackage{amsmath}
    \usepackage{dynkin-diagrams}
    \usepackage{amsthm}
    \usepackage{amssymb}
    \usepackage{hyperref} 
    \usepackage[capitalize]{cleveref}
    \usepackage{comment}
    \usepackage{tikz}
    \usetikzlibrary{matrix,calc}
    \usepackage{mathtools}
    \usepackage{tikz-cd}
    \usepackage{amsfonts}
    \usepackage{enumerate}
    \usepackage[utf8]{inputenc}
    \usepackage[top=1 in,bottom=1in, left=1 in, right=1 in]{geometry}
       \usepackage{stmaryrd}
       \usepackage[maxbibnames=99, style=alphabetic, maxalphanames=99, minalphanames=6, 
sorting=nyt, giveninits=true]{biblatex}
\AtEveryBibitem{
    \clearfield{issn}
  \clearfield{isbn}
  \clearfield{doi}
  \clearlist{location}
  \clearfield{month}
  \clearfield{day}
  \clearfield{series}
  \clearlist{language}
  \clearlist{translator}
  \renewbibmacro{in:}{}
}
\newcommand{\vi}{${\en\sf {(i)}}\;$}
\newcommand{\vii}{${\;\sf {(ii)}}\;$}

    \usepackage[colorinlistoftodos]{todonotes}
    \usepackage[all]{xy}
    \usepackage{mathrsfs}

    \newcommand{\Z}{\mathbb{Z}} 
    \newcommand{\G}{\mathbb{G}} 
    \renewcommand{\t}{{\mathfrak{t}}}
    \newcommand{\fc}{{\mathfrak{c}}}
    
    \newcommand{\SL}{\operatorname{SL}}

    \newcommand{\LG}{\mathfrak{g}}
    
    \newcommand{\LT}{\mathfrak{t}}
    
    \newcommand{\LB}{\mathfrak{b}}
    
\def\ccirc{{{}_{\,{}^{^\circ}}}}
    \newcommand{\beq}{\begin{equation}\label}
\newcommand{\eeq}{\end{equation}}

    \newcommand{\Hom}{\operatorname{Hom}} 
    \newcommand{\End}{\operatorname{End}} 

    \newcommand{\en}{{\enspace}}
    \DeclareMathOperator{\Ker}{\mathrm{Ker}}
    \DeclareMathOperator{\Res}{\mathrm{Res}}
    \renewcommand{\O}{\mathcal{O}} 
    \renewcommand{\\}{\backslash}

   \theoremstyle{plain}
    \newtheorem{Theorem}[equation]{Theorem}
    \newtheorem{Proposition}[equation]{Proposition}
    \newtheorem{Lemma}[equation]{Lemma}
    \newtheorem{Corollary}[equation]{Corollary}
   
    \theoremstyle{definition}

 \newtheorem{Conjecture}[equation]{Conjecture}
    
        \newtheorem{Definition}[equation]{Definition}
       
       \theoremstyle{remark}
       \newtheorem{Remark}[equation]{Remark}

\makeatletter
\renewcommand{\subsection}{\@startsection{subsection}{2}{0pt}{-3ex
plus -1ex minus -0.2ex}{-2mm plus -0pt minus
-2pt}{\normalfont\bfseries}} \makeatother

\numberwithin{equation}{subsection}
\allowdisplaybreaks

    \newcommand{\Frac}{{\operatorname{Frac}}}

    \newcommand{\iso}{{\;\stackrel{_\sim}{\to}\;}}

    \newcommand{\M}{\mathcal{M}} 

    \newcommand{\Symt}{\mathrm{Sym(}\mathfrak{t}\mathrm{)}}

    \newcommand{\LTd}{\LT^{\ast}}

    \newcommand{\into}{\,\hookrightarrow\,}
\newcommand{\Sym}{\mathrm{Sym}}
\newcommand{\LGd}{\mathfrak{g}^*}

\newcommand{\bc}{{{\mathfrak C}}}

\newcommand{\Spec}{\mathrm{Spec}}

\newcommand{\Maps}{\mathrm{Maps}}
\newcommand{\affineClosureOfCotangentBundleofBasicAffineSpace}{\overline{T^*(G/U)}}

\newcommand{\mmod}{\mathrm{-mod}}
\newcommand{\flatGSpacesOverLGd}{\mathrm{Flat}_{\LGd}^G}
\newcommand{\Tpsir}{T^{\psi}(G/\overline{U})}
\newcommand{\bfN}{\mathbf{N}}
\newcommand{\tgd}{\widetilde{\mathfrak{g}^*}}
\newcommand{\tgdreg}{\widetilde{\mathfrak{g}^*_{\mathrm{reg}}}}
\newcommand{\LGdreg}{\LGd_{\mathrm{reg}}}
\newcommand{\LC}{\mathfrak{c}}
\newcommand{\Emod}{E_{\mathrm{mod}}}
\newcommand{\nilHeckeAlgebra}{\mathsf{H}}
\newcommand{\Tpsirt}{T^{\psi}(G/\overline{U})_{\LTd}}
\newcommand{\PGL}{\mathrm{PGL}}
\newcommand{\sset}{\subseteq }
\newcommand{\nil}{{\mathcal{N}il}}
\newcommand{\nh}{{\mathsf{H}}}
\newcommand{\al}{{\alpha}}
\newcommand{\la}{{\lambda}}
\newcommand{\Hilb}{\mathrm{Hilb}}
\newcommand{\SO}{\mathrm{SO}}
\newcommand{\ResFreeClosureW}{\Res^W_{\circ}}
\title{Coordinate ring of the universal centralizer via Demazure operators}
\author{Tom Gannon and Victor Ginzburg}    

\begin{document}
\maketitle
\begin{flushright} {\em To George Lusztig, with admiration}
\end{flushright}
\begin{abstract}
We give a simple description of the coordinate ring of the universal centralizer associated to a simply connected semisimple group. To this end, we prove a general result on Weil restriction of affine schemes $X$ over the Cartan subalgebra $\mathfrak{t}$ equipped with a compatible action of the Weyl group $W$. Specifically, we show that the coordinate ring of the scheme $\mathrm{Res}^W(X)$ of $W$-fixed points of Weil restriction of $X$ to the categorical quotient $\mathfrak{t}\sslash W$ can be obtained from the coordinate ring of $X$ by applying Demazure operators if and only if the scheme $\mathrm{Res}^W(X)$ is integral. 
\end{abstract}
\section{Main results}
\subsection{}\label{Weil Restriction Subsection}
Let $\LT$ be a finite dimensional vector space over an algebraically closed
field $k$ of characteristic zero, $R\sset\t^*$ a reduced root system,
and $R_+$ a set of positive roots.  Let
${W}$ denote 
the corresponding Weyl group, set  $\mathfrak{c} :=\LTd\sslash W:= \Spec(\O(\LTd)^W)$, and let $q: \LTd \to \LC$ denote the quotient map.

Given an affine scheme $X$ over $\t^*$ there is an affine scheme $\Res X
:= \Res_{q} X$ over $\fc$, the {\em Weil restriction} of $X$ to $\fc$,
equipped with a morphism $c(X): \t^*\times_\fc\Res X \to X$.
We assume that  $X$ is equipped with a $W$-action such that the structure morphism
$X\to \t^*$ is $W$-equivariant.
Then the scheme $\Res X$ acquires a $W$-action along the fibers of the structure morphism
$\Res X \to \fc$. In his work \cite{KnopAutomorphismsRootSystemsandCompactificationsofHomogeneousVarieties}, Knop considered the scheme $\Res^W(X)$ of $W$-fixed points of $X$, which
represents the functor
\begin{align}
  \text{Schemes over $\fc$} \enspace\to\enspace \text{Sets},\quad
  S \mapsto \Hom_{\t^*}^W(\t^*\times_{\fc} S,\, X),\label{functor}
\end{align}
where $\Hom_{\t^*}^W(\t^*\times_{\fc} S,\, X)$ stands for the set of
$W$-equivariant morphisms of schemes over $\t^*$ and the
$W$-action on $\t^*\times_{\fc} S$ is induced by the one on $\t^*$.
Explicitly, this means that for  $S$ as above 
the following composite   is a bijection
  \begin{equation}\label{fun intro}
  \Maps_{\fc}(S, \Res^WX) \xrightarrow{\ \t^*\times_{\fc}(-) \ }
    \Maps_{\t^*}^W(\t^*\times_{\fc} S,\,\Res X) \xrightarrow{\ c(X)\ } 
 \Maps_{\t^*}^W(\t^*\times_{\fc} S,\,X).
  \end{equation}
  
Let $\text{Frac}(\t)$ be the ring of rational functions on $\t^*$,
that is,  the fraction field
of the algebra $\O(\t^*)=\Symt$,
and let $\Delta=\prod_{\alpha\in R_+}\, \check\alpha\,\in\O(\t^*)$
denote the product of positive coroots. We assume the map $X \to \LTd$ is \textit{dominant} or, equivalently, that the map of rings $\Symt \to \mathcal{O}(X)$ is injective, and assume that $\mathcal{O}(X)$ is torsion free as a $\Symt$-module. Given a $W$-module $M$ we write $M^{\mathrm{sign}}$ for the $\mathrm{sign}$-isotypic component of the $W$-action in $M$.
\begin{Definition} For $X$ as above, let
${E}(X)$ be the subalgebra of $\text{Frac}(\t)\otimes_{\O(\t^*)}\O(X)$ generated by $\O(X)=
1\otimes \O(X)$
and all elements of the form
$\frac{f}{\Delta},\enspace f\in \O(X)^{\mathrm{sign}}$,
where $\frac{f}{\Delta}:=\frac{1}{\Delta}\otimes f$. 
\end{Definition}
The algebra ${E}(X)$ is stable under the diagonal $W$-action on $\text{Frac}(\t)\otimes_{\O(\t^*)}\O(X)$; we let ${E}(X)^W$ denote
the $\O(\fc)$-subalgebra of $W$-invariants of $E(X)$. One of our main results identifies the ring of functions on an irreducible component of $\Res^W(X)$ of an integral scheme $X$ with ${E}(X)^W$, as well as gives a criterion for determining if $\Res^W(X) \cong \Spec(E(X)^W)$:

\begin{Theorem}\label{intro thm} Let $X$ be a reduced and irreducible affine variety equipped with a $W$-action and a $W$-equivariant dominant morphism $X\to \t^*$. There is an irreducible component $\Res^W_{\circ} X$ of $\Res^W X$ isomorphic to $\Spec(E(X))^W$. Moreover, 

\vi The scheme $\Res^W(X)$ is irreducible if and only if the coordinate ring of $\Res^W(X)$ is torsion free as a $\Symt^W$-module. 

\vii If $X \to \LTd$ is smooth, then $\Res^W(X)$ is irreducible and smooth over $\LC$.

  \end{Theorem}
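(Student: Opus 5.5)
Let $\fc^\circ\subset\fc$ be the image of the regular locus $\LTd_{\mathrm{reg}}=\{\Delta\neq 0\}$, so that $q:\LTd_{\mathrm{reg}}\to\fc^\circ$ is a Galois $W$-cover. The plan is to work generically over $\fc^\circ$ first and then control what happens over the discriminant divisor.

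\emph{Generic description.} Over $\fc^\circ$, Galois descent gives an equivalence between $W$-equivariant affine schemes over $\LTd_{\mathrm{reg}}$ and affine schemes over $\fc^\circ$, with $Y\mapsto Y/W$. In this setting the functor \eqref{functor} is represented by $X_{\mathrm{reg}}/W$, where $X_{\mathrm{reg}}:=X\times_{\LTd}\LTd_{\mathrm{reg}}$. This yields
\[
\Res^W(X)\times_\fc\fc^\circ\;\cong\;\Spec\bigl(\O(X)[\Delta^{-1}]^W\bigr).
\]
Since $X$ is integral and dominant over $\LTd$, the scheme $X_{\mathrm{reg}}$ is integral and dense in $X$. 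Hence the quotient is integral, and its closure in $\Res^W X$ is an irreducible component; call it $\Res^W_\circ X$. Its coordinate ring is the image $A$ of $\O(\Res^W X)$ in $\O(X)[\Delta^{-1}]^W$.

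\emph{Identifying $A$ with $E(X)^W$.} By \eqref{fun intro}, the universal point gives a $W$-equivariant map $\LTd\times_\fc\Res^W X\to X$, hence a ring map $\O(X)\to \Symt\otimes_{\Symt^W}\O(\Res^W X)$. Every function on $\Res^W X$ arises as a coefficient of this map. Concretely, choose a basis $\{e_i\}$ of $\Symt$ over $\Symt^W$ (free of rank $|W|$ by Chevalley) with dual basis $\{e_i^\vee\}$ for the trace pairing. The coefficients of $f\in\O(X)$ are then $\sum_{w\in W} w(e_i^\vee f)$. The inverse of the discriminant of the trace form is a multiple of $\Delta^{-2}$. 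To match this with $E(X)^W$, I will use Demazure operators: the averaging $\sum_w w(g f)$ can be rewritten as $\partial_{w_0}$ applied to $\Delta$-multiples, and $\partial_{w_0}(h)=\mathrm{Alt}(h)/\Delta$ with $\mathrm{Alt}(h)\in\O(X)^{\mathrm{sign}}$. This shows $A\subseteq E(X)^W$.

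Conversely, for $f\in\O(X)^{\mathrm{sign}}$ the element $f/\Delta$ is $W$-anti-invariant times anti-invariant, hence invariant. Moreover $f/\Delta=\partial_{w_0}(g)$ for some $g$, since anti-invariants in $\Symt\otimes\O(X)$ should be $\Delta$-divisible up to the Demazure image. Together with the generation of $E(X)$ by $\O(X)$ and the $f/\Delta$, this should give $E(X)^W\subseteq A$. Checking that $E(X)^W$ is really generated by such coefficients is the main obstacle. My first approach is to reduce to rank one: localize at the generic point of each reflection hyperplane, where $W$ becomes $\Z/2$. There an explicit computation shows that invariants of $E(X)$ are spanned by $a+s(a)$ and $(a-s(a))/\check\alpha$, which are exactly the coefficients. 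Then glue using normality of $\Symt$ (Hartogs for codimension-two loci).

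\emph{Parts (i) and (ii).} For (i): $\Res^W X$ is irreducible if and only if $\O(\Res^W X)\to A$ is injective (reducedness included). The kernel consists exactly of elements killed by a power of the discriminant, i.e. the $\Symt^W$-torsion, since $\Res^W X$ is already irreducible over $\fc^\circ$ and $\O(\Res^W X)\to A$ is an isomorphism after inverting $\Delta^2$. Torsion supported off the discriminant is impossible because over $\fc^\circ$ the ring is a localization of a domain. For (ii): if $X\to\LTd$ is smooth, then étale-locally on $\fc$ the Weil restriction of a smooth scheme along the finite flat map $q$ is smooth (Weil restriction preserves smoothness). Taking fixed points of the finite group $W$ in characteristic zero preserves smoothness. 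Hence $\Res^W X$ is smooth over $\fc$, in particular flat, so $\O(\Res^W X)$ is torsion free, and (i) gives irreducibility.
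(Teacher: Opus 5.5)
\textbf{Gap: the inclusion $E(X)^W\subseteq A$.} Here $A$ is your image of $\O(\Res^W X)$ in $\O(X)[\Delta^{-1}]^W$. You leave this inclusion as the ``main obstacle'', and the route you sketch does not close it. After proving the inclusion at each height-one prime $\mathfrak p$ of $\Symt^W$, gluing ``by Hartogs'' needs $A=\bigcap_{\mathfrak p}A_{\mathfrak p}$. That is a reflexivity ($S_2$-type) property of $A$ over $\fc$. For an arbitrary integral $X$ nothing forces $A$ (or $\O(X)$, or $E(X)^W$) to be normal, $S_2$, or flat over $\fc$, and normality of $\Symt$ is beside the point.

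In fact no local analysis is needed:
\begin{enumerate}
\item The generators $f/\Delta$ of $E(X)$ are themselves $W$-invariant. Applying $\frac{1}{|W|}\sum_{w}w$ to the $\O(X)$-coefficients of a polynomial in them therefore gives $E(X)^W=\O(X)^W[f/\Delta]_{f\in\O(X)^{\mathrm{sign}}}$. This is how the paper obtains \eqref{W Invariants of Fixed Points}.
\item By your generic description, every $h\in\O(X)$ can be written $h=\sum_i e_i h_i$ in $\O(X)[\Delta^{-1}]$, with $h_i\in A$ the images of its coefficients. Since $D_{w_0}$ is linear over $W$-invariants and $D_{w_0}(\Symt)\subseteq\Symt^W$, you get $D_{w_0}(h)=\sum_i D_{w_0}(e_i)\,h_i\in A$.
\item Taking $h=\Delta a$ with $a\in\O(X)^W$ gives $a=\frac{1}{|W|}D_{w_0}(\Delta a)\in A$. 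Taking $h=f\in\O(X)^{\mathrm{sign}}$ gives $f/\Delta=\frac{1}{|W|}D_{w_0}(f)\in A$. Hence $E(X)^W\subseteq A$.
\end{enumerate}
The missing idea is that $\Symt\otimes_{\Symt^W}A$ contains $\O(X)$ and is stable under Demazure operators. This is exactly the extension property of Lemma \ref{Demazure Envelope Has Adjunction Property for Torsion Free Schemes Algebra Version}. The paper pairs it with the observation that $\Res^W_{\circ}X$ has the same universal property for torsion-free test algebras.

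\textbf{The other direction and the remaining parts.} The rest of your proposal is sound, with one correction in the direction $A\subseteq E(X)^W$. The discriminant of the trace form is $\Delta^{|W|}$ up to a scalar, not $\Delta^2$, and the denominators it controls are too weak anyway. What you need is that the trace-dual basis $\{e_i^\vee\}$ lies in $\Delta^{-1}\Symt$. Equivalently, the different of $\Symt$ over $\Symt^W$ is $\Delta\Symt$, or $(p,q)\mapsto D_{w_0}(pq)$ is a perfect $\Symt^W$-pairing. Then $\sum_w w(e_i^\vee f)=D_{w_0}(\Delta e_i^\vee f)$ with $\Delta e_i^\vee f\in\O(X)$, which has the form $\mathrm{Alt}(h)/\Delta$. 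Also, the coefficients generate $\O(\Res^W X)$ as a $\Symt^W$-algebra rather than exhausting it, but generation is all you use.

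Your proof of (i) identifies the $\Symt^W$-torsion with the $\Delta$-power torsion, hence with the kernel of $\O(\Res^W X)\to A$. This is more direct than the paper's passage through Theorem \ref{Comparison of ResW and Demazure Envelope}. Your proof of (ii) is the paper's argument.

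Once repaired, your explicit comparison of subrings of $\O(X)[\Delta^{-1}]^W$ differs from the paper's route. It avoids needing $E(X)$ to be closed under all Demazure operators and $E(X)\cong\Symt\otimes_{\Symt^W}E(X)^W$. The price is the codifferent computation.
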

 
One can use Theorem \ref{intro thm} to describe $\Res^W_{\circ} X$ as an \textit{affine blow up} of $X\sslash W$ in the sense of, for example, \cite[Definition 2.1]{MayeuxRicharzRomangyNeronBlowupsandLowDegreeCohomologicalApplications}. More precisely, we have: 
 
 \begin{Corollary}\label{intro corollary for res}
 For $X$ as in Theorem \ref{intro thm} there is an isomorphism \begin{equation}\label{Oc algebra iso}\O(\Res^W_{\circ} X) \cong \O(X)^W\left[\frac{j}{\Delta}\right]_{j \in \O(X)^{\mathrm{sign}}}\end{equation} of $\O(\LC)$-algebras identifying the ring of functions on $\Res^W_{\circ}$ with the smallest subring of the field of fractions of $\O(X)^W$ containing $\O(X)^W$ and any element of the form $\frac{j}{\Delta}$ for $j \in \O(X)^{\mathrm{sign}}$. 
 \end{Corollary}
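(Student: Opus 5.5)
By Theorem \ref{intro thm}, $\O(\Res^W_{\circ} X)\cong E(X)^W$. The corollary therefore reduces to an identity of subalgebras of $\Frac(\t)\otimes_{\O(\t^*)}\O(X)$:
\[
E(X)^W \;=\; \O(X)^W\left[\tfrac{j}{\Delta}\right]_{j\in \O(X)^{\mathrm{sign}}} .
\]
I will prove the two inclusions and then locate the right-hand side inside $\Frac(\O(X)^W)$.

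\emph{The inclusion $\supseteq$.} The right-hand side is contained in $E(X)$ by definition. Since $w(\Delta)=\mathrm{sign}(w)\Delta$ and $w(j)=\mathrm{sign}(w)j$ for $j\in\O(X)^{\mathrm{sign}}$, each generator $j/\Delta$ is $W$-invariant. Elements of $\O(X)^W$ are invariant too, so the right-hand side lies in $E(X)^W$.

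\emph{The inclusion $\subseteq$.} This is the main step. Every element of $E(X)$ is a sum of terms
\[
\frac{f\,j_1\cdots j_m}{\Delta^m},\qquad f\in\O(X),\ j_i\in\O(X)^{\mathrm{sign}}.
\]
Taking $N$ even and large, I write such an element as $g/\Delta^N$ with $g\in\O(X)$. Note that $\Delta^2\in\O(\t^*)^W$.

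Let $e=\frac{1}{|W|}\sum_{w\in W} w$ be the averaging idempotent. If $x\in E(X)^W$, then $x=e(x)$. Applying $e$ termwise to $f j_1\cdots j_m/\Delta^m$ gives $e(f\,\mathrm{sign}^m)\, j_1\cdots j_m/\Delta^m$, using that $j_1\cdots j_m/\Delta^m$ is $W$-invariant. Here $e(f\,\mathrm{sign}^m)$ denotes the projection of $f$ onto the $\mathrm{sign}^m$-isotypic component.

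There are two cases.
\begin{itemize}
\item If $m$ is even, this projection lies in $\O(X)^W$. The term is then a product of an element of $\O(X)^W$ with the elements $j_i/\Delta$, so it lies in the right-hand side.
\item If $m$ is odd, the projection $f'$ lies in $\O(X)^{\mathrm{sign}}$. The term becomes $(f'/\Delta)\cdot(j_1/\Delta)\cdots(j_m/\Delta)\cdot \Delta$. This has one factor of $\Delta$ too many, so I regroup it as $(f'/\Delta)\,(j_1/\Delta)\cdots(j_{m-1}/\Delta)\cdot j_m$. The problem is that $j_m$ is not $W$-invariant.
\end{itemize}
The correct regrouping in the odd case is to pair $j_m$ with $f'$: the product $f'j_m$ lies in $\O(X)^W$, since the product of two sign-isotypic elements is invariant. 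The term then equals
\[
f'j_m\prod_{i<m}\frac{j_i}{\Delta}\cdot\frac{1}{\Delta}.
\]
There is still one too many $\Delta$ in the denominator. So I instead write it as
\[
\frac{f'}{\Delta}\cdot\prod_{i=1}^m \frac{j_i}{\Delta}\cdot\Delta^{\,?}.
\]
Counting powers honestly, $f'j_1\cdots j_m/\Delta^m = (f'/\Delta)\prod_{i=1}^{m}(j_i/\Delta)\cdot\Delta$, with $m+1$ sign factors (an even number). This can be rewritten as
\[
\frac{f'}{\Delta}\prod_{i=1}^{m-1}\frac{j_i}{\Delta}\cdot j_m .
\]
To repair the non-invariance, I group $j_m$ with $f'/\Delta$: the factor $j_m\cdot f'/\Delta$ has the right invariance. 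In the expansion $j_m\cdot f'/\Delta=(j_m f')/\Delta$, the numerator $j_mf'$ is $W$-invariant while $\Delta$ is anti-invariant. This means $(j_m f')/\Delta$ is anti-invariant, and each $j_i/\Delta$ is invariant. So the $m$-odd term has total sign parity $\mathrm{sign}$ and must vanish under $e$.

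A cleaner way to organize the parity bookkeeping: decompose $\O(X)=\O(X)^W\oplus\O(X)^{\mathrm{sign}}\oplus(\text{rest})$. The averaged term is $e(f\cdot\prod j_i)/\Delta^m$, and $e(f\cdot\prod j_i)$ is invariant exactly when $m$ is even. Since $\Delta^m$ is $\mathrm{sign}^m$-isotypic, a nonzero invariant quotient forces the numerator to be $\mathrm{sign}^m$-isotypic as well. Hence for $m$ even the numerator $h$ lies in $\O(X)^W$, and the term is $h\cdot(\Delta^2)^{-m/2}$; for $m$ odd, $h\in\O(X)^{\mathrm{sign}}$. The obstacle is then to show that $h/\Delta^m$ lies in the subring generated by $\O(X)^W$ and the elements $j/\Delta$ when $h$ is a general element rather than an explicit product.

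I plan to resolve this using the explicit product form. By $\O(X)^W$-linearity of $e$, and since $\prod_i (j_i/\Delta)$ is invariant, we have $e(f\prod_i j_i)/\Delta^m = e(f)\prod_i (j_i/\Delta)$ when $m$ is even. For $m$ odd, multiply by $\Delta/\Delta$ to get
\[
e\!\left(\frac{f}{1}\right)\prod_i\frac{j_i}{\Delta}
\quad\text{with}\quad
\frac{f}{1}=\Delta\cdot\frac{f}{\Delta}.
\]
Then $e$ of the whole expression is $e_{\mathrm{sign}}(f)\prod_i (j_i/\Delta)$, where $e_{\mathrm{sign}}(f)\in\O(X)^{\mathrm{sign}}$. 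This is not invariant unless it vanishes, so it contributes zero after full averaging. In either case each averaged term lies in the right-hand side.

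\emph{Embedding into $\Frac(\O(X)^W)$.} Finally, I identify the right-hand side with a subring of $\Frac(\O(X)^W)$. Since $\O(X)$ is torsion free over $\Symt$ and $X$ is integral, $\O(X)$ embeds into $\Frac(\t)\otimes\O(X)$. The element $\Delta^2$ is a nonzero element of $\O(X)^W$, and
\[
\frac{j}{\Delta}=\frac{j\Delta}{\Delta^2},\qquad j\Delta\in\O(X)^W.
\]
Hence the right-hand side lies in $\O(X)^W[\Delta^{-2}]\subset\Frac(\O(X)^W)$, and it is the smallest subring of $\Frac(\O(X)^W)$ containing the stated generators.
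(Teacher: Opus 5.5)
Your overall route is the paper's: invoke the main theorem to get $\O(\Res^W_{\circ}X)\cong E(X)^W$, then prove $E(X)^W=\O(X)^W[\frac{j}{\Delta}]_{j\in\O(X)^{\mathrm{sign}}}$ by averaging. The inclusion $\supseteq$ is fine, and so is the embedding into $\Frac(\O(X)^W)$ via $\frac{j}{\Delta}=\frac{j\Delta}{\Delta^2}$.

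\textbf{The gap.} The averaging step, which is the only real content, rests on a false claim. You assert that applying $e$ to $fj_1\cdots j_m/\Delta^m$ projects $f$ onto the $\mathrm{sign}^m$-isotypic component. From this you conclude that for $m$ odd the averaged term is $e_{\mathrm{sign}}(f)\prod_i\frac{j_i}{\Delta}$, hence zero. But, as you note yourself in the $\supseteq$ paragraph, every $\frac{j_i}{\Delta}$ is $W$-invariant. So $W$ acts on $f\prod_i\frac{j_i}{\Delta}$ through $f$ alone, and $e\bigl(f\prod_i\tfrac{j_i}{\Delta}\bigr)=e(f)\prod_i\tfrac{j_i}{\Delta}$ for every $m$, with $e(f)$ the \emph{trivial} projection.

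The sign projection only appears if you average the numerator $g=fj_1\cdots j_m$ on its own. For $m$ odd, $e(g/\Delta^m)=e_{\mathrm{sign}}(g)/\Delta^m$, and $e_{\mathrm{sign}}(fj_1\cdots j_m)=e(f)\,j_1\cdots j_m$, not $e_{\mathrm{sign}}(f)\,j_1\cdots j_m$. So the odd-$m$ terms do not vanish. For instance, $\Delta\in\O(X)^{\mathrm{sign}}$, and $1=\frac{\Delta}{\Delta}$ is a term with $f=1$, $m=1$; your rule would give $1=e(1)=e_{\mathrm{sign}}(1)=0$. The same confusion underlies your remark that $e(f\prod_i j_i)$ ``is invariant exactly when $m$ is even''; the output of $e$ is always invariant.

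\textbf{The fix.} The repair is one line, and it makes both the parity case split and the ``obstacle'' about a general invariant numerator $h$ unnecessary. Write $x\in E(X)^W$ as $\sum f\prod_i\frac{j_i}{\Delta}$. Then $x=e(x)=\sum e(f)\prod_i\frac{j_i}{\Delta}$, which lies in $\O(X)^W[\frac{j}{\Delta}]_{j\in\O(X)^{\mathrm{sign}}}$ because $e(f)\in\O(X)^W$. This is exactly the paper's argument, phrased there as splitting each coefficient $b_{i,j}\in A$ into its invariant part and a part in nontrivial isotypic components, the latter having to cancel in a $W$-invariant sum.

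With this correction your proof coincides with the paper's. The only difference is that the paper also spells out the identification of the Demazure envelope with $A[\frac{a}{\Delta}]_{a\in A^{\mathrm{sign}}}$, via Proposition~\ref{env prop}(i) and the explicit formula for $D_{w_0}$. You may legitimately bypass that step, since you quote Theorem~\ref{intro thm} with $E(X)$ as defined in the introduction.
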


It is also possible to describe the base change $\LTd \times_{\LC} \Res^W_{\circ} X$ of $\Res^W_{\circ}(X)$ as an affine blow up of $X$, see Corollary \ref{intro corollary for res expanded}.

\begin{Remark}
Assume $X = \Spec(A)$ is any reduced and irreducible affine $W$-scheme equipped with a $W$-equivariant dominant map $\pi: X \to \LTd$. In \cite{BielawskiFoscoloHypertoricVarietiesWHilbertSchemesandCoulombBranches}, the authors construct an integral subscheme $\Hilb_{\pi}^W(X)$ of $\Res^W(X)$, defined as the closure of the subscheme of free orbits in $\Res^W(X)$. Indeed, by the definition of $\Hilb_{\pi}^W(X)$ and the construction of $\ResFreeClosureW(X)$ in Section \ref{Construction of Component}, both schemes can be identified with the same irreducible component of $\Res^W(X)$, equipped with the reduced scheme structure. 
\end{Remark}

\begin{Remark}
One important example of $\Res^W(X)$, which was first considered by Knop in \cite{KnopAutomorphismsRootSystemsandCompactificationsofHomogeneousVarieties}, is given by taking $X := \mathbb{T} \times \LTd$ for some irreducible algebraic group $\mathbb{T}$ equipped with an action of $W$ compatible with the group structure on $\mathbb{T}$, such as $T$ or $\LT$. For such $X$, one can show that, using Theorem \ref{intro thm}(ii), that $\Res(X)$ is irreducible. Moreover, using the universal property of $\Res^W(X)$, one can naturally equip $\Res^W(X)$ with the structure of a group scheme over $\LTd\sslash W$. For these $X$, Corollary \ref{intro cor} thus gives a description for the ring of functions on the group scheme $\Res^W(X)$.
\end{Remark}

\subsection{The universal centralizer}
To any reductive group $G$ with Lie algebra $\LG$, there is a canonical abelian group scheme $J$ over $\LG\sslash G$ which is associated to $G$ and is known as the \textit{group scheme of universal centralizers} \cite{LusztigCoxeterOrbitsandEigenspacesofFrobenius}, \cite{KnopAutomorphismsRootSystemsandCompactificationsofHomogeneousVarieties}, \cite{Ngo}, \cite{DonagiGaitsgoryTheGerbeofHiggsBundles}. This group scheme can be described explicitly as the centralizer of a Kostant section. 

An important theorem of Knop states that the group scheme $J$ can be described in terms of the Weil restriction construction of Section \ref{Weil Restriction Subsection}. More precisely, it follows from \cite[Theorem 7.8]{KnopAutomorphismsRootSystemsandCompactificationsofHomogeneousVarieties} that there is an isomorphism of group schemes 
\begin{equation}\label{Isomorphism of Ngo} J \xrightarrow{\sim} \Res^W(T^*T)\end{equation} 
provided that $G$ does not contain $\mathrm{SO}_{2n + 1}$ as a factor for some $n$, see also \cite[Proposition 2.4.7]{Ngo}, \cite[Theorem 11.6]{DonagiGaitsgoryTheGerbeofHiggsBundles}. Thus, for such $G$, the group scheme $J$ can be described entirely in terms of the action of the Weyl group on a maximal torus of $G$.

Using the isomorphism \labelcref{Isomorphism of Ngo} and our above results, we can give an explicit description for the ring of functions on $J$. To this end, we first observe that, since the projection map $T^*T \to \LTd$ is smooth, Theorem \ref{intro thm}(ii) gives $\Res^W(T^*T) = \Res^W_{\circ}(T^*T)$. Therefore, Corollary \ref{intro corollary for res} implies:

\begin{Corollary}\label{intro cor}
If $G$ does not contain $\mathrm{SO}_{2n + 1}$ as a direct factor, there is an isomorphism \begin{equation}\label{GG Description for OJ}\O(J) \cong \O(T^*T)^W[\frac{i}{\Delta}]_{i \in \O(T^*T)^{\mathrm{sign}}}\end{equation}  of $\O(\LC)$-algebras.
\end{Corollary}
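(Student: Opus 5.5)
The plan is to chain together three results already available: Knop's isomorphism \labelcref{Isomorphism of Ngo}, Theorem \ref{intro thm}(ii) and Corollary \ref{intro corollary for res}, applied to $X := T^*T$.

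First we check that $X$ meets the hypotheses of Theorem \ref{intro thm}. Trivialize the cotangent bundle by left translation, so that $T^*T \cong T \times \LTd$. The group $W$ acts diagonally on this product, and the structure map is the projection $\pi: T\times \LTd \to \LTd$, which is $W$-equivariant. Since $T$ is a connected torus, $X$ is reduced and irreducible. The projection is surjective, so $\pi$ is dominant. Moreover $\O(X) = \O(T)\otimes_k \Symt$ is a free $\Symt$-module, hence torsion free.

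Next, $\pi$ is smooth, because it is the base change of the smooth morphism $T \to \Spec k$. Theorem \ref{intro thm}(ii) then says that $\Res^W(T^*T)$ is irreducible and smooth over $\LC$. In particular it is reduced, so it coincides with its unique irreducible component with the reduced structure. That component is $\Res^W_{\circ}(T^*T)$. Corollary \ref{intro corollary for res} now gives an isomorphism of $\O(\LC)$-algebras
\[
\O(\Res^W(T^*T)) \cong \O(T^*T)^W\left[\frac{i}{\Delta}\right]_{i\in \O(T^*T)^{\mathrm{sign}}}.
\]

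Finally, when $G$ has no factor $\SO_{2n+1}$, Knop's theorem \labelcref{Isomorphism of Ngo} provides an isomorphism $J \iso \Res^W(T^*T)$. This is an isomorphism of group schemes over $\LC$, after the standard identification $\LG\sslash G \cong \LTd\sslash W$. Pulling back along it gives \eqref{GG Description for OJ} as an isomorphism of $\O(\LC)$-algebras.

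The one step needing care is the identification $\Res^W = \Res^W_{\circ}$ as schemes, not just as topological spaces. Irreducibility alone does not suffice, because the component $\Res^W_{\circ}$ carries the reduced structure. Smoothness over $\LC$ closes this gap: $\LC$ is an affine space, so $\Res^W(T^*T)$ is smooth over $k$ and hence reduced. A smaller point is compatibility of base identifications. The $\O(\LC)$-structure on $\O(J)$ must agree with the one coming through \labelcref{Isomorphism of Ngo}. This holds because Knop's isomorphism is over $\LG\sslash G \cong \LTd\sslash W$ by the Chevalley restriction theorem, which identifies $\LG^*$ with $\LG$ invariantly.
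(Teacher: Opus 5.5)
Your proposal is correct and follows the paper's own argument: use smoothness of $T^*T \to \LTd$ and Theorem \ref{intro thm}(ii) to get $\Res^W(T^*T)=\Res^W_{\circ}(T^*T)$, then combine Corollary \ref{intro corollary for res} with Knop's isomorphism \labelcref{Isomorphism of Ngo}. You also check that smoothness over $\LC$ makes $\Res^W(T^*T)$ reduced, so the equality with the reduced component holds scheme-theoretically and not just topologically; the paper leaves this implicit, and your treatment of it is right.
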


We view Corollary \ref{intro cor} as a strengthening of \cite[Proposition 2.8(b)]{BezrukavnikovFinkelbergMirkovicEquivariantKHomologyofAffineGrassmannianandTodaLatticeNEW2026VERSION} in two ways: \begin{enumerate}
    \item We describe $J$ as an affine blow up of the scheme $T^*T\sslash W$ itself, rather than as the categorical quotient of an affine blow up of $T^*T$.
    \item The ideal $I_{\cap}$ of $\O(T^*T)$ used in \cite{BezrukavnikovFinkelbergMirkovicEquivariantKHomologyofAffineGrassmannianandTodaLatticeNEW2026VERSION} (and defined below) is likely to be larger than the ideal $I_{\mathrm{sign}}$ generated by $\O(T^*T)^{\mathrm{sign}}$.
\end{enumerate}We now explain these points in more detail. Recall that \cite[Proposition 2.8(b)]{BezrukavnikovFinkelbergMirkovicEquivariantKHomologyofAffineGrassmannianandTodaLatticeNEW2026VERSION} gives an $\O(\LC)$-algebra isomorphism \begin{equation}\O(J) \cong (\O(T^*T)[\frac{i}{\Delta}]_{i \in I_{\cap}})^W\end{equation} where $I_{\cap} := \cap_{\alpha}(\alpha - 1, \alpha^{\vee})$,\footnote{Here, we lightly abuse notation and view $\alpha$ as a function on $T \times \mathfrak{t}^*$ via projection onto the first factor and view $\alpha^{\vee}$ as a function on $\mathfrak{t}^*$ via projection onto the second factor.} see also \cite[Theorem 3.21]{GorskyKivinenOblomkovTheAffineSpringerFiberSheafCorrespondence}. 
On the other hand, using Corollary \ref{intro cor}, it is not difficult to show that there is an isomorphism of $\O(\LC)$-algebras \begin{equation}
\O(J) \cong (\O(T^*T)[\frac{i}{\Delta}]_{i \in I_{\mathrm{sign}}})^W
\end{equation} if $G$ does not contain $\SO_{2n + 1}$ as a direct factor; for example, this follows from the above discussion by taking the $W$-invariants of the isomorphism \labelcref{Otstar algebra iso} from Corollary \ref{intro corollary for res expanded} below.

An elementary argument gives that the ideal $I_{\mathrm{sign}}$  is contained in $I_{\cap}$,\footnote{Indeed, since $I_{\mathrm{sign}}$ is an ideal, it suffices to prove that $\O(T^*(T))^{\mathrm{sign}} \subseteq I'$. Fix a root $\alpha$ and some $p_0 \in \O(T^*(T))^{\mathrm{sign}}$. We may write $p_0 = p - s_{\alpha}(p)$ for some $p \in \O(T^*(T))$. We may further write $p$ as a sum of terms of the form $p_1p_2$ where $p_1 \in \O(T)$ and $p_2 \in \O(\LTd)$. Since \[p_1p_2 - s(p_1p_2) = p_1(p_2 - s_{\alpha}(p_2)) + s_{\alpha}(p_2)(p_1 - s_{\alpha}(p_1))\] and $(p_2 - s_{\alpha}(p_2)) \in \Symt$ is divisible by $d\alpha_1$ and 
$(p_1 - s_{\alpha}(p_1)) \in \O(T)$ is divisible by $1 - \alpha^{-1} = \alpha(\alpha - 1)$, we deduce that $p_0 \in I_{\alpha}$ for all $\alpha$. Therefore $p_0$ lies in the intersection $I_{\cap}$.} but these ideals are \textit{not} expected to be equal for general reductive (or general simply connected) $G$; for example, the \lq rational\rq{} analogues of these ideals do not agree when $G$ has type $B_3$, see \cite{KivinenALieTheoreticGeneralizationOfSomeHilbertSchemes}. Moreover, the only known proof in type $A$ that $I_{\mathrm{sign}} = I_{\cap}$ follows from deep work of Haiman \cite{HaimanHilbertSchemesPolygraphsandtheMacdonaldPositivityConjecture}. 
Therefore, the generating set in \labelcref{GG Description for OJ} is likely smaller than that of \cite[Proposition 2.8(b)]{BezrukavnikovFinkelbergMirkovicEquivariantKHomologyofAffineGrassmannianandTodaLatticeNEW2026VERSION} for $G$ which do not contain $\SO_{2n + 1}$ as a direct factor. (On the other hand, the description of Corollary \ref{intro cor} does not hold when $G = \PGL_2 = \SO_3$, whereas the description of \cite[Proposition 2.8(b)]{BezrukavnikovFinkelbergMirkovicEquivariantKHomologyofAffineGrassmannianandTodaLatticeNEW2026VERSION} applies to arbitrary reductive groups.)

\begin{Remark}
If $G$ is an arbitrary semisimple group, then the group scheme $J$ of universal centralizers for $G$ can be described as the categorical quotient $\tilde{J}\sslash Z$, where $\tilde{J}$ is the group scheme of universal centralizers for the simply connected cover $\tilde{G}$ of $G$, and $Z$ is the kernel of the map $\tilde{G} \to G$. The isomorphism \labelcref{GG Description for OJ} for the group $\tilde{G}$ can be checked to be $Z$-equivariant, so Corollary \ref{intro cor} gives an $\O(\LC)$-algebra isomorphism \begin{equation}\O(J) \cong (\O(T^*\widetilde{T})^W[\frac{i}{\Delta}]_{i \in \O(T^*\widetilde{T})^{\mathrm{sign}}})^Z\end{equation} for an arbitrary semisimple $G$, where $\widetilde{T}$ is the preimage of $T$ under the quotient map $\tilde{G} \to G$. More generally, using the group scheme $\widetilde{G_{\mathrm{ss}}} \times Z(G)$, one can use Corollary \ref{intro cor} to describe the ring of functions on $J$ in terms of the maximal torus of $\widetilde{G_{\mathrm{ss}}} \times Z(G)$ for an arbitrary reductive group $G$.
\end{Remark}

\subsection{The Nil-Hecke Algebra}
Following Demazure \cite{DemazureInvariantsSymmetriquesEntiersDesGroupsDeWeylEtTorsion}, Bernstein-Gelfald-Gelfand \cite{BernsteinGelfandGelfandSchubertCellsandCohomologyofGModP}, and Kostant-Kumar \cite{KostantKumarTheNilHeckeRingandtheCohomologyofGModPforaKacMoodyGroupG}, one
defines a $k$-algebra $= \nil({W})$ as a vector space with $k$-basis
$D_w,\, w\in{W}$,  and multiplication table:
\[
  D_y  D_w=\begin{cases} D_{yw} & \text{ if }  \ell(yw)=\ell(y)+\ell(w);\\
                0 & \text{ if }  \ell(yw) <\ell(y)+\ell(w).
               \end{cases} \label{rel1}
             \]


             Kostant and Kumar defined the  {\em nil-Hecke algebra} $\nh$
which is isomorphic
             to $\Symt \otimes \nil({W})$ as a vector space and such that
             $\Symt \otimes 1$ and
             $1\otimes \nil({W})$ are subalgebras of  $\nh$. Multiplication in
             $\nh$  is determined
             by the commutation relation:
\beq{com-rel}
D_{s_\al}\cdot s_\al(\la)-\la\cdot D_{s_\al}=\langle\la,\check\al\rangle,
 \quad \forall \la\in\t^*,\, \al\in\Sigma,
 \eeq
 where $\Sigma\sset R_+$ is the set of simple roots.
(cf. also
             \cite{BernsteinGelfandGelfandSchubertCellsandCohomologyofGModP}, \cite{DemazureInvariantsSymmetriquesEntiersDesGroupsDeWeylEtTorsion}, \cite{HolmSjamaarTorsionadnAbelianizationInEquivariantCohomology}.)
             Let $M$ be a $W\ltimes {\Symt}$-module which is torsion
free as an ${\Symt}$-module.
It is known (see for example Proposition \ref{Forgetful Functor form nilHeckeAlgebra to S rtimes W mod fully faithful}) that the $W\ltimes {\Symt}$-action on $M$ has a unique extension
to an $\nh$-action on $\text{Frac}(\t)\otimes_{\Symt} M$ such that for all {\em simple
  reflections} $s_\alpha\in W$ the element ${D}_{s_\al}$
acts as the operator $\text{Frac}(\t)\otimes_{\Symt} M\to \text{Frac}(\t)\otimes_{\Symt} M,\
u\mapsto \frac{1}{{\check\al}}(u-s_\al(u))$.
For a general $w \in W$ the  $D_w$-action is
given by the operator $D_w := D_{s_1}\cdots D_{s_\ell}$,
which is a $\Symt^W$-linear map called {\em Demazure operators}.
Here $w = s_1\cdots s_{\ell}$ is a reduced expression; by \cite[Theorem 1]{DemazureInvariantsSymmetriquesEntiersDesGroupsDeWeylEtTorsion} $D_w$ is independent of this choice of reduced expression.

Let $w_0\in{W}$ be the longest element.
The corresponding Demazure operator equals the following operator
\begin{equation}\label{Explicit Formula for Dw0}
  D_{w_0}:\ u\mapsto \frac{1}{\Delta}\sum_{w\in{W}} (-1)^{\ell(w)} w(u),
 \end{equation} which is proved in \cite[Proposition 3(b)]{DemazureInvariantsSymmetriquesEntiersDesGroupsDeWeylEtTorsion}.

\begin{Definition}\label{Demazure envelope of algebra definition}
Assume $A$ is a commutative $\Symt$-algebra equipped with a $W$-action for which the algebra map $\Symt \to A$ is injective and $W$-equivariant. Assume further that $A$ is torsion free as a $\Symt$-module. The {\em  Demazure envelope} of $A$ is the smallest subring $E(A)$ of $\Frac(\t)\otimes_{\Symt} A$   which contains $A$ and is closed under the Demazure operators.
        \end{Definition}

If $A$ in Definition \ref{Demazure envelope of algebra definition} is moreover an integral domain, we can (and will) view $E(A)$ as a subring of the field of fractions of $A$. 

We will prove the following Proposition on the Demazure envelope; the first part is proved in Section \ref{ssec env} and the second part is restated and proved as Proposition \ref{Demazure Envelope Has Adjunction Property for Torsion Free Schemes Algebra Version}. 

    \begin{Proposition}\label{env prop} For $A$ as in Definition \ref{Demazure envelope of algebra definition}, we have:
    
   \vi    The algebra $E(A)$ equals the
      subalgebra of $\Frac(\t)\otimes_{\Symt} A$
      generated by $A$ and the elements $D_{w_0}(a),\, a\in A$.

      \vii Let $B$ be a commutative torsion free $k$-algebra.
            Then, any $W$-equivariant  $\Symt$-algebra homomorphism
      $A\to \Symt\otimes_{\Symt^W}B$ has a unique extension
      to  a $W$-equivariant  $\Symt$-algebra homomorphism
      $E(A)\to  \Symt\otimes_{\Symt^W}B$.
    \end{Proposition}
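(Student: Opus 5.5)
For part (i), let $E'$ denote the subalgebra of $\Frac(\t)\otimes_{\Symt} A$ generated by $A$ and the elements $D_{w_0}(a)$, $a\in A$. Since $D_{w_0}$ is a composite of the $D_{s_\alpha}$, we have $E'\subseteq E(A)$. For the reverse inclusion we will show that $E'$ is closed under each $D_{s_\alpha}$ with $\alpha$ simple. Then every $D_w=D_{s_1}\cdots D_{s_\ell}$ preserves $E'$, and minimality gives $E(A)\subseteq E'$.

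First, $E'$ is $W$-stable. Indeed $w(D_{w_0}(a))=(-1)^{\ell(w)}D_{w_0}(a)$ by \eqref{Explicit Formula for Dw0}, since $\Delta$ transforms by the sign character and the antisymmetrizer does as well. Second, the operator $D_{s_\alpha}$ satisfies the twisted Leibniz rule
\[
D_{s_\alpha}(uv)=D_{s_\alpha}(u)\,v+s_\alpha(u)\,D_{s_\alpha}(v).
\]
Using this rule and the $W$-stability of $E'$, it suffices to check $D_{s_\alpha}$ on the generators. On $a\in A$ we have $D_{s_\alpha}(a)=D_{s_\alpha}D_{w_0}D_{w_0^{-1}}\cdots$, which is not directly usable, so instead we use the standard fact that $\nh$ is generated as a left $\Symt$-module by the $D_w$, with $D_{s_\alpha}$ expressible through $D_{w_0}$. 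Concretely, there is an identity in $\nh$ of the form $D_{s_\alpha}=\sum_w p_w\, D_{w_0}\, q_w$ with $p_w,q_w\in\Symt$. One source is the fact that $\Symt$ is a free $\Symt^W$-module, which yields a dual basis $\{p_w\},\{q_w\}$ with $\sum_w p_w D_{w_0}(q_w\,\cdot)=\mathrm{id}$ on $\Symt$. Composing this identity with $D_{s_\alpha}$ and using $D_{s_\alpha}D_{w_0}=0$ together with the commutation relation \eqref{com-rel} writes $D_{s_\alpha}(a)$ as $\sum_w p'_w D_{w_0}(q'_w a)$, which lies in $E'$. On the generators $D_{w_0}(a)$, which are $s_\alpha$-anti-invariant, $D_{s_\alpha}$ acts by $D_{s_\alpha}(D_{w_0}a)=2D_{w_0}(a)/\check\alpha$. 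This element must be shown to lie in $E'$, and the cleanest route is again the identity: $D_{s_\alpha}D_{w_0}=0$ in $\nh$, with the anti-invariance sign conventions adjusted. We expect this step, namely producing the explicit operator identity expressing $D_{s_\alpha}$ (and $\frac1{\check\alpha}$ times anti-invariants) through $D_{w_0}$ with polynomial coefficients, to be the main obstacle. The first thing to try is the Kostant--Kumar identity $\mathrm{id}=\sum_w \sigma_w D_{w_0}\tau_w$ with $\sigma_w,\tau_w\in\Symt$, which holds because $\nh\cong\End_{\Symt^W}(\Symt)$ and $D_{w_0}$ generates this matrix algebra as a two-sided ideal. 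Given that identity, every element $D_w\cdot x$ equals $\sum \sigma D_{w_0}(\tau\,\cdot)$ applied to $x$, and closure then follows from the Leibniz rule, because $D_{w_0}$ applied to a product in $E'$ lands in $E'$ after expanding.

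For part (ii), uniqueness is immediate. The target $\Symt\otimes_{\Symt^W}B$ is $\Symt$-torsion free, being free over $B$ because $\Symt$ is free over $\Symt^W$ (Chevalley). Hence any extension agrees with the base change to $\Frac(\t)$ of the given map $\phi$, which is determined by its values on $A$. For existence, extend $\phi$ to $\Phi:\Frac(\t)\otimes_{\Symt}A\to\Frac(\t)\otimes_{\Symt^W}B$; this map is $W$-equivariant and therefore commutes with all $D_w$. By part (i) it suffices to show $\Phi(D_{w_0}a)=D_{w_0}(\phi(a))\in\Symt\otimes_{\Symt^W}B$. This holds because on $\Symt\otimes_{\Symt^W}B$ the operator $D_{w_0}$ is the base change of $D_{w_0}$ on $\Symt$, which maps $\Symt$ into $\Symt^W$. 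Hence $\Phi(E(A))\subseteq \Symt\otimes_{\Symt^W}B$, and $\Phi|_{E(A)}$ is the desired extension.
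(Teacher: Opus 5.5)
Your strategy is sound and differs from the paper's route for (i). However, your treatment of the generators $D_{w_0}(a)$ contains an error that you need to fix.

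\textbf{The error in (i).} The element $D_{w_0}(a)=\frac{1}{\Delta}\sum_{w}(-1)^{\ell(w)}w(a)$ is a quotient of two sign-isotypic elements. It is therefore $W$-\emph{invariant}, not anti-invariant; this also follows from $D_{s}D_{w_0}=0$, cf.\ Proposition~\ref{standard w0}(1). Hence $D_{s_\alpha}(D_{w_0}(a))=0$, not $2D_{w_0}(a)/\check\alpha$. Your formula in fact contradicts the relation $D_{s_\alpha}D_{w_0}=0$ that you invoke. The ``main obstacle'' you anticipate, handling $\frac{1}{\check\alpha}$ times anti-invariants, never arises. $W$-stability of $E'$ still holds, since $A$ is $W$-stable and the other generators are fixed. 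You should also drop your last sentence about $D_{w_0}$ applied to products in $E'$: as stated it is circular, and it is not needed, because your Leibniz reduction only requires the operator identity on elements $a\in A$.

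\textbf{The key identity.} After this correction, the only substantive input is the identity $1=\sum_i\sigma_iD_{w_0}\tau_i$ in $\nh$ with $\sigma_i,\tau_i\in\Symt$. Freeness of $\Symt$ over $\Symt^W$ alone does not give it; you would also need perfectness of the pairing $(f,g)\mapsto D_{w_0}(fg)$. Your second justification does work once completed:
\begin{itemize}
\item Since $D_{w_0}(\Delta)=|W|$, the map $D_{w_0}\colon\Symt\to\Symt^W$ is onto. So $D_{w_0}$ generates $\nh\cong\End_{\Symt^W}(\Symt)\cong M_{|W|}(\Symt^W)$ as a two-sided ideal, and $1=\sum_i x_iD_{w_0}y_i$ with $x_i,y_i\in\nh$.
\item Write $x_i=\sum_v p_{i,v}D_v$ and $y_i=\sum_v D_vq_{i,v}$ (Proposition~\ref{standard w0}(2)). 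Using $D_vD_{w_0}=0=D_{w_0}D_v$ for $v\neq e$, you get $1=\sum_ip_{i,e}D_{w_0}q_{i,e}$.
\item Then $D_s=\sum_i D_s(\sigma_i)D_{w_0}\tau_i$, as you say, so $D_s(a)\in\Symt\cdot D_{w_0}(A)\subseteq E'$.
\end{itemize}

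\textbf{Comparison with the paper.} For (i), the paper first shows that $E(A)$ is generated by all $D_w(a)$. It then obtains $\Emod(A)=\Symt\cdot D_{w_0}(A)$ from Corollary~\ref{Trivial Part of EmodM is Dw0 of M} together with the cited Lemma~\ref{Closed Under Demazure Implies Recoverable from GIT Quotient}. Your operator identity is an explicit, self-contained form of that lemma for $\nh$-modules, since it gives $m=\sum_i\sigma_iD_{w_0}(\tau_im)\in\Symt\cdot M^W$. So you replace an external citation with a short computation in $\nh$ and skip the intermediate algebra.

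For existence in (ii), you extend to $\Frac(\t)$ and note that $\Phi$ commutes with $D_{w_0}$. On $\Frac(\t)\otimes_{\Symt^W}B$, $D_{w_0}$ acts as $D_{w_0}\otimes\mathrm{id}_B$, so it maps $\Symt\otimes_{\Symt^W}B$ into $\Symt^W\otimes_{\Symt^W}B$. This is cleaner than the paper's construction of $\tilde h$ and its well-definedness check, at the cost of relying on (i).

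For uniqueness, you use torsion-freeness, which is legitimate here. To get $\Symt$-torsion-freeness from freeness over $B$, also use that $B$ is $\Symt^W$-torsion free together with the norm $\prod_w w(p)\in\Symt^W$. The paper's uniqueness argument, via Proposition~\ref{Forgetful Functor form nilHeckeAlgebra to S rtimes W mod fully faithful}, holds for arbitrary $B$.
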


\subsection{Whittaker Cotangent Bundle as Weil Restriction}\label{Whittaker Cotangent Bundle as Weil Restriction Subsection} In Section \ref{Whittaker Cotangent Bundle as Weil Restriction Subsection}, we assume that the derived subgroup of $G$ is simply connected. We also choose a Borel subgroup $B$ and a maximal torus $T$ contained in $B$, let $U := [B, B]$ and let $\overline{U} := [\overline{B}, \overline{B}]$ where $\overline{B}$ is the Borel subgroup containing $T$ in opposite position to $B$. 
We also choose a nondegenerate character $\psi: \overline{\mathfrak{u}} := \mathrm{Lie}(\overline{U}) \to \G_a$. Finally, for any scheme $S$ over $\LC$, we denote $S_{\LTd} := S \times_{\LC} \LTd$, which we view as a $W$-scheme where $W$ acts on the second factor.

We define the \textit{Whittaker cotangent bundle} of $G$ to be the variety \[\Tpsir := (T^*G \times_{\overline{\mathfrak{u}}^*} \{\psi\})/\overline{U}\] over $\LC$. Using the theory of Kostant sections, one can show (see \cite[Lemma 3.2.3]{GinzburgKazhdanDifferentialOperatorsOnBasicAffineSpaceandtheGelfandGraevAction}) that there is a $G$-equivariant isomorphism $\Tpsir \cong G \times \LC$ over $\LC$. One of the main constructions of \cite{GinzburgKazhdanDifferentialOperatorsOnBasicAffineSpaceandtheGelfandGraevAction} (see also \cite{GannonTheCotangentBundleofGModUPandKostantWhittakerDescent}) gives a birational map \begin{equation}\label{Birational Map For Cotangent Bundles}\varpi: \Tpsirt \to \affineClosureOfCotangentBundleofBasicAffineSpace\end{equation} where $\affineClosureOfCotangentBundleofBasicAffineSpace$ is the affinization of the cotangent bundle of the base affine space $G/U$. Moreover, this map is $W$-equivariant, where $W$ acts on $\Tpsirt$ via its action on the second factor and acts on $\affineClosureOfCotangentBundleofBasicAffineSpace$ via the (quasi-classical) \textit{Gelfand-Graev action}, reviewed in for example \cite{GinzburgRicheDifferentialOperatorsOnBasicAffineSpaceandtheAffineGrassmannian}. 

Based on the Coulomb branch descriptions of $\affineClosureOfCotangentBundleofBasicAffineSpace$ and $\Tpsir$ when $G$ is of type $A$ \cite{GannonWilliamsDifferentialOperatorsOnBaseAffineSpaceofSLnandQuantizedCoulombBranches}, \cite{GannonWebsterFunctorialityOfCoulombBranches}, \cite{BravermanFinkelbergNakajimaRingObjectsIntheEquivariantDerivedSatakeCategoryArisingFromCoulombBranches} as well as other computations we have performed when $G$ is of type $A$, we propose the following Conjecture: 

\begin{Conjecture}\label{Our conjecture}
If $[G, G]$ is simply connected, then there is an isomorphism \[\Res^W_{\circ}(\affineClosureOfCotangentBundleofBasicAffineSpace) \cong \Tpsir\] of schemes over $\LC$. Equivalently, the Demazure envelope of the ring of functions on $\affineClosureOfCotangentBundleofBasicAffineSpace$ is isomorphic to the ring of functions on $\Tpsirt \cong G \times \LTd$ as schemes over $\LTd$.\footnote{The scheme $\Res^W(\affineClosureOfCotangentBundleofBasicAffineSpace) $ may in fact be irreducible, but we do not have enough evidence to determine this.}
\end{Conjecture}

In Section \ref{Weil Restriction and Cotangent Bundle of G mod U Section}, we give some evidence for Conjecture \ref{Our conjecture}. More specifically, we will prove:

\begin{Theorem}\label{Representability for Flat G Spaces Over LGd}
If $[G, G]$ is simply connected, the map \ref{Birational Map For Cotangent Bundles} induces an isomorphism \[\Maps_{\LGd}^G(S, \Tpsir) \xrightarrow{\sim}  \Maps_{\LGd \times_{\mathfrak{c}} \LTd}^{G \times W}(S_{\LTd}, \affineClosureOfCotangentBundleofBasicAffineSpace)\] for any $G$-scheme $S$ equipped with a compatible \textit{flat} map to $\LGd$.
\end{Theorem}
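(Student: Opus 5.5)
The plan is to reduce the statement to the adjunction property of the Demazure envelope, Proposition \ref{env prop}(ii), together with Kostant--Whittaker descent. Set $A := \O(\affineClosureOfCotangentBundleofBasicAffineSpace)$. The map $\varpi$ of \labelcref{Birational Map For Cotangent Bundles} is birational and $W$-equivariant, so it gives an injective $W$-equivariant map $\varpi^*: A \into \O(\Tpsirt)$. Since $\Tpsir \cong G \times \LC$, we have $\O(\Tpsirt) \cong \O(G)\otimes \Symt$. This is torsion free over $\Symt$, so $A$ is torsion free as well.

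\textbf{Step 1: maps out of $\Tpsir$.} Fix $S$ flat over $\LGd$. I first show that $G$-maps $S \to \Tpsir$ over $\LGd$ are the same as $G$-maps $S_{\LTd}\to \Tpsirt$ over $\LGd\times_\fc\LTd$ that are $W$-equivariant. This should hold because $S_{\LTd}\to S$ is finite flat with $W$-invariants $\O(S)$; here flatness of $S$ over $\LGd$, hence over $\LC$, is used. By $G$-equivariance and the isomorphism $\Tpsir\cong G\times\LC$, maps to $\Tpsir$ over $\LGd$ amount to maps $S\to G$ with a compatibility condition, and the same description holds after base change to $\LTd$. Composing with $\varpi$ then gives the map in the statement.

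\textbf{Step 2: injectivity.} Since $\varpi^*$ is injective and birational, the composite $A\to \O(G)\otimes\Symt \to \O(S_{\LTd})$ determines the map $\O(G)\otimes \Symt\to\O(S_{\LTd})$ on a dense open set. Because $\O(S_{\LTd})$ is torsion free over $\Symt$ (by flatness of $S$), it determines the map entirely.

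\textbf{Step 3: surjectivity.} Given a $G\times W$-equivariant map $A\to \O(S_{\LTd}) = \Symt\otimes_{\Symt^W}\O(S)$, apply Proposition \ref{env prop}(ii) with $B=\O(S)$, which is torsion free because $S$ is flat over $\LGd$. This extends the map uniquely to a $W$-equivariant map $E(A)\to \O(S_{\LTd})$. Uniqueness forces the extension to be $G$-equivariant. It then remains to show that $\O(\Tpsirt)\subseteq E(A)$ inside $\Frac(A)$, which is the inclusion direction of Conjecture \ref{Our conjecture}.

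\textbf{Main obstacle.} The hardest part is proving $\O(G)\otimes\Symt \subseteq E(A)$ without the full conjecture. I would first try the following. Since $E(A)$ is a $G$-stable algebra, it suffices to show that $\O(G)\otimes 1$ lies in $E(A)$. For this I would use the Gelfand--Graev action: for $v$ a highest-weight vector of $V_\lambda$, the matrix coefficients $\langle g v, \xi\rangle$ are functions on $G/U\subset \affineClosureOfCotangentBundleofBasicAffineSpace$. Applying $D_{w_0}$ to these, in the form \labelcref{Explicit Formula for Dw0}, should produce the matrix coefficients for the lowest-weight vectors. Together with the simply connectedness of $[G,G]$, which guarantees that fundamental representations exist, these should generate $\O(G)$.

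If this fails, I would instead argue at the level of functors of points. Checking on the torsion-free test algebras $B$ alone suffices, because Step 2 already gives injectivity. I would then reduce to the generic locus and a rank-one analysis near each wall $\check\alpha=0$, where the Gelfand--Graev action is explicit through $\SL_2$.
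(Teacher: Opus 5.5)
\textbf{Verdict: genuine gap.} Your Steps 1 and 2 are essentially fine. Step 1 is Proposition~\ref{Fully faithfulness of tensoring up even for algebra objects} and does not need flatness. Step 2 should use that $\varpi$ is an isomorphism over the preimage of the regular locus of $\LTd$, not merely birational. Step 3, however, does not close.

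You reduce surjectivity to the inclusion $\O(\Tpsirt)\subseteq E(A)$, and treat it as one direction of Conjecture~\ref{Our conjecture} that might be proved ``without the full conjecture''. But the other direction is automatic. $\O(\Tpsirt)\cong\Symt\otimes_{\Symt^W}\O(\Tpsir)$ is closed under the Demazure operators and contains $\varpi^*(A)$, so $E(A)\subseteq\O(\Tpsirt)$. What you need is therefore exactly the equality $E(A)=\O(\Tpsirt)$. That is the conjecture the paper leaves open, and the theorem is offered only as evidence for it.

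Your sketch does not supply this equality. The Gelfand--Graev action does not preserve $\O(G/U)\subset A$. Nothing in the proposal shows that $D_{w_0}$ of highest-weight matrix coefficients lies in $\O(G)\otimes 1$, let alone that the results generate $\O(G)$.

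There is also a structural obstruction. Steps 2--3, and your fallback (which again tests only torsion-free $B$), use flatness of $S$ over $\LGd$ only to get that $\O(S)$ is torsion free over $\Symt^W$. Any argument that works under that hypothesis alone proves the conjecture:
\begin{itemize}
\item Take $S=\Spec(E(A)^W)$. This is a $G$-scheme over $\LGd$ with $\O(S)$ torsion free, and $\O(S_{\LTd})\cong E(A)$ by Lemma~\ref{Closed Under Demazure Implies Recoverable from GIT Quotient}.
\item Surjectivity for this $S$ would lift the inclusion $A\subseteq E(A)$ to a map $\O(\Tpsirt)\to E(A)$ compatible with $\varpi^*$.
\item Since $E(A)\subset\Frac(A)$ is a domain and $\varpi$ is birational, this map is forced to be the inclusion. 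Hence $\O(\Tpsirt)\subseteq E(A)$.
\end{itemize}

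The missing idea is to use flatness over $\LGd$ itself, to pass to the regular locus where no Demazure envelope is needed. This is the paper's route:
\begin{itemize}
\item Since $\LGd\setminus\LGdreg$ has codimension at least two and $S$ is flat, restriction to $\LGdreg$ is a bijection on both sides (Lemma~\ref{Extendability of Flat Affine Schemes}).
\item Over $\LGdreg$ the target becomes $T^*(G/U)_{\mathrm{reg}}$. This is a $T$-torsor over $\tgdreg$, trivialized on a cover by $W$-stable opens (Proposition~\ref{Cotangent Bundle of Base Affine Space is Torsor Over Subsets of LGdreg}).
\item Applying $\Res^W$ along $q:\tgdreg\to\LGdreg$ makes $\Res^W_q(T^*(G/U)_{\mathrm{reg}})$ a Zariski torsor over $\LGdreg$ under $\Res^W(T^*T)\cong J$. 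Simple connectivity of $[G,G]$ enters here: it excludes $\SO_{2n+1}$ factors, which Knop's isomorphism requires.
\item $\varpi$ induces a $J$-equivariant map $\Tpsir\to\Res^W_q(T^*(G/U)_{\mathrm{reg}})$. This is a morphism of $J$-torsors, hence an isomorphism.
\end{itemize}
This is why the theorem can be proved while Conjecture~\ref{Our conjecture} remains open.
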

\subsection{Outline} In Section \ref{Demazure Envelope Section}, we prove results on the Demazure envelope of certain $\Symt$-algebras which will be used in the proof of Theorem \ref{intro thm}. The proof of the Theorem is given in Section \ref{Invariant Weil Restrictions Subsubsection}. Finally, in Section \ref{Weil Restriction and Cotangent Bundle of G mod U Section}, we prove Theorem \ref{Representability for Flat G Spaces Over LGd}. 

\subsection{Acknowledgements} We would like to thank Roman Bezrukavnikov, Roger Bielawski, C\'edric Bonnaf\'e, Oscar Kivinen, Mark Haiman, Colleen Robichaux, and Ben Webster for interesting and useful discussions. We are grateful to Friedrich Knop and Yiannis Sakellaridis for comments
on an earlier draft of the paper.

Our work is dedicated to George Lusztig. Without his revolutionary insights, representation
theory wouldn't have existed in its present form. In particular, as we have already mentioned above, the
(group-theoretic variant $J^G_G\to G\sslash G$ of the) group scheme of universal
centralizers first appeared in \cite{LusztigCoxeterOrbitsandEigenspacesofFrobenius}. Lusztig also introduced in
\cite{LusztigEquivariantK} certain
$q$-analogues of Demazure operators,  called
nowadays
{\em Demazure-Lusztig operators}. In that paper, Lusztig recognized the true meaning of
the parameter $q$  as a generator of $K^{\mathbb{C}^*}(pt)$ in spite of the fact that the 
relevant variety that was considered in the paper was the flag variety equipped
with a {\em trivial} $\mathbb{C}^*$-action. This $K$-theoretic interpretation of 
Demazure-Lusztig operators played a major role in the
proof of the Deligne-Langlands conjecture
by Kazhdan and Lusztig \cite{KazhdanLusztigProofOfDL}.

\section{Demazure envelopes}\label{Demazure Envelope Section}
\subsection{The nil-Hecke ring}
The natural action of $W\ltimes {\Symt}$ on ${\Symt}$ has an extension
to a $\nh$-action on $\text{Frac}(\t)$. 
The commutation relation \labelcref{com-rel} implies the following {\em twisted Leibniz formula}
     \begin{equation}\label{Twisted Leibniz rule}
    D_s\cdot  p= D_s(p) + s(p)\cdot D_s,\quad\forall p \in \Symt  \text{ and all simple reflections $s$}.
  \end{equation}

We use this to prove the first part of the following Proposition:

\begin{Proposition}\label{standard w0}
  \begin{enumerate}
    \item For any $\nh$-module $M$ the action of $D_{w_0}$ induces an
  $\Symt^W$-linear isomorphism \[D_{w_0}: M^{\mathrm{sign}} \to M^W,\]
  where $M^{\mathrm{sign}}$ denotes the sign isotypic component
  of $M$.
  The inverse of this map is given by the map $m \mapsto \frac{1}{|W|}\Delta m$.
  Furthermore, the natural inclusions \[M^W \subseteq \cap_{w \in W}\,
    \Ker(D_w) \subseteq \cap_{s}\,\Ker(D_s)\] are equalities, where $s$ varies over simple reflections.

\item  The algebra  $\nilHeckeAlgebra$ is free as a right $\Symt$-module with
  basis $D_w,\,w \in W$.

\item  The action of $\nh$ in ${\Symt}$ induces an $\Symt^W$-algebra isomorphism
  $\nh\iso \End_{\Symt^W}(\Symt)$.
  \end{enumerate}
\end{Proposition}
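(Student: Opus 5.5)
The plan is to prove the three parts in the order (2), (1), (3). Part (1) needs identities in $\nh$ that are easiest to justify once we know that $\nh$ acts faithfully on $\Frac(\t)$. We also need $W$ inside $\nh$: we use $s_\al=1-\check\al\, D_{s_\al}$ for simple $s_\al$, which is forced by the formula for $D_{s_\al}$ on $\Frac(\t)$.

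For (2), moving elements of $\Symt$ to the right past $D_s$ with the twisted Leibniz formula \labelcref{Twisted Leibniz rule} and induction on length shows that the $D_w$ span $\nh$ as a right $\Symt$-module. For independence, consider the action on $\Frac(\t)$. Expanding a reduced word, $D_w=\sum_{y\le w} c_{w,y}\, y$ with $c_{w,y}\in\Frac(\t)$, and the top coefficient $c_{w,w}=\pm\prod(\text{coroots})^{-1}$ is nonzero. By Dedekind–Artin independence of the automorphisms $y\in W$ over $\Frac(\t)$, a relation $\sum_w D_w p_w=0$ acting as zero on $\Frac(\t)$ forces, by triangularity, every $p_w=0$. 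Together with the vector space description $\nh\cong \Symt\otimes\nil(W)$, this gives freeness and faithfulness of $\nh$ on $\Frac(\t)$. Faithfulness lets us verify identities in $\nh$ by checking them on $\Frac(\t)$. In particular we get $\Delta D_{w_0}=\sum_w(-1)^{\ell(w)}w$ from \labelcref{Explicit Formula for Dw0}, $D_s^2=0$, and $s\,p=s(p)\,s$.

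For (1), let $M$ be any $\nh$-module, with $W$ acting through the elements $s=1-\check\al D_s$. If $D_sm=0$ then $sm=m$. Conversely, pick $\la$ with $s(\la)=-\la$ and $\langle\la,\check\al\rangle=2$; then \labelcref{com-rel} gives $D_s\la+\la D_s=-2$. If $sm=m$ then $\check\al D_sm=0$, and since $\check\al$ is proportional to $\la$, also $\la D_sm=0$. Using $D_s^2=0$,
\[D_sm=-\tfrac12(D_s\la+\la D_s)D_sm=-\tfrac12 D_s\la D_sm=0.\]
Hence $M^W=\bigcap_s\Ker D_s$. Writing $w=w's$ reduced gives $\Ker D_s\subseteq\Ker D_w$, so the chain $M^W\subseteq\bigcap_w\Ker D_w\subseteq\bigcap_s\Ker D_s$ collapses. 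Since $\ell(sw_0)<\ell(w_0)$, we have $D_sD_{w_0}=0$, so $D_{w_0}M\subseteq M^W$. For $m\in M^{\mathrm{sign}}$, $\Delta D_{w_0}m=\sum_w(-1)^{\ell(w)}w(m)=|W|m$. For $m'\in M^W$, iterating the twisted Leibniz formula with $D_sm'=0$ gives $D_{w_0}(\Delta m')=D_{w_0}(\Delta)\,m'=|W|m'$, and $\Delta m'\in M^{\mathrm{sign}}$ because $s(\Delta)=-\Delta$. So $D_{w_0}$ and $\frac1{|W|}\Delta$ are mutually inverse. Linearity over $\Symt^W$ follows from the Leibniz formula.

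For (3), the action map $\nh\to\End_{\Symt^W}(\Symt)$ is injective: an element killing $\Symt$ kills $\Frac(\t)$, because the action is $\Symt^W$-linear and $\Frac(\t)=\Symt\otimes_{\Symt^W}\Frac(\t)^W$, so it vanishes by faithfulness. By Chevalley, $\Symt$ is free over $\Symt^W$ of rank $|W|$, so both sides are free $\Symt^W$-modules of rank $|W|^2$. The main obstacle is surjectivity; injectivity alone only gives an isomorphism after localization, since $\Frac(\t)\rtimes W\cong\End_{\Frac(\t)^W}(\Frac(\t))$ by Galois theory. My approach is to choose a homogeneous $\Symt^W$-basis $\{b_y\}$ of $\Symt$ given by BGG/Demazure Schubert polynomials, normalized so that $D_w(b_y)$ has constant term $\delta_{w,y}$. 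Then the matrix of the $\Symt^W$-linear functionals $f\mapsto \epsilon(D_w(pf))$, where $\epsilon$ is evaluation at $0$ and $p$ runs over a graded basis, is unitriangular with respect to degree. This shows that $\{pD_w\}$ maps onto a basis of $\End_{\Symt^W}(\Symt)$. Should the normalization be awkward, I would fall back on a graded Nakayama argument: the map is surjective modulo the augmentation ideal of $\Symt^W$, because $\Symt/(\Symt^W_+)$ is the coinvariant algebra, on which the $D_w$ and $\Symt$ generate all endomorphisms.
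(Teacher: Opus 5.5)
Your treatment of (1) and (2) is correct. For (1) you run the paper's argument: $D_sD_{w_0}=0$ together with $s=1-\check\al D_s$, the twisted Leibniz rule applied to $\Delta m'$, and $\Delta D_{w_0}=\sum_w(-1)^{\ell(w)}w$. You also justify two points the paper leaves implicit. First, faithfulness of $\nh$ on $\Frac(\t)$ guarantees that Demazure's formula and the relations among the elements $1-\check\al D_s$ hold in $\nh$ itself, hence in an arbitrary (possibly $\Symt$-torsion) $\nh$-module. Second, your anticommutator argument ($D_s\check\al+\check\al D_s$ is a nonzero scalar and $D_s^2=0$) proves $M^W\subseteq\Ker D_s$. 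The paper calls this a natural inclusion, but its Leibniz step for $m'\in M^W$ actually requires it. (Like the paper, you tacitly exclude $w=e$ from $\bigcap_w\Ker D_w$.) For (2), the paper passes from the left basis to a right basis via Kumar's $\Symt$-antilinear antipode. Your route is different: right spanning via the Leibniz rule, and independence via Artin's lemma and the triangular expansion $D_w=\sum_{y\le w}c_{w,y}\,y$ with $c_{w,w}\neq0$. It is self-contained and produces faithfulness as a by-product, which you then reuse.

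The gap is surjectivity in (3). The paper does not prove this but cites it from Demazure, and, as you say, it is the whole content of (3).

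Your primary argument does not work as written. The maps $f\mapsto\epsilon(D_w(pf))$ are only $k$-linear, not $\Symt^W$-linear, so unitriangularity of such scalars cannot by itself certify a $\Symt^W$-basis of $\End_{\Symt^W}(\Symt)$; at best it is a statement modulo $\Symt^W_+$. So you first need your fallback reduction. Since $\Symt$ is free over $\Symt^W$, we have $\End_{\Symt^W}(\Symt)\otimes_{\Symt^W}k\cong\End_k(C)$ with $C=\Symt/\Symt^W_+\Symt$, and by graded Nakayama it suffices that $\nh\to\End_k(C)$ is onto. But the claim that $\Symt$ and the $D_w$ generate $\End_k(C)$ is exactly what has to be proved, and you give no argument for it.

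Here is one way to supply it. $C$ is a graded $\nh$-module with $C^W=k$, so $\bigcap_s\Ker(D_s|_C)\subseteq k$. For $0\neq v\in C$ with top homogeneous component $v_d$, choose $w$ of maximal length with $D_wv_d\neq0$. Then $D_sD_wv_d=0$ for every simple $s$, so $D_wv_d\in k^\times$. This forces $\ell(w)=d$, and then $D_wv=D_wv_d$, because $D_w$ kills the lower components for degree reasons. Hence every nonzero submodule of $C$ contains $1$, so $C$ is simple over the image of $\nh$, and Burnside's theorem gives surjectivity.

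Alternatively, your Schubert idea works on $C$ with $b_y:=D_{y^{-1}w_0}(\Delta/|W|)$. Then $\epsilon(D_wb_y)=\delta_{w,y}$ for all $w,y$, which makes the $b_y$ independent in $C$. Applying a relation $\sum a_{z,w}b_zD_w=0$ to $b_y$ and inducting on $\ell(y)$ kills all $a_{z,w}$, so the $|W|^2$ operators $b_zD_w$ form a basis of $\End_k(C)$. Either completion, or simply citing Demazure as the paper does, closes (3).
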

\begin{proof}
Observe that, since $D_{s}D_{w_0} = 0$ for any simple reflection $s$, we have that $(1 - s_{\alpha^{\vee}})m = \alpha^{\vee}D_sD_{w_0}(m) = 0$ for any $m \in M$. This shows that $D_{w_0}$ maps the sign isotypic component to the trivial isotypic component and that $\cap_{s}\Ker(D_s) \subseteq M^W$. Multiplication by $\Delta$ clearly maps $M^W$ to the sign isotypic component. By repeated applications of the twisted Leibniz rule \labelcref{Twisted Leibniz rule}, we deduce that $D_{w_0}(\Delta m) = D_s(\Delta)m$ if $m \in M^W$. Therefore, by \labelcref{Explicit Formula for Dw0}, \[\frac{1}{|W|}D_{w_0}(\Delta m) = m\] for $m \in M^W$. Conversely, if $m \in M^{\mathrm{sign}}$ then \[\frac{1}{|W|}\Delta D_{w_0}(m) = \frac{1}{|W|}\sum_w(-1)^{\ell(w)}w(m) = \frac{1}{|W|}|W|m = m\] as desired.

To prove (2) one uses 
the $\Symt$-antilinear antipode map constructed in \cite[Chapter 11]{KumarKacMoodyGroupsTheirFlagVarietiesandRepresentationTheory}.
Since the collection $D_w,\,w\in W$ is a basis of $\nh$ as a left $\Symt$-module,
it follows that this collection gives a basis of $\nh$ as a right $\Symt$-basis as well.
Part (3) is proved in \cite[Corollary 1]{DemazureInvariantsSymmetriquesEntiersDesGroupsDeWeylEtTorsion}.
\end{proof}

\subsection{Demazure Envelopes}\label{ssec env} 

  \begin{Definition}\label{Demazure envelope of module definition}
      Let $M$ be a torsion free $\Symt$-module
         with a compatible $W$-action.
    The (module) \textit{Demazure envelope} $\Emod(M)$ of $M$ is the smallest $\Symt \rtimes W$-submodule, denoted by $E_{\mathrm{mod}}(M)$, of $\Frac(\t)\otimes_{\Symt} M$ containing $M$
    and closed under the Demazure operators.
    \end{Definition}

%

We denote the category of modules for the ring $\Symt \rtimes W$ by $\Symt\mmod^W$. In what follows, we will use the following elementary observation on the relationship between $\Symt^W\mmod$ and $\Symt\mmod^W$:

\begin{Proposition}\label{Fully faithfulness of tensoring up even for algebra objects}
    The functor $\Symt \otimes_{\Symt^W} -: \Symt^W\mmod \to \Symt\mmod^W$ is fully faithful. Moreover, the induced functor on commutative algebra objects are fully faithful. (In other words, if $S_1$ and $S_2$ are any affine schemes over $\LTd\sslash W$, the map \begin{equation*}\label{Map on Hom Sets for Affine schemes over lTdsslashW}\Hom_{\LTd\sslash W}(S_1, S_2) \xrightarrow{(-) \times_{\LTd\sslash W} \LTd} \Hom_{\LTd}^W(S_1^{\LTd}, S_2^{\LTd})\end{equation*} is bijective.)
    
    In either case, the essential image consists of those $M \in \Symt\mmod^W$ such that the multiplication map $c(M): \Symt \otimes_{\Symt^W} M^W \to M$ is an isomorphism. 
\end{Proposition}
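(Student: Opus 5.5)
The argument rests on one standard input: $\Symt$ is a free $\Symt^W$-module of rank $|W|$ (Chevalley), and $W$ is finite while $k$ has characteristic zero. Set $S=\Symt$ and $R=\Symt^W$. For an $R$-module $N$ put $F(N):=S\otimes_R N$, with $W$ acting on the first factor. The candidate right adjoint is $M\mapsto M^W$. The unit is $N\to (S\otimes_R N)^W$, and the counit is the multiplication map $c(M): S\otimes_R M^W\to M$. The plan is to show that this unit is an isomorphism; the rest is formal.

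\emph{Step 1: the unit is an isomorphism.} Since $|W|$ is invertible in $k$, taking $W$-invariants is exact on $S\rtimes W$-modules and is given by the averaging projector $e=\frac{1}{|W|}\sum_w w$. The action of $e$ on $S\otimes_R N$ is $e\otimes \mathrm{id}_N$. Therefore
\[(S\otimes_R N)^W=e(S)\otimes_R N=S^W\otimes_R N=N,\]
using that tensoring over $R$ commutes with the image of the $R$-linear idempotent $e$. This step uses only $\operatorname{char} k=0$; freeness of $S$ over $R$ is not needed here.

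\emph{Step 2: full faithfulness.} A $W$-equivariant $S$-linear map $S\otimes_R N_1\to S\otimes_R N_2$ is determined by its restriction to $1\otimes N_1$. That restriction lands in the $W$-invariants, which by Step 1 are $N_2$. Conversely, every $R$-linear map $N_1\to N_2$ extends uniquely. Hence
\[\Hom^W_S(F N_1,F N_2)=\Hom_R(N_1,(F N_2)^W)=\Hom_R(N_1,N_2).\]

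\emph{Step 3: algebra objects.} For commutative algebras, $F$ is symmetric monoidal, since $S\otimes_R(B_1\otimes_R B_2)\cong F B_1\otimes_S F B_2$. The bijection of Step 2 restricted to algebra maps stays a bijection. Indeed, a $W$-equivariant $S$-algebra map $F B_1\to F B_2$ restricts on $B_1=(F B_1)^W$ to an $R$-algebra map into $(F B_2)^W=B_2$. Conversely, the $S$-linear extension of an $R$-algebra map is an algebra map. Passing to $\Spec$ gives the stated bijection on $\Hom_{\LTd\sslash W}(S_1,S_2)$.

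\emph{Step 4: essential image.} If $M\cong F N$, then by Step 1 we have $M^W\cong N$, and $c(M)$ is identified with the identity, so it is an isomorphism. Conversely, if $c(M)$ is an isomorphism, then $M\cong F(M^W)$. For algebra objects $M^W$ is a subalgebra, and $c(M)$ is an algebra map, so the same argument applies.

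I do not expect a serious obstacle here. The only point needing care is Step 1, namely checking that $W$-invariants commute with $S\otimes_R-$. This is handled by the characteristic-zero averaging idempotent, which is where the hypothesis on $k$ is used.
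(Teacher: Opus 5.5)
Your proof is correct and follows essentially the same route as the paper. Both use the adjunction $\Symt\otimes_{\Symt^W}(-) \dashv (-)^W$, show the unit is an isomorphism because $W$-invariants are exact in characteristic zero, and read off the essential image from the triangle identity $c(L(M_0))\circ L(u(M_0))=\mathrm{id}$. Your splitting $\Symt=\Symt^W\oplus(1-e)\Symt$ of $\Symt^W$-modules via the averaging idempotent makes explicit the step the paper compresses into ``since $(-)^W$ is exact,'' and your separate check for algebra objects spells out what the paper leaves implicit.
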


\begin{proof}
In either case, the functor $L := \Symt \otimes_{\Symt^W} -$ has right adjoint given by $R := (-)^W$: indeed, the unit map $M_0 \to (\Symt \otimes_{\Symt^W} M_0)^W$ for a fixed $M_0 \in \Symt^W\mmod$ is $m_0 \mapsto 1 \otimes m_0$, and, for $M \in \Symt \mmod^W$, the counit map $c(M): \Symt \otimes_{\Symt^W} M^W \to M$ is given by the formula $p \otimes m \mapsto pm$. (One can check directly that these satisfy the relevant adjunction criteria.) Since $(-)^W$ is exact, the unit map is an isomorphism for all $M_0$ and thus the left adjoint $\Symt \otimes_{\Symt^W} -$ is fully faithful. 

We now characterize the essential image: observe that if $c(M)$ is an isomorphism then $M$ certainly lies in the essential image of $\Symt \otimes_{\Symt^W} -$. Conversely, if $M \cong \Symt \otimes_{\Symt^W} M_0$ for some $M_0 \in \Symt^W\mmod$, we observe that the composite of \[\Symt \otimes_{\Symt^W} M_0\xrightarrow{p \otimes m_0 \mapsto p \otimes 1 \otimes m_0} \Symt \otimes_{\Symt^W} (\Symt \otimes_{\Symt^W} M_0)^W\] (i.e. the map $L(u(M_0))$) with the map \[\Symt \otimes_{\Symt^W} (\Symt \otimes_{\Symt^W} M_0)^W \xrightarrow{p \otimes p' \otimes m_0 \mapsto pp' \otimes m_0} \Symt \otimes_{\Symt^W} M_0\] is the identity. Since $L(u(M_0))$ is an isomorphism (since $u(M_0)$ is an equivalnce) we deduce that $c(L(M_0)) \cong c(M)$ is an isomorphism, as desired.
\end{proof}


\begin{Proposition}\label{Forgetful Functor form nilHeckeAlgebra to S rtimes W mod fully faithful}
The forgetful functor $\nilHeckeAlgebra\mmod \to \Symt\mmod^W$ is fully faithful. Moreover: \begin{enumerate}
    \item The essential image of this functor consists those $M \in \Symt\mmod^W$ for which the module map $\Symt \otimes_{\Symt^W} M^W \to M$ is an isomorphism. 
    \item For any $M' \in \nilHeckeAlgebra\mmod$, the action map $\nilHeckeAlgebra \otimes_{\Symt \rtimes W} M' \to M'$ is an isomorphism.
\end{enumerate}
\end{Proposition}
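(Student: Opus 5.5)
The plan is to reduce everything to the structure theory of $\nh$ recorded in Proposition \ref{standard w0}, above all part (3), $\nh\cong\End_{\Symt^W}(\Symt)$, and to the adjunction of Proposition \ref{Fully faithfulness of tensoring up even for algebra objects}.

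\emph{Full faithfulness.} Since $\Symt$ and $W$ generate $\nh$ together with the $D_s$, it suffices to show that every $\Symt\rtimes W$-linear map $f:M\to N$ between $\nh$-modules commutes with each $D_s$. For this I will use the commutation relation \eqref{com-rel}. Pick $\la\in\t^*$ with $\langle\la,\check\al\rangle=1$; over characteristic zero one may take $\la=\al/2$. Then
\[
D_{s_\al}=D_{s_\al}s_\al(\la)s_\al\cdot s_\al\ \ldots
\]
is unwieldy, so I will instead use the cleaner identity
\[
\langle\la,\check\al\rangle D_{s_\al}=D_{s_\al}\la-s_\al(\la)D_{s_\al}
\]
combined with $s_\al=1-\check\al D_{s_\al}$. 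The second relation holds in $\nh$ because it holds in $\End_{\Symt^W}(\Symt)$ and part (3) gives faithfulness. Multiplying out, I expect an expression of the form
\[
D_{s_\al}=c\,(\la-s_\al(\la)\,s_\al)
\]
up to a term involving $D_{s_\al}$ again. Solving this requires care: there is no division by $\check\al$ in $\nh$.

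The cleaner route is to prove (1) and (2) first and deduce full faithfulness from them. Set $M':=\nh$-module. By Proposition \ref{standard w0}(1), $M'^W=\bigcap_s\Ker D_s$. Let $e:=\frac1{|W|}\sum_w w$. Using (3), $\nh\cong\End_{\Symt^W}(\Symt)$ is a matrix algebra over $\Symt^W$, since $\Symt$ is free of rank $|W|$ over $\Symt^W$ by Chevalley. By Morita theory, $\nh\mmod\simeq\Symt^W\mmod$ via $M'\mapsto M'^W$ and $M_0\mapsto\Symt\otimes_{\Symt^W}M_0$. Here the idempotent $e$ satisfies $\nh e\cong\Symt$ and $e\nh e\cong\Symt^W$, and the key point is that $e$ is a full idempotent, i.e.\ $\nh e\nh=\nh$. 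This fullness is the main obstacle. I will check it via $D_{w_0}\cdot\frac{1}{|W|}\Delta$ and Proposition \ref{standard w0}(1): in $\End(\Symt)$, the operator $e$ generates a two-sided ideal, and since $\Symt$ is a progenerator over $\Symt^W$, the ideal generated by the rank-one projection onto $\Symt^W$ is everything.

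\emph{Deducing the statement.} The Morita equivalence gives (2), because $\nh\otimes_{\Symt\rtimes W}M'\cong\nh e\otimes_{\Symt^W}M'^W$. Here I use that $\Symt\rtimes W\subset\nh$ with $e$ as the common idempotent, and that $(\Symt\rtimes W)e\cong\Symt$ as well, so that $\nh\otimes_{\Symt\rtimes W}-$ sends $\Symt\otimes_{\Symt^W}N_0$ to $\Symt\otimes_{\Symt^W}N_0$. The counit of the equivalence then gives that $c(M')$ is an isomorphism, which proves one direction of (1). Conversely, if $c(M)$ is an isomorphism, then $M\cong\Symt\otimes_{\Symt^W}M^W$ carries the $\nh$-action through the first factor via (3). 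Finally, the forgetful functor factors as the composite
\[
\nh\mmod\simeq\Symt^W\mmod\xrightarrow{\;\Symt\otimes_{\Symt^W}-\;}\Symt\mmod^W,
\]
and one checks that this composite agrees with forgetting, since both send $M'$ to $\Symt\otimes M'^W\cong M'$. The second functor is fully faithful by Proposition \ref{Fully faithfulness of tensoring up even for algebra objects}, so the forgetful functor is fully faithful as well.
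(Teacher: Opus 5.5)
Your proof is correct, but it takes a genuinely different route from the paper.

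The paper avoids Morita theory altogether. It uses the adjunction $\nh\otimes_{\Symt\rtimes W}(-)\dashv\mathrm{Forget}$ and notes that full faithfulness and (2) both amount to the counit being an isomorphism. It then reduces to the single module $\nh$ via $c(M)=c(\nh)\otimes_{\nh}\mathrm{id}_M$. Injectivity of $c(\nh)$ comes from a torsion argument: every element of $\nh$ lands in $\Symt\rtimes W$ after multiplying by a nonzero $p\in\Symt$, and $\nh\otimes_{\Symt\rtimes W}\nh$ is $\Symt$-torsion free. The essential-image statement in (1) is simply cited from \cite[Lemma 7.1.5]{Gin}.

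You instead combine Proposition \ref{standard w0}(3) with Chevalley's theorem to get $\nh\cong\End_{\Symt^W}(\Symt)\cong\mathrm{Mat}_{|W|}(\Symt^W)$. This gives a Morita equivalence $M'\mapsto eM'=M'^W$ with inverse $\Symt\otimes_{\Symt^W}(-)$, and you read everything off from it:
\begin{itemize}
\item One direction of (1) is the Morita counit, and the converse is the $\nh$-action on the first tensor factor.
\item (2) follows from $(\Symt\rtimes W)e\cong\Symt\cong\nh e$. You correctly avoid needing $e$ to be full in $\Symt\rtimes W$; it is not, since it acts by zero on the sign character.
\item Full faithfulness follows because Forget is naturally isomorphic, via $c$, to the equivalence followed by the fully faithful functor of Proposition \ref{Fully faithfulness of tensoring up even for algebra objects}.
\end{itemize}

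Your route is more structural. It proves the essential-image statement rather than citing it, and it yields the stronger fact $\nh\mmod\simeq\Symt^W\mmod$. The paper's route avoids the identification $\nh\cong\End_{\Symt^W}(\Symt)$ and Chevalley freeness. It needs only the generic agreement of $\nh$ with $\Symt\rtimes W$ and a torsion-freeness check.

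For a clean write-up, make these changes:
\begin{itemize}
\item Delete the opening attempt to commute $\Symt\rtimes W$-linear maps with $D_s$ directly. It stalls because $\nh$-modules may have $\check\alpha$-torsion, which is exactly why a structural argument is needed.
\item Fullness of $e$ in $\nh$ is not a real obstacle. It holds because $\Symt$ is finitely generated projective over $\Symt^W$, so $\mathrm{id}_{\Symt}$ factors through $(\Symt^W)^{|W|}$. Equivalently, $\nh e\cong\Symt$ is itself the Morita progenerator and $\Hom_{\nh}(\nh e,-)=e(-)$.
\item Drop the appeal to $D_{w_0}\cdot\frac{1}{|W|}\Delta$; it plays no role.
\end{itemize}
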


\begin{proof}
The forgetful functor has a left adjoint $\nilHeckeAlgebra \otimes_{\Symt \rtimes W} -$, and, as in the proof of Proposition \ref{Fully faithfulness of tensoring up even for algebra objects}, the counit $c(M)$ for a fixed $M \in \nilHeckeAlgebra\mmod$ is given by the formula $h \otimes m \mapsto hm$. To show the forgetful functor is fully faithful, it suffices to show that the counit is an isomorphism for a module $M$ for the nil-Hecke algebra $\nilHeckeAlgebra$. (Of course, this will also show (2).) Since \begin{equation}\xymatrix@R+2em@C+2em{\nilHeckeAlgebra \otimes_{\Symt \rtimes W} \nilHeckeAlgebra \otimes_{\nilHeckeAlgebra} M \ar[d]^{\mathrm{id} \otimes \mathrm{mult}}\ar[r]^{\textcolor{white}{whitee}c(\nilHeckeAlgebra) \otimes \mathrm{id}_M} & \nilHeckeAlgebra \otimes_{\nilHeckeAlgebra} M \ar[d]^{\mathrm{mult}}\\
\nilHeckeAlgebra \otimes_{\Symt \rtimes W} M \ar[r]^{c(M)}&  M
  }\end{equation} commutes and the vertical arrows are isomorphisms, it suffices to prove that $c(\nilHeckeAlgebra)$ itself is an isomorphism. Since $c(\nilHeckeAlgebra)(1 \otimes h) = h$ for all $h \in \nilHeckeAlgebra$, the map $c(\nilHeckeAlgebra)$ is surjective. For injectivity, observe that for any $\sum_i h_i \otimes h_i' \in \nilHeckeAlgebra \otimes_{\Symt \rtimes W} \nilHeckeAlgebra$ such that $\sum_i h_i h_i' = 0$ in $\nilHeckeAlgebra$, there exists some nonzero $p \in \Symt$ such that $ph_i \in \Symt \rtimes W$ for all $i$. Therefore \[p\sum_i h_i \otimes h_i' = \sum_i ph_i \otimes h_i' = \sum_i 1 \otimes ph_ih_i' = \sum_i p \otimes h_ih_i'= 0\] in $\nilHeckeAlgebra \otimes_{\Symt \rtimes W} \nilHeckeAlgebra$. Next, we observe that $\nilHeckeAlgebra \otimes_{\Symt \rtimes W} \nilHeckeAlgebra$ is torsion free as a left $\Symt$-module: indeed, $\nilHeckeAlgebra \otimes_{\Symt \rtimes W} \nilHeckeAlgebra$ is isomorphic to the $W$-invariants of the free $\Symt$-module $\nilHeckeAlgebra \otimes_{\Symt} \nilHeckeAlgebra$. From the fact that $p\sum_i h_i \otimes hh_i = 0$, then, we deduce that $\sum_i h_i \otimes h_i' = 0$ as required. The description on the essential image is known, see for example \cite[Lemma 7.1.5]{Gin}.
\end{proof}
The following Lemma, which can be obtained from combining \cite[Theorem 1.1]{LoRemark} and \cite[Lemma 7.1.5]{Gin}, will also be used in what follows: 

\begin{Lemma}\label{Closed Under Demazure Implies Recoverable from GIT Quotient}
Assume $M$ is a  torsion-free $\Symt$-module equipped with a compatible $W$-action. If $M$ is closed under the action of $D_s$ for every simple reflection $s$, then the multiplication map \[\Symt \otimes_{\Symt^W} M^W \xrightarrow{} M\] is an isomorphism. 
\end{Lemma}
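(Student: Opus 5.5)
The strategy is to make $M$ into an $\nh$-module and then quote the essential-image description in Proposition \ref{Forgetful Functor form nilHeckeAlgebra to S rtimes W mod fully faithful}(1). Since $M$ is torsion free, it embeds in $\Frac(\t)\otimes_{\Symt} M$. By Proposition \ref{Forgetful Functor form nilHeckeAlgebra to S rtimes W mod fully faithful}, the latter carries an $\nh$-action extending the $\Symt\rtimes W$-action, with $D_s$ acting by $u\mapsto \frac{1}{\check\al}(u-s(u))$. The hypothesis says that $M$ is stable under each $D_s$ for $s$ simple. It is also stable under $\Symt$ and $W$. By Proposition \ref{standard w0}(2), $\nh$ is generated as an algebra by $\Symt$ and the $D_s$, since $D_w=D_{s_1}\cdots D_{s_\ell}$. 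Hence $M$ is an $\nh$-submodule of $\Frac(\t)\otimes_{\Symt}M$, and the $\nh$-action on $M$ extends its given $\Symt\rtimes W$-structure.

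One point needs checking: the $W$-action on $M$ must be recovered from the $\nh$-structure. Indeed, $s=1-\check\al D_s$ holds in the action on $\Frac(\t)\otimes M$, so the restriction of the $\nh$-module structure to $\Symt\rtimes W$ is exactly the original one. In other words, $M$ lies in the essential image of the forgetful functor $\nh\mmod\to\Symt\mmod^W$. Proposition \ref{Forgetful Functor form nilHeckeAlgebra to S rtimes W mod fully faithful}(1) then says precisely that $\Symt\otimes_{\Symt^W}M^W\to M$ is an isomorphism.

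The main obstacle is that part (1) of that proposition was only cited to \cite[Lemma 7.1.5]{Gin}, so it is safer to also prove the needed direction directly for an $\nh$-module $M$. I would use Proposition \ref{standard w0}(3), $\nh\cong\End_{\Symt^W}(\Symt)$, together with the fact that $\Symt$ is free of rank $|W|$ over $\Symt^W$ (Chevalley). Choose a basis $b_1,\dots,b_n$ of $\Symt$ over $\Symt^W$ and the dual basis $b_i^*$ with respect to the perfect pairing $(p,q)\mapsto D_{w_0}(pq)$. The elements $e_{ij}\in\nh$ acting by $p\mapsto b_i D_{w_0}(b_j^*p)$ are then matrix units. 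Thus $\nh\cong \mathrm{Mat}_n(\Symt^W)$, and every $\nh$-module has the form $\Symt\otimes_{\Symt^W}N$ with $N=e_{11}M$.

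It remains to identify $N$ with $M^W$, which is the content of Proposition \ref{standard w0}(1). The map $D_{w_0}$ carries $M$ onto $M^W$: it lands in $M^W$ because $D_sD_{w_0}=0$. It is surjective because $D_{w_0}(\Delta m)=|W|m$ for $m\in M^W$, using the twisted Leibniz rule and the formula \labelcref{Explicit Formula for Dw0}. Taking $b_1=1$, the idempotent $e_{11}$ acts by $m\mapsto b_1^*$-twisted $D_{w_0}$, so its image is $M^W$. Finally, the Morita isomorphism $\Symt\otimes_{\Symt^W}e_{11}M\to M$ is exactly the multiplication map.
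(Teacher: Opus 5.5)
Your argument is correct and follows the paper's route. The paper gives no proof of its own; it only combines Lo's Theorem~1.1 with Ginzburg's Lemma~7.1.5, the latter being the essential-image statement of Proposition~\ref{Forgetful Functor form nilHeckeAlgebra to S rtimes W mod fully faithful}(1) that you invoke. Your direct check that $M$ is an $\nh$-submodule of $\Frac(\t)\otimes_{\Symt}M$, together with the Morita argument via $\nh\cong\End_{\Symt^W}(\Symt)\cong\mathrm{Mat}_{|W|}(\Symt^W)$, makes this self-contained.

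The one step to tighten is $e_{11}M=M^W$. Surjectivity of $D_{w_0}\colon M\to M^W$ does not by itself give surjectivity of $m\mapsto D_{w_0}(b_1^*m)$. Argue instead as follows: for $m\in M^W$ one has $\check\alpha D_s m=m-s(m)=0$, so $D_s m=0$ by torsion-freeness. The twisted Leibniz rule then gives $D_{w_0}(b_1^*m)=D_{w_0}(b_1^*)m=D_{w_0}(b_1^*b_1)m=m$. Hence $e_{11}$ is the identity on $M^W$.
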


The following fact can be immediately deduced from Proposition \ref{standard w0}(2). 

\begin{Proposition}\label{Demazure envelope is naive Demazure envelope for modules}
  For $M$ as in Definition \ref{Demazure envelope of module definition}, there is an
  equality 
  \[E_{\mathrm{mod}}(M)=\{\sum_wD_w(m_w) ,\; w \in W, m_w \in M\}.
    \]
\end{Proposition}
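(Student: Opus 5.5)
We argue by proving two inclusions. Set $N:=\{\sum_w D_w(m_w) : m_w\in M\}\subseteq \Frac(\t)\otimes_{\Symt} M$; here $D_w$ acts on $\Frac(\t)\otimes_{\Symt}M$ through the unique $\nh$-module structure extending the $\Symt\rtimes W$-action, as recalled above.

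First, $N\subseteq \Emod(M)$. Indeed, $\Emod(M)$ contains $M$ and is closed under each $D_s$. Hence it is closed under every composite $D_w=D_{s_1}\cdots D_{s_\ell}$, and also under sums, so it contains $N$.

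Second, $\Emod(M)\subseteq N$. Since $\Emod(M)$ is the smallest submodule with the stated properties, it suffices to show three things about $N$: it contains $M$, it is a $\Symt\rtimes W$-submodule, and it is closed under the Demazure operators.
\begin{itemize}
\item $N$ contains $M$, by taking $w=e$.
\item $N$ is closed under the $D_s$ (hence under all $D_w$): the product $D_sD_w$ in $\nh$ is either $D_{sw}$ or $0$, so $D_s(D_w m)=(D_sD_w)(m)$ is either $D_{sw}(m)$ or $0$, and both lie in $N$.
\item $N$ is stable under $W$: it suffices to treat a simple reflection $s$, and on $\Frac(\t)\otimes M$ we have $s=1-\check\alpha D_s$. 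So $W$-stability reduces to closure under $D_s$ and under multiplication by $\Symt$.
\item $N$ is stable under multiplication by $p\in\Symt$; this is the key step. We have $p\,D_w(m)=(pD_w)(m)$ with $pD_w\in\nh$. By Proposition \ref{standard w0}(2), $\nh$ is free as a right $\Symt$-module with basis $\{D_w\}$, so
\[pD_w=\sum_y D_y\, q_y,\qquad q_y\in\Symt.\]
Therefore $p\,D_w(m)=\sum_y D_y(q_y m)$. Each $q_y m$ lies in $M$ because $M$ is an $\Symt$-module, so $p\,D_w(m)\in N$.
\end{itemize}
Thus $N$ is a $\Symt\rtimes W$-submodule containing $M$ and closed under Demazure operators, which gives $\Emod(M)\subseteq N$.

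The only nontrivial input is the right-basis statement of Proposition \ref{standard w0}(2). It is what lets us move scalars from the left of a Demazure operator to its right, where $M$ absorbs them; everything else is formal.
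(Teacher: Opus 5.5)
Your proof is correct and follows the paper's approach: the paper only says the statement is immediate from Proposition \ref{standard w0}(2), and you have filled in the intended details. The key step is that right-freeness of $\nh$ over $\Symt$ lets you move scalars past the $D_w$ into $M$, so the set of sums $\sum_w D_w(m_w)$ is closed under $\Symt$ and the $D_s$; it is then also $W$-stable, via $s=1-\check\alpha D_s$.
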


\begin{Corollary}\label{Trivial Part of EmodM is Dw0 of M}
If $M$ is a torsion free $\Symt$-module, $\Emod(M)^W = D_{w_0}(\Emod(M)) = D_{w_0}(M)$. 
\end{Corollary}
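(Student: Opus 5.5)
We have to show that $\Emod(M)^W = D_{w_0}(\Emod(M)) = D_{w_0}(M)$. The plan is to prove the chain of inclusions
\[D_{w_0}(M)\subseteq D_{w_0}(\Emod(M))\subseteq \Emod(M)^W\subseteq D_{w_0}(M).\]

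\textbf{First two inclusions.} The first inclusion holds because $M\subseteq \Emod(M)$.

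For the second, recall that $\Emod(M)$ is closed under all Demazure operators, so $D_{w_0}(\Emod(M))\subseteq \Emod(M)$. Inside $\Frac(\t)\otimes_{\Symt}M$ we have $D_sD_{w_0}=0$ for every simple reflection $s$. Hence $D_{w_0}(u)$ lies in $\cap_s\Ker(D_s)$, which is contained in the $W$-invariants. This is exactly as in the proof of Proposition \ref{standard w0}(1): the identity $(1-s)v=\check\alpha\, D_s(v)$ shows that $D_s(v)=0$ forces $s(v)=v$. So the image of $D_{w_0}$ lies in $\Emod(M)^W$.

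\textbf{Third inclusion.} Let $u\in\Emod(M)^W$. By Proposition \ref{Demazure envelope is naive Demazure envelope for modules} we can write $u=\sum_w D_w(m_w)$ with $m_w\in M$.

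Since $\Frac(\t)\otimes_{\Symt}M$ is an $\nh$-module, Proposition \ref{standard w0}(1) applies to it. It says that $D_{w_0}$ restricts to an isomorphism from the sign component onto the invariants, with inverse $v\mapsto \frac{1}{|W|}\Delta v$. Therefore
\[u=\tfrac{1}{|W|}D_{w_0}(\Delta u)=\tfrac{1}{|W|}\sum_w D_{w_0}\bigl(\Delta\, D_w(m_w)\bigr).\]

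So it suffices to show that each term $D_{w_0}(\Delta\,D_w(m))$ lies in $D_{w_0}(M)$. This is the step I expect to be the main obstacle.

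\textbf{The key claim.} The cleanest route is the following claim: for each $w$ there is $h_w\in\Symt\rtimes W$ such that
\[D_{w_0}\,\Delta\, D_w = D_{w_0}\,h_w \quad\text{in } \nh.\]
Granting the claim, $D_{w_0}(\Delta\,D_w(m))=D_{w_0}(h_w m)$, and $h_w m\in M$ because $M$ is a $\Symt\rtimes W$-module. This finishes the third inclusion.

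\textbf{Proving the claim.} I would argue by induction on $\ell(w)$, using the twisted Leibniz rule \labelcref{Twisted Leibniz rule} and the relation $D_{w_0}\,s=-D_{w_0}$, which comes from $D_{w_0}D_s=\pm$ (a term killed), or more directly from $D_{w_0}(1-s)= D_{w_0}\check\alpha D_s$.

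The model case is a simple reflection $s$ with root $\alpha$. Write $\Delta=\check\alpha\,\Delta'$ with $s(\Delta')=-\Delta'$ up to sign bookkeeping. Then
\[\Delta D_s(m)=\Delta'(m-s(m)),\]
which already lies in $\Symt\rtimes W\cdot m$.

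In general I would try to show $\Delta D_w\in \Symt\rtimes W$ inside $\nh\otimes\Frac(\t)$. Indeed, $D_w$ expands as $\sum_{y\le w} c_y\,y$ with denominators dividing products of the coroots $\check\beta$ for $\beta$ in the inversion set of $w$, and $\Delta$ clears all of these. Thus $h_w=\Delta D_w$ itself is in $\Symt\rtimes W$, and the claim follows without needing the induction.

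The point to verify carefully is this denominator bound: that the coefficients $c_y$ of $D_w$ have denominators dividing the product of the distinct positive coroots in the inversion set of $w$, each to the first power. I would check it by induction on a reduced expression $w=s_1\cdots s_\ell$, using $D_{s}=\frac{1}{\check\alpha}(1-s)$.
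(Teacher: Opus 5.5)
\paragraph{The gap is in the key claim.} Your chain of inclusions is fine: the first two inclusions and the identity $u=\frac{1}{|W|}D_{w_0}(\Delta u)$ are correct. The problem is the argument you give for the key claim. The denominator bound you plan to verify is false. Take type $A_2$ and view $\nh\subseteq\Frac(\t)\rtimes W$ with coefficients on the left. Then
\[D_{s_1}D_{s_2}=\frac{1}{\check\alpha_1\check\alpha_2}(1-s_2)-\frac{1}{\check\alpha_1(\check\alpha_1+\check\alpha_2)}(s_1-s_1s_2).\]
So the coefficients of $D_{s_1s_2}$ involve all three positive coroots. In particular the coefficient of $e$ has denominator $\check\alpha_1\check\alpha_2$, which divides neither the product over $\{\check\alpha_1,\check\alpha_1+\check\alpha_2\}$ nor the product over $\{\check\alpha_2,\check\alpha_1+\check\alpha_2\}$. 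These are the only two inversion-set conventions for $s_1s_2$.

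Nor can an induction along a reduced word give ``each coroot to the first power''. The individual subword terms carry repeated factors that cancel only after summing. For example, the coefficient of $e$ in $D_{s_1}D_{s_2}D_{s_1}$ is $\frac{1}{\check\alpha_1^2\check\alpha_2}-\frac{1}{\check\alpha_1^2(\check\alpha_1+\check\alpha_2)}=\frac{1}{\Delta}$. The statement $\Delta D_w\in\Symt\rtimes W$ you are aiming for is in fact true. Proving it, though, needs a stronger inductive invariant that tracks these cancellations, for instance that the coefficients $a_y$ of $\Delta D_w$ satisfy $a_y+a_{s_\gamma y}\in\check\gamma\,\Symt$ for every positive root $\gamma$. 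Your sketch does not supply this, and none of it is needed.

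\paragraph{The missing idea and the fix.} What you are missing is the nil relation $D_{w_0}D_v=0$ for every $v\neq e$, which holds because $\ell(w_0v)<\ell(w_0)+\ell(v)$. With it your key claim is immediate. Expand $\Delta D_w=\sum_v D_v\,p_v$ in the right $\Symt$-basis of Proposition~\ref{standard w0}(2). Then $D_{w_0}\Delta D_w=D_{w_0}p_e$, so $h_w:=p_e\in\Symt$ works.

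There is an even more direct route. Since $\Emod(M)$ is a $\Symt$-module, $\Delta u\in\Emod(M)$. By Proposition~\ref{Demazure envelope is naive Demazure envelope for modules} you can write $\Delta u=\sum_v D_v(m'_v)$, and then $|W|\,u=D_{w_0}(\Delta u)=D_{w_0}(m'_e)\in D_{w_0}(M)$.

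This is exactly the paper's proof. It gets $\Emod(M)^W\subseteq D_{w_0}(\Emod(M))$ from multiplying by $\Delta$, and $D_{w_0}(\Emod(M))=D_{w_0}(M)$ from $D_{w_0}D_v=0$ for $v\neq e$. No information about the denominators of the $D_w$ is ever used.
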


\begin{proof}
Since $D_sD_{w_0} = 0$ for any simple reflection $s$, we deduce that $D_{w_0}(\Emod(M)) \subseteq \Emod(M)^W$. Conversely, if $m \in \Emod(M)^W$, then $\Delta m \in \Emod(M)$ has the property that $D_{w_0}(\Delta m) = |W|m$ by Proposition \ref{standard w0}(1). We deduce that $D_{w_0}(\frac{1}{|W|}\Delta m) = m$ an so $D_{w_0}(\Emod(M)) \supseteq \Emod(M)^W$. 

Next, we prove $D_{w_0}(\Emod(M)) = D_{w_0}(M)$. Of course, $D_{w_0}(\Emod(M)) \supseteq D_{w_0}(M)$. We now prove $D_{w_0}(\Emod(M)) \subseteq D_{w_0}(M)$. Choose some $D_{w_0}(m') \in D_{w_0}(\Emod(M))$. By Proposition \ref{Demazure envelope is naive Demazure envelope for modules}, we may write $m' = \sum_wD_w(m_w)$ for some $m_w \in M$. Since $D_{w_0}D_s = 0$ for all simple reflections $s$, we deduce that \[D_{w_0}(m') = D_{w_0}(\sum_wD_w(m_w)) = \sum_wD_{w_0}D_w(m_w) = D_{w_0}(m_1)\] and so $D_{w_0}(m') \in D_{w_0}(M)$ as desired.
\end{proof}

We also give another description of the Demazure envelope:

\begin{Proposition}\label{Demazure Envelope is Image of NilHecke Action Map}
For $M$ as in Definition \ref{Demazure envelope of module definition}, the action map induces an isomorphism \[\nilHeckeAlgebra \otimes_{\Symt \rtimes W} M \xrightarrow{\sim} \Emod(M).\] 
\end{Proposition}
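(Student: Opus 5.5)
The map in question is $a: \nilHeckeAlgebra \otimes_{\Symt \rtimes W} M \to \Frac(\t)\otimes_{\Symt} M$, $h\otimes m\mapsto hm$. Here $\nilHeckeAlgebra$ acts on $\Frac(\t)\otimes_{\Symt} M$ by the unique extension recalled before Proposition \ref{standard w0}. I will identify its image with $\Emod(M)$ and then show it is injective.

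\textbf{Image.} By Proposition \ref{standard w0}(2), every $h\in\nilHeckeAlgebra$ can be written $h=\sum_w D_w p_w$ with $p_w\in\Symt$. Hence every element $\sum_i h_i\otimes m_i$ of the source can be rewritten as $\sum_w D_w\otimes m_w$ with $m_w\in M$, since $p_w m_i \in M$. Its image is therefore $\sum_w D_w(m_w)$. By Proposition \ref{Demazure envelope is naive Demazure envelope for modules}, these elements are exactly the elements of $\Emod(M)$. So $a$ surjects onto $\Emod(M)$.

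\textbf{Injectivity.} I would copy the torsion argument from the proof of Proposition \ref{Forgetful Functor form nilHeckeAlgebra to S rtimes W mod fully faithful}. Suppose $x=\sum_i h_i\otimes m_i$ satisfies $\sum_i h_i m_i=0$. Choose a nonzero $p\in\Symt$ with $ph_i\in \Symt\rtimes W$ for all $i$; for instance $p$ a suitable power of $\Delta$, which clears all denominators since $D_{s_\alpha}=\frac{1}{\check\alpha}(1-s_\alpha)$ in $\Frac(\t)\rtimes W$. Then
\[
px=\sum_i 1\otimes (ph_i)m_i = 1\otimes p\sum_i h_im_i,
\]
and $p\sum_i h_i m_i$ is computed inside $M\subseteq \Frac(\t)\otimes_{\Symt}M$ by torsion freeness. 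So $px=0$.

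It remains to show that $\nilHeckeAlgebra\otimes_{\Symt\rtimes W}M$ is $\Symt$-torsion free for the left $\Symt$-action. This is the main obstacle. I would argue as follows:
\begin{itemize}
\item By Proposition \ref{standard w0}(2), $\nilHeckeAlgebra\otimes_{\Symt}M\cong \bigoplus_w D_w\otimes M$ is a direct sum of copies of $M$ as a left $\Symt$-module, hence torsion free.
\item $\nilHeckeAlgebra\otimes_{\Symt\rtimes W}M$ is the module of $W$-coinvariants of $\nilHeckeAlgebra\otimes_{\Symt}M$ for the action $w\cdot(h\otimes m)=hw^{-1}\otimes wm$. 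This action commutes with the left $\Symt$-action.
\item In characteristic zero, coinvariants are identified with invariants by averaging.
\item A submodule of a torsion-free module is torsion free.
\end{itemize}

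One caveat is that torsion freeness here must be taken with respect to the left $\Symt$-action on $\nilHeckeAlgebra\otimes_{\Symt}M$. This is exactly why the right-basis statement of Proposition \ref{standard w0}(2) is needed: it makes $\nilHeckeAlgebra$ free as a right module, so $\nilHeckeAlgebra\otimes_{\Symt}M\cong\bigoplus_w D_w\otimes M$. I would then check left torsion freeness by embedding $\nilHeckeAlgebra\otimes_\Symt M$ into $\Frac(\t)\otimes_{\Symt}(\nilHeckeAlgebra\otimes_{\Symt} M)$. This works because $\nilHeckeAlgebra\subset \Frac(\t)\rtimes W$, the inclusion stays injective after tensoring with $M$ by flatness, and $\Frac(\t)\otimes_{\Symt} M$ is torsion free.

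With torsion freeness established, $px=0$ gives $x=0$, so $a$ is injective, and hence $\nilHeckeAlgebra\otimes_{\Symt\rtimes W}M\xrightarrow{\sim}\Emod(M)$.
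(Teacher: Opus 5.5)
Your proof is correct. Surjectivity is the same as the paper's, but you prove injectivity by a different route.

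\textbf{How the routes differ.} The paper factors the action map through $\nilHeckeAlgebra\otimes_{\Symt\rtimes W}\Emod(M)$. Since $\Emod(M)$ is already an $\nilHeckeAlgebra$-module, the map $\nilHeckeAlgebra\otimes_{\Symt\rtimes W}\Emod(M)\to\Emod(M)$ is an isomorphism by Proposition~\ref{Forgetful Functor form nilHeckeAlgebra to S rtimes W mod fully faithful}(2). Injectivity of $\nilHeckeAlgebra\otimes_{\Symt\rtimes W}M\to\nilHeckeAlgebra\otimes_{\Symt\rtimes W}\Emod(M)$ then comes from exactness of $\nilHeckeAlgebra\otimes_{\Symt\rtimes W}(-)$, i.e.\ right freeness of $\nilHeckeAlgebra$ plus exactness of $W$-(co)invariants.

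You instead run the denominator-clearing argument directly on $M$; the paper uses that argument only for $c(\nilHeckeAlgebra)$. This bypasses Proposition~\ref{Forgetful Functor form nilHeckeAlgebra to S rtimes W mod fully faithful}(2). The price is that you must prove left $\Symt$-torsion freeness of $\nilHeckeAlgebra\otimes_{\Symt\rtimes W}M$. That is more delicate than the paper's case $\nilHeckeAlgebra\otimes_{\Symt}\nilHeckeAlgebra$, which is genuinely left free, because $M$ need not be free. The paper's route is more formal and reuses an established result. Yours is self-contained, and you correctly treat the tensor product as coinvariants identified with invariants by averaging.

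\textbf{Two fixes in the torsion-freeness step.}
\begin{enumerate}
\item Your first bullet is false as stated. $\bigoplus_w D_w\otimes M$ is not a decomposition of left $\Symt$-modules: by the twisted Leibniz rule, $pD_s=D_s\,s(p)+D_s(p)$ in $\nilHeckeAlgebra$, so left multiplication by $p$ mixes summands. Your caveat paragraph repairs this.
\item In that repair, injectivity of $\nilHeckeAlgebra\otimes_{\Symt}M\to(\Frac(\t)\rtimes W)\otimes_{\Symt}M$ cannot come from flatness of $M$; a torsion-free $\Symt$-module need not be flat. The correct reason is that the $D_w$ also form a right $\Frac(\t)$-basis of $\Frac(\t)\rtimes W$, by triangularity in the Bruhat order. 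So the map is the direct sum over $w$ of the localization map $M\to\Frac(\t)\otimes_{\Symt}M$, which is injective because $M$ is torsion free. The target is a left $\Frac(\t)$-vector space, hence left torsion free.
\end{enumerate}
With this justification, the rest of your argument goes through: $W$-invariants of a torsion-free module are torsion free, and $px=0$ then forces $x=0$.
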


\begin{proof}
By Proposition \ref{Demazure envelope is naive Demazure envelope for modules}, this map is surjective. We now verify injectivity. Note the action map factors as a composite \[\nilHeckeAlgebra \otimes_{\Symt \rtimes W} M \xrightarrow{\mathrm{id}_{\nilHeckeAlgebra} \otimes \imath} \nilHeckeAlgebra \otimes_{\Symt \rtimes W} \Emod(M) \xrightarrow{\mathrm{action}} \Emod(M),\] where $\imath$ is the inclusion map, and the rightmost arrow in this composite is an isomorphism by Proposition \ref{Forgetful Functor form nilHeckeAlgebra to S rtimes W mod fully faithful}(2). Therefore it suffices to prove that $\mathrm{id}_{\nilHeckeAlgebra} \otimes \imath$ is injective. It suffices to prove that $\nilHeckeAlgebra \otimes_{\Symt \rtimes W} -$ is an exact functor. But we can factor this functor as a composite of $\nilHeckeAlgebra \otimes_{\Symt} -$ and $W$-invariants, the former of which is exact since  $\nilHeckeAlgebra$ is a free $\Symt$-module generated by $D_w$, see \cite[Corollary 1]{DemazureInvariantsSymmetriquesEntiersDesGroupsDeWeylEtTorsion}, and the latter is exact since the functor $(-)^W$ is exact.
\end{proof}

\begin{proof}[Proof of Proposition \ref{env prop}(1)]
Let $E'(A)$ denote the smallest subalgebra of the field of fractions of $A$ generated by objects of the form $D_w(a)$ for $a \in A$ and $w \in W$, and let $E''(A)$ denote the smallest subalgebra of the field of fractions of $A$ containing $A$ and $D_{w_0}(a)$. We prove that $A = E'(A) = E''(A)$. We certainly have $E'(A) \subseteq E(A)$. To prove that $E(A) \subseteq E'(A)$ it suffices to prove that $E'(A)$ is closed under the Demazure operators. To this end, we first observe that $E'(A)$ is $W$-stable: indeed, this follows since $E(A)$ is $W$-stable (say, by Lemma \ref{Closed Under Demazure Implies Recoverable from GIT Quotient}) and the fact that $W$ acts by algebra automorphisms. Since each Demazure operator $D_{s_{\alpha}}$ is obviously $k$-linear, it suffices to prove that $D_{s_{\alpha}}(D_{w_1}(a_1)...D_{w_n}(a_n))$ lies in $E(A)$ for all $w_i \in W$ and $a_i \in A$. By the twisted Leibniz rule for Demazure operators \labelcref{Twisted Leibniz rule} we have \[D_{s_{\alpha}}(D_{w_1}(a_1)...D_{w_n}(a_n)) = D_{s_{\alpha}}(D_{w_1}(a_1))D_{w_2}(a_2)...D_{w_n}(a_n) + s_{\alpha}(D_{w_1}(a_1))D_{s_{\alpha}}(D_{w_2}(a_2)...D_{w_n}(a_n))\] for any simple reflection $s_{\alpha} \in W$. The former term in this sum is 0 or $D_{s_{\alpha}w_1}(a_1)D_{w_2}(a_2)...D_{w_n}(a_n)$, and thus lies in $E(A)$. By the $W$-invariance of $E(A)$, we observe that \[s_{\alpha}(D_{w_1}(a_1))D_{s_{\alpha}}(D_{w_2}(a_2)...D_{w_n}(a_n)) \in E(A) \text{ if } D_{s_{\alpha}}(D_{w_2}(a_2)...D_{w_n}(a_n)) \in E(A)\] which can be proved by induction. Therefore $E(A) = E'(A)$.

We now prove that $E''(A) = E'(A)$. We certainly have $E''(A) \subseteq E'(A)$. As above, we let $E_{\mathrm{mod}}(A)$ denote the smallest $\Symt$-sub\textit{module} of $\Frac(A)$ containing $A$ and closed under the Demazure operators; observe that $E_{\mathrm{mod}}(A)$ generates $E'(A)$ as an algebra.  By Corollary \ref{Trivial Part of EmodM is Dw0 of M}, $\Emod(A)^W = D_{w_0}(A)$, so $\Emod'(A)^W \subseteq E''(A)$. Since $E''(A)$ is a $\Symt$-algebra, we deduce that all elements which are sums of those of the form $pe'$ for $p \in \Symt$ and $e' \in E'(A)^W$ lie in $E''(A)$. Since the multiplication map $\Symt \otimes_{\Symt^W} E_{\mathrm{mod}}(A)^W \to E_{\mathrm{mod}}(A)$ is an isomorphism by Lemma \ref{Closed Under Demazure Implies Recoverable from GIT Quotient}, we deduce that \textit{all} elements of $E'(A)$ are sums of those of the form $pe'$ and so $E'(A) \subseteq E''(A)$ as desired. 
\end{proof}

\begin{Remark}
    There are examples of algebras $A$ as in Definition \ref{Demazure envelope of algebra definition} for which the smallest subalgebra containing $A$ and $D_{s}(a)$ for all simple reflections $s \in W$ is strictly smaller than the algebraic Demazure envelope of $A$.
\end{Remark}

\section{Weil Restriction and The Demazure Envelope}\label{Invariant Weil Restrictions Subsubsection} \newcommand{\LTdGeneralizationforWeilRestriction}{\mathbf{t}}
\newcommand{\LCGeneralizationForWeilRestriction}{\mathbf{c}}
\subsection{Weil Restriction}Assume we are given a finite flat morphism of affine schemes $p: \LTdGeneralizationforWeilRestriction \to \LCGeneralizationForWeilRestriction$. Recall that, to any affine scheme $X$ over $\LTdGeneralizationforWeilRestriction$, there exists a scheme $\Res_{p}(X)$ and a map $c(X): \Res_{p}(X)_{\LTdGeneralizationforWeilRestriction} \to X$ known as the \textit{counit map} such that, for any affine scheme $S$ over $\LCGeneralizationForWeilRestriction$, the composite of the maps \begin{equation}\label{General adjunction for Weil restriction Generalization}\Maps_{\LCGeneralizationForWeilRestriction}(S, \Res_{p}(X)) \xrightarrow{\LTdGeneralizationforWeilRestriction \times_{\LCGeneralizationForWeilRestriction} -} \Maps_{\LTdGeneralizationforWeilRestriction}(S_{\LTdGeneralizationforWeilRestriction}, \Res_{\phi}(X)_{\LTdGeneralizationforWeilRestriction}) \xrightarrow{c(X) \circ -} \Maps_{\LTdGeneralizationforWeilRestriction}(S_{\LTdGeneralizationforWeilRestriction}, X) \end{equation} is a bijection, see for example \cite[Section 7.6]{BoschLutkebohmertRaynaudNeronModels}. From this, one can check that the Weil restriction, considered with its associated counit map, is unique up to unique isomorphism. We also recall (see \cite[Section 7.6]{BoschLutkebohmertRaynaudNeronModels}) that $\Res_{p}(X)$ is smooth over $\LCGeneralizationForWeilRestriction$ if $X$ is smooth over $\LTdGeneralizationforWeilRestriction$. When the map $p$ is clear from context, we will omit it from the notation and use the shorthand $\Res(X)$ for $\Res_{p}(X)$.

Let us now assume that some finite group $\Gamma$ acts on $\LTdGeneralizationforWeilRestriction$ for which $p$ is $\Gamma$-equivariant, and assume that $X$ admits a compatible $\Gamma$-action. In this case, one can define $\Gamma$-action on $\Res(X)$ for which the counit map is compatible with the action of $\Gamma$: indeed, to every $\gamma \in \Gamma$ we let $\overline{\gamma}$ denote the preimage of the map $y \mapsto \gamma \cdot c(X)(y)$ in $\Maps_{\LTdGeneralizationforWeilRestriction}(\Res(X)_{\LTdGeneralizationforWeilRestriction}, X)$ under the bijection given by the composite of the maps in \labelcref{General adjunction for Weil restriction Generalization}. If $S$ is a scheme over $\LCGeneralizationForWeilRestriction$ equipped with a compatible action of $\Gamma$, one can check that the maps in \labelcref{General adjunction for Weil restriction Generalization} are $\Gamma$ equivariant. In particular, we obtain bijections \begin{equation}\label{General adjunction for fixed points of Weil restriction generalization} \Maps_{\LCGeneralizationForWeilRestriction}^\Gamma(S, \Res(X)) \xrightarrow{\sim} \Maps_{\LTdGeneralizationforWeilRestriction}^\Gamma(\LTdGeneralizationforWeilRestriction \times_{\LCGeneralizationForWeilRestriction} S, X)\end{equation} given by the composite of the maps in \labelcref{General adjunction for Weil restriction Generalization}.

In what follows, we will use the following elementary observation which is proved in \cite[Section 7.6]{BoschLutkebohmertRaynaudNeronModels} and not difficult to prove directly; see also \cite[Lemma 2.3]{HekkingKhanRydhDeformationtotheNormalBundleandBlowUpsViaDerivedWeilRestrictions} for the derived variant:

\begin{Proposition}\label{Base Change of Weil Restriction}
Assume that we have a map $\LCGeneralizationForWeilRestriction' \to \LCGeneralizationForWeilRestriction$ of affine schemes, let $\LTdGeneralizationforWeilRestriction' := \LTdGeneralizationforWeilRestriction \times_{\LCGeneralizationForWeilRestriction} \LCGeneralizationForWeilRestriction'$, and denote by $p': \LTdGeneralizationforWeilRestriction' \to \LCGeneralizationForWeilRestriction'$ the projection map. Then the counit map \[(\Res_{p}(X) \times_{\LCGeneralizationForWeilRestriction} \LCGeneralizationForWeilRestriction') \times_{\LCGeneralizationForWeilRestriction'} \LTdGeneralizationforWeilRestriction' \cong (\Res_{p}(X) \times_{\LCGeneralizationForWeilRestriction} \LTdGeneralizationforWeilRestriction) \times_{\LTdGeneralizationforWeilRestriction } \LTdGeneralizationforWeilRestriction' \xrightarrow{c(X) \times \mathrm{id}_{\LTdGeneralizationforWeilRestriction'}} X_{\LTdGeneralizationforWeilRestriction}\] induces a canonical isomorphism \[\Res_{p}(X) \times_{\LCGeneralizationForWeilRestriction} \LCGeneralizationForWeilRestriction' \cong \Res_{p'}(X \times_{\LTdGeneralizationforWeilRestriction} \LTdGeneralizationforWeilRestriction')\] of schemes over $\LCGeneralizationForWeilRestriction'$. If $\LTdGeneralizationforWeilRestriction, \LCGeneralizationForWeilRestriction, \LCGeneralizationForWeilRestriction'$, and $X$ have $\Gamma$-actions for which the maps $\LCGeneralizationForWeilRestriction' \to \LCGeneralizationForWeilRestriction$, $p$, and $X \to \LTdGeneralizationforWeilRestriction$ are $\Gamma$-equivariant, then this isomorphism is compatible with the $\Gamma$-action.
\end{Proposition}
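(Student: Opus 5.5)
We prove the statement directly from the universal property \labelcref{General adjunction for Weil restriction Generalization}, using uniqueness of Weil restriction up to unique isomorphism. Set $\mathbf{R}' := \Res_{p'}(X \times_{\mathbf{t}} \mathbf{t}')$ and $\mathbf{R} := \Res_p(X) \times_{\mathbf{c}} \mathbf{c}'$. The plan is to show that $\mathbf{R}$, together with the map $\mathbf{R} \times_{\mathbf{c}'} \mathbf{t}' \to X \times_{\mathbf{t}} \mathbf{t}'$ built from $c(X)$, represents the functor
\[ S \mapsto \Maps_{\mathbf{t}'}(S \times_{\mathbf{c}'} \mathbf{t}', X \times_{\mathbf{t}} \mathbf{t}') \]
on affine $\mathbf{c}'$-schemes $S$. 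This identifies $\mathbf{R}$ with $\mathbf{R}'$.

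First I fix notation for the counit. Note that $\mathbf{R} \times_{\mathbf{c}'} \mathbf{t}' \cong \Res_p(X)_{\mathbf{t}} \times_{\mathbf{t}} \mathbf{t}'$, by associativity of fiber products and $\mathbf{t}' = \mathbf{t} \times_{\mathbf{c}} \mathbf{c}'$. Using this, define the candidate counit as $c(X) \times \mathrm{id}_{\mathbf{t}'}$, which lands in $X \times_{\mathbf{t}} \mathbf{t}'$. (The statement writes the target as $X_{\mathbf{t}}$; I read this as $X \times_{\mathbf{t}} \mathbf{t}'$.)

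Next, for an affine $\mathbf{c}'$-scheme $S$, I build a chain of natural bijections.
\begin{enumerate}
\item $\Maps_{\mathbf{c}'}(S, \mathbf{R}) = \Maps_{\mathbf{c}}(S, \Res_p(X))$. This is the universal property of the fiber product, with $S$ viewed as a $\mathbf{c}$-scheme via $\mathbf{c}' \to \mathbf{c}$.
\item $\Maps_{\mathbf{c}}(S, \Res_p(X)) \cong \Maps_{\mathbf{t}}(S \times_{\mathbf{c}} \mathbf{t}, X)$. This is \labelcref{General adjunction for Weil restriction Generalization}.
\item $S \times_{\mathbf{c}} \mathbf{t} \cong S \times_{\mathbf{c}'} \mathbf{t}'$ as $\mathbf{t}'$-schemes, and therefore
\[ \Maps_{\mathbf{t}}(S \times_{\mathbf{c}'} \mathbf{t}', X) = \Maps_{\mathbf{t}'}(S \times_{\mathbf{c}'} \mathbf{t}', X \times_{\mathbf{t}} \mathbf{t}'), \]
again by the universal property of the fiber product.
\end{enumerate}

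I then check that the composite bijection is exactly ``base change to $\mathbf{t}'$, then compose with $c(X) \times \mathrm{id}_{\mathbf{t}'}$''. Unwinding the definitions, a map $f: S \to \mathbf{R}$ goes to $c(X) \circ (f \times \mathrm{id}_{\mathbf{t}})$ paired with the projection to $\mathbf{t}'$. This is the same as $(c(X) \times \mathrm{id}_{\mathbf{t}'}) \circ (f \times_{\mathbf{c}'} \mathrm{id}_{\mathbf{t}'})$. Keeping track of these identifications is the only real bookkeeping, and I expect it to be the main (mild) obstacle. Once it is done, the uniqueness of Weil restriction with its counit gives the canonical isomorphism $\mathbf{R} \cong \mathbf{R}'$ over $\mathbf{c}'$.

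Finally, for the $\Gamma$-equivariance: the action on each Weil restriction is defined by transporting $y \mapsto \gamma \cdot c(y)$ across the adjunction bijection. Each bijection in the chain above is $\Gamma$-equivariant, because all structure maps are $\Gamma$-equivariant and fiber products of $\Gamma$-equivariant maps carry the diagonal action. Hence the isomorphism intertwines the two actions: conjugating $c(X) \times \mathrm{id}_{\mathbf{t}'}$ by $\gamma$ corresponds to $\overline{\gamma} \times \mathrm{id}$. Uniqueness in the universal property then forces the two $\Gamma$-actions on $\mathbf{R} \cong \mathbf{R}'$ to agree.
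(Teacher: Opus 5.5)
Your proof is correct and is essentially the argument the paper has in mind: the paper gives no proof of its own, citing \cite[Section 7.6]{BoschLutkebohmertRaynaudNeronModels} and remarking that the statement is not difficult to prove directly, and your Yoneda argument is exactly that direct verification (fiber-product adjunction, then the universal property of $\Res_p$, then $S\times_{\mathbf{c}}\mathbf{t}\cong S\times_{\mathbf{c}'}\mathbf{t}'$, followed by uniqueness of the Weil restriction with its counit, and equivariance of $c(X)\times\mathrm{id}_{\mathbf{t}'}$ for the $\Gamma$-part). You are also right that the target of the counit in the statement should read $X\times_{\mathbf{t}}\mathbf{t}'$ rather than $X_{\mathbf{t}}$.
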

We will be interested in the special case that $\LTdGeneralizationforWeilRestriction = \LTd$ and $\LCGeneralizationForWeilRestriction = \LC$, the counit map of the invariant Weil restriction is an isomorphism generically on $\LTd$:

\begin{Proposition}\label{Counit Map for Invariant Weil Restriction is Generically an Isomorphism}
Let $\LC_{\mathrm{reg}}$ denote the image of the complement of the coroot hyperplanes $\LT^*_{\mathrm{reg}}$ in $\LTd$S. Then for any affine scheme $X$ over $\LTd$ equipped with a compatible $W$-action, the counit map $c(X): \Res^W(X)_{\LTd} \to X$ is an isomorphism over $\LT^*_{\mathrm{reg}}$.
\end{Proposition}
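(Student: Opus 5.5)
We reduce to the base change along $\LC_{\mathrm{reg}} \subseteq \LC$ and then use that $\LTd_{\mathrm{reg}} \to \LC_{\mathrm{reg}}$ is a $W$-torsor. Both $\Res^W$ and the counit map commute with base change along the open immersion $\LC_{\mathrm{reg}} \to \LC$. For $\Res$ this is Proposition \ref{Base Change of Weil Restriction}, applied $W$-equivariantly. For the fixed-point subscheme it follows from the functorial description \labelcref{functor}, since a morphism $S \to \LC$ factors through $\LC_{\mathrm{reg}}$ exactly when $\LTd \times_{\LC} S$ lands in $\LTd_{\mathrm{reg}}$. Because $q^{-1}(\LC_{\mathrm{reg}}) = \LTd_{\mathrm{reg}}$, it therefore suffices to prove the following. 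Let $X' := X \times_{\LTd} \LTd_{\mathrm{reg}}$ and $p' : \LTd_{\mathrm{reg}} \to \LC_{\mathrm{reg}}$. Then the counit $\LTd_{\mathrm{reg}} \times_{\LC_{\mathrm{reg}}} \Res^W_{p'}(X') \to X'$ is an isomorphism.

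The key input is that $p'$ is a $W$-torsor, since $W$ acts freely on the complement of the coroot hyperplanes. In other words, $W \times \LTd_{\mathrm{reg}} \to \LTd_{\mathrm{reg}} \times_{\LC_{\mathrm{reg}}} \LTd_{\mathrm{reg}}$, $(w,t)\mapsto (wt,t)$, is an isomorphism and $p'$ is finite étale. By faithfully flat (fpqc) descent along $p'$, the functor $S \mapsto \LTd_{\mathrm{reg}} \times_{\LC_{\mathrm{reg}}} S$ is an equivalence between affine schemes over $\LC_{\mathrm{reg}}$ and affine $W$-equivariant schemes over $\LTd_{\mathrm{reg}}$. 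The quasi-inverse is $Y \mapsto Y/W = \Spec(\O(Y)^W)$. Algebraically, this is Proposition \ref{Fully faithfulness of tensoring up even for algebra objects} localized at $\Delta^2$, together with the Galois-descent fact that $\O(\LTd_{\mathrm{reg}}) \otimes_{\O(\LC_{\mathrm{reg}})} B^W \to B$ is an isomorphism for every $W$-equivariant $\O(\LTd_{\mathrm{reg}})$-algebra $B$.

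With this equivalence in hand, $\Res^W_{p'}(X')$ is computed directly. Its functor of points sends $S$ to $\Maps^W_{\LTd_{\mathrm{reg}}}(\LTd_{\mathrm{reg}} \times_{\LC_{\mathrm{reg}}} S, X')$. By the equivalence, this is $\Maps_{\LC_{\mathrm{reg}}}(S, X'/W)$, so $\Res^W_{p'}(X') \cong X'/W$. Under this identification the counit is the descent isomorphism $\LTd_{\mathrm{reg}} \times_{\LC_{\mathrm{reg}}} (X'/W) \to X'$. To make this rigorous, I will check that the counit is the adjunction counit. By the Yoneda description \labelcref{fun intro}, the counit corresponds to the identity map of $\Res^W$, and the adjunction counit corresponds to the identity of $X'/W$. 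Uniqueness of representing objects then identifies the two maps.

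I expect the main obstacle to be the descent step, specifically the claim that $\O(\LTd_{\mathrm{reg}}) \otimes_{\O(\LC_{\mathrm{reg}})} B^W \to B$ is an isomorphism for every $B$, with no torsion-freeness hypothesis on $B$. I will prove it by base changing along the faithfully flat map $p'$. Using the torsor isomorphism, the statement becomes the corresponding one for $\O(W \times \LTd_{\mathrm{reg}})$-algebras with the induced $W$-action. Such algebras are induced modules $\prod_{w} B_1$, where $W$ permutes the factors, and for these the claim is immediate. Alternatively, I can cite standard Galois descent for finite étale Galois covers.
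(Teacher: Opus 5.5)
Your proof is correct, but it is organized differently from the paper's.

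\textbf{The paper's route.} The paper does not restrict along the open immersion $\LC_{\mathrm{reg}}\subseteq\LC$. It base changes along the étale cover $\LT^*_{\mathrm{reg}}\to\LC$ itself. It uses the cited fact that $\LTd\times_{\LC}\LTd$ is the union $\Gamma_W$ of the graphs of the $W$-action on all of $\LTd$, together with freeness of the $W$-action on $\LT^*_{\mathrm{reg}}$, to get $\LTd\times_{\LC}\LT^*_{\mathrm{reg}}\cong W\times\LT^*_{\mathrm{reg}}$. After this base change the torsor is split: $X$ becomes $\bigsqcup_w X_w$ with $W$ permuting the pieces simply transitively. A $W$-equivariant map from $W\times S$ is then just a map from $S$ to one piece, so the counit is visibly an isomorphism. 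The paper leaves both this final check and the descent of ``being an isomorphism'' along the faithfully flat map $\LT^*_{\mathrm{reg}}\to\LC_{\mathrm{reg}}$ implicit.

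\textbf{Your route.} You first restrict along the open immersion, which is elementary via the functor of points. You then move all the descent into a Galois-descent equivalence between affine schemes over $\LC_{\mathrm{reg}}$ and affine $W$-schemes over $\LT^*_{\mathrm{reg}}$. You prove that equivalence by the same splitting trick. So both arguments split the torsor by pulling back along $p'$: the paper does it once at the level of Weil restrictions, you do it at the level of modules and algebras.

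\textbf{What each buys.}
\begin{itemize}
\item Your approach needs only the torsor isomorphism over the regular locus, not the global scheme-theoretic identification $\LTd\times_{\LC}\LTd=\Gamma_W$. You should justify that isomorphism scheme-theoretically, for example because $q$ is flat and étale on $D(\Delta)$, so the fiber product is étale and hence reduced there; freeness on points alone gives only a bijection.
\item Your approach also yields the explicit identification $\Res^W(X)\times_{\LC}\LC_{\mathrm{reg}}\cong\Spec(\O(X)[\Delta^{-1}]^W)$ with the counit equal to the descent isomorphism. This is exactly what the paper invokes afterwards when constructing $\ResFreeClosureW(X)$.
\item The paper's approach is shorter, since it avoids setting up descent for modules and algebras and reduces immediately to the trivial split case.
\end{itemize}
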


\begin{proof}
For each $w \in W$, let $\Gamma_w$ denote the graph of the action map $w \in W$, i.e. the closed embedding $(\mathrm{act}_w, \mathrm{id}_{\LTd}): \LTd \to \LTd \times \LTd$. Denote by $\Gamma_W$ the union of these graphs, or in other words the closed subscheme of $\LTd \times \LTd$ cut out by the intersection of the ideals associated to each $\Gamma_w$. We observe that the squares, and thus the rectangle, of the diagram \begin{equation*}\xymatrix@R+2em@C+2em{W \times \LT^*_{\mathrm{reg}} \textcolor{white}{w} \ar[r]^{\textcolor{white}{w}(w, \varphi) \mapsto (w\varphi, \varphi)} \ar[d]^{(w, \varphi) \mapsto w\varphi} & \Gamma_W \ar[r]^{(wx, x) \mapsto x} \ar[d]^{(wx, x) \mapsto wx}  & \LTd \ar[d]\\
\LT^*_{\mathrm{reg}}  \ar[r]^{\subseteq} & \LTd \ar[r] & \LC 
  }\end{equation*} are Cartesian: indeed, the fact that the rightmost square square is Cartesian is standard (see for example \cite[Proposition A.2]{GannonDescentToTheCoarseQuotientForPseudoreflectionAndAffineWeylGroups}) and the leftmost square follows from the fact that $W$ acts freely on $\LT^*_{\mathrm{reg}}$. 

By comparing the counit maps of Proposition \ref{Base Change of Weil Restriction}, we deduce that it suffices to show that \[\Res^W_{W \times \LT^*_{\mathrm{reg}} \xrightarrow{\mathrm{act}}\LT^*_{\mathrm{reg}}}(X_{W \times \LT^*_{\mathrm{reg}}})_{W \times \LT^*_{\mathrm{reg}}} \to X_{W \times \LT^*_{\mathrm{reg}}}\] is an isomorphism, where, as above, we use the subscript $Y_{W \times \LT^*_{\mathrm{reg}}}$ to denote the base change of a scheme $Y$ over $\LTd$ to $W \times \LT^*_{\mathrm{reg}}$. However, it is not difficult to check that for \textit{any} finite flat morphism of affine schemes $p: \LTdGeneralizationforWeilRestriction \to \LCGeneralizationForWeilRestriction$ for which the $W$-action on $\LTdGeneralizationforWeilRestriction$ is free, the counit map $\Res_p^W(X) \to X$ is an isomorphism for any $W$-scheme $X$ over $\LTdGeneralizationforWeilRestriction$. 
\end{proof} 
\setcounter{subsection}{2}
\subsection{Proof of Theorem \ref{intro thm}}Hereafter, we assume $X = \Spec(A)$ is any affine $W$-scheme equipped with a $W$-equivariant dominant map to $\LTd$ for $A$ an integral domain. 

\subsubsection*{Universal Property of the Demazure Envelope}We first show that the Demazure envelope has the same universal property as $\Res^W(\Spec(A))$ when we restrict to torsion free $\Symt^W$-modules. More precisely, we consider the map \begin{equation*}\label{Pull back from Demazure envelope map}\Maps_{\mathrm{CAlg}(\Symt\mmod)}^W(E(A), \Symt \otimes_{\Symt^W} B) \to \Maps_{\mathrm{CAlg}(\Symt\mmod^W)}(A, \Symt \otimes_{\Symt^W} B)\end{equation*} obtained by pullback by the inclusion $A \xhookrightarrow{} E(A)$, where $\mathrm{CAlg}(\Symt\mmod)$ denotes the category of commutative $\Symt$-algebras. In the language of affine schemes, we are therefore considering the map \begin{equation}\label{Pull back from Demazure envelope map for affine schemes}\Maps^W_{\LTd}(\LTd \times_{\LC} \Spec(B), \Spec(E(A))) \to \Maps^W_{\LTd}(\LTd \times_{\LC} \Spec(B), \Spec(A)) \end{equation} obtained by post composing with the map $\Spec(E(A)) \to \Spec(A)$. 
\begin{Lemma}\label{Demazure Envelope Has Adjunction Property for Torsion Free Schemes Algebra Version}
Assume $A$ is an integral domain equipped with an injective map $\Symt \to A$ and a compatible $W$-action, and let $B$ denote a $\Symt^W$-algebra. The map \labelcref{Pull back from Demazure envelope map for affine schemes} is injective. Moreover, this map is surjective if $B$ is torsion free as a $\Symt^W$-module. 
\end{Lemma}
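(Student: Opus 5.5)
Set $C := \Symt \otimes_{\Symt^W} B$. By Proposition \ref{env prop}(1), $E(A)$ is generated as an algebra by $A$ together with the elements $D_{w_0}(a)$ for $a \in A$. The plan is to show that a $W$-equivariant $\Symt$-algebra map $\phi: A \to C$ determines, and under torsion-freeness admits, a value on each $D_{w_0}(a)$.

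\textbf{Injectivity.} By \labelcref{Explicit Formula for Dw0}, every $e = D_{w_0}(a)$ satisfies
\[\Delta\, e = \sum_{w}(-1)^{\ell(w)} w(a).\]
Let $\tilde\phi: E(A) \to C$ be any $W$-equivariant $\Symt$-algebra extension of $\phi$. Then $\tilde\phi(e)$ lies in $C^W = B$, because $e$ is $W$-invariant. It also satisfies $\Delta\,\tilde\phi(e) = \sum_w (-1)^{\ell(w)} w\,\phi(a)$.

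As a $B$-module, $C$ is $\Symt \otimes_{\Symt^W} B$, and $\Symt$ is free over $\Symt^W$ with a basis that can be chosen to contain $\Delta$ (or at least one in which $\Delta$ is part of a basis of the sign-isotypic part, which is $\Delta\cdot\Symt^W$). Hence the map $B \to C$, $b \mapsto \Delta\otimes b$, is injective, with no hypothesis on $B$. So $\tilde\phi(e)$ is uniquely determined. Since these elements together with $A$ generate $E(A)$, the extension $\tilde\phi$ is unique. This proves that \labelcref{Pull back from Demazure envelope map for affine schemes} is injective.

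\textbf{Surjectivity when $B$ is torsion free.} Now $C$ is a torsion-free $\Symt$-module. To see this, note that $C$ is flat over $B$ and free of rank $|W|$, and that $\Symt^W \to \Symt$ is finite, so a nonzero $p\in\Symt$ divides a nonzero norm $N(p) \in \Symt^W$. Therefore $C \into \Frac(\t)\otimes_{\Symt} C$.

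Extend $\phi$ to a $W$-equivariant algebra map
\[\phi_{\Frac}: \Frac(\t)\otimes_{\Symt}A \to \Frac(\t)\otimes_{\Symt} C.\]
The map $\phi_{\Frac}$ commutes with every $D_s$, since $D_s$ is built from the $W$-action and from multiplication by $\frac{1}{\check\alpha}$. Consequently $\phi_{\Frac}(E(A))$ lies in the Demazure envelope of $\phi(A)$ inside $\Frac(\t)\otimes_{\Symt} C$.

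So it suffices to show that $C$ is already closed under the Demazure operators. By Proposition \ref{Fully faithfulness of tensoring up even for algebra objects}, $C$ lies in the essential image, so $C \cong \Symt\otimes_{\Symt^W} C^W$. By Proposition \ref{Forgetful Functor form nilHeckeAlgebra to S rtimes W mod fully faithful}(1), $C$ is therefore an $\nh$-module. The $\nh$-action on $C$ is the restriction of the one on $\Frac(\t)\otimes C$, by the uniqueness of the extension to $\Frac(\t)\otimes_{\Symt} C$ mentioned in the introduction. Explicitly, $D_s(p\otimes b) = D_s(p)\otimes b$.

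Hence the restriction $\phi_{\Frac}|_{E(A)}$ is the desired $W$-equivariant $\Symt$-algebra extension. Its agreement on $A$ and its multiplicativity are automatic.

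\textbf{Main obstacle.} The delicate points are the two torsion statements: injectivity of multiplication by $\Delta$ on $B\subset C$ for arbitrary $B$, and torsion-freeness of $C$ over $\Symt$ when $B$ is $\Symt^W$-torsion free. I would handle both via freeness of $\Symt$ over $\Symt^W$ (Chevalley) and the norm argument. For the first, I would use that $\Delta$ generates the sign-isotypic summand $\Symt^{\mathrm{sign}} = \Delta\Symt^W$, which is a free direct summand of rank one.
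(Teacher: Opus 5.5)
Your proof is correct. It splits into one half that matches the paper and one that takes a different route.

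\textbf{Surjectivity.} Here you follow the paper. You extend $\phi$ to $\Frac(\t)\otimes_{\Symt}A$, note that this extension commutes with each $D_s$, and use that $C=\Symt\otimes_{\Symt^W}B$ is closed under Demazure operators inside $\Frac(\t)\otimes_{\Symt}C$. You also improve on the paper in two small ways. You drop its separate well-definedness check for $\tilde h$, which is redundant once one works in the fraction field. And you actually prove that $C$ is $\Symt$-torsion free, via the norm $\prod_w w(p)\in\Symt^W$; the paper only asserts this.

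\textbf{Injectivity.} Here the two arguments differ.

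\emph{Paper's route.} It appeals to the full faithfulness of $\nh\mmod\to\Symt\mmod^W$ (Proposition \ref{Forgetful Functor form nilHeckeAlgebra to S rtimes W mod fully faithful}), with $C$ made into an $\nh$-module through its essential-image description. This shows that any two extensions intertwine all Demazure operators, even when $C$ has $\Symt$-torsion and the fraction-field formula for $D_s$ is unavailable. It then compares the extensions on the algebra generators $D_w(a)$.

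\emph{Your route.} You determine the value on each generator $D_{w_0}(a)$ directly. This generator is $W$-invariant, so its image lies in $C^W=B$. On $C^W$, multiplication by $\Delta$ is injective for arbitrary $B$, because $\Symt^{\mathrm{sign}}=\Delta\,\Symt^W$ is a free rank-one $\Symt^W$-direct summand of $\Symt$.

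\emph{What each buys.} Your argument avoids the nil-Hecke full-faithfulness input entirely. The cost is that it needs the stronger half of Proposition \ref{env prop}(i): generation by $A$ and $D_{w_0}(A)$, not merely by all $D_w(a)$. This is because cancelling $\Delta$ only works for $W$-invariant generators, whose images land in $C^W$. The paper's argument needs only generation by the $D_w(a)$. It also yields the stronger conclusion that every $W$-equivariant $\Symt$-algebra extension commutes with the Demazure operators.
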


\begin{proof}
    We first verify injectivity: assume $f, g: E(A) \to \Symt \otimes_{\Symt^W} B$ are $\Symt$-algebra morphisms compatible with the $W$-action such that the restrictions to $A$ agree. Since the forgetful functor from modules for the nil-Hecke ring to modules for $\Symt \rtimes W$ is fully faithful (Proposition \ref{Forgetful Functor form nilHeckeAlgebra to S rtimes W mod fully faithful}) we deduce that $f$ and $g$ are compatible with the Demazure operators. By Proposition \ref{env prop}(1), any element of $E(A)$ has the form $\sum_w D_w(a_w)$ for some $a_w \in A$. Therefore \[f\big(\sum_w D_w(a_w)\big) = \sum_w D_w(f(a_w)) = \sum_w D_w(g(a_w)) = g\big(\sum_w D_w(a_w)\big)\] as desired.

    Now, assuming that $\Symt \otimes_{\Symt^W} B$ is torsion free as a $\Symt^W$-module, we verify surjectivity. Assume $h: A \to \Symt \otimes_{\Symt^W} B$ is a $W$-equivariant map of $\Symt$-algebras. Define the map $\tilde{h}$ by the formula \[\tilde{h}\big(\sum_wD_w(a_w)\big) := \sum_wD_w(h(a_w)).\] Observe that, if $\sum_wD_w(a_w) = \sum_wD_w(a_w')$ then there exists some $n$ such that $\Delta^n\sum_wD_w(a_w) \in A$ and so \begin{equation}\label{Pi to n times Maps of Demazures Agree}\Delta^n\sum_wD_w(h(a_w)) = h\big(\sum_w\Delta^nD_w(a_w)\big) = h\big(\sum_w\Delta^nD_w(a_w')\big) = \Delta^n\sum_wD_w(h(a_w')).\end{equation} As $B$ is $\Symt^W$-torsion free, $\Symt \otimes_{\Symt^W} B$ is $\Symt$-torsion free, and so from \labelcref{Pi to n times Maps of Demazures Agree} we deduce that $\sum_wD_w(h(a_w)) = \sum_wD_w(h(a_w'))$ and thus $\tilde{h}$ is well defined. 
    
     Let $Q$ denote the field of fractions of $\Symt$. Then $h$ has an extension to an algebra map \[f: Q\otimes_{\Symt} A \to
 Q\otimes_{\Symt}\Symt\otimes_{\Symt^W} B  \cong  Q\otimes_{\Sym(t)^W} B\] 
 Since $B$ is torsion free as a $\Symt^W$-algebra, the map
$\Sym(t) \otimes_{\Sym(t)^W} B \to Q\otimes_{\Sym(t)^W} B$ is injective 
and we may view $\Sym(t) \otimes_{\Sym(t)^W} B$ as a
 subalgebra of $Q\otimes_{\Sym(t)^W} B$.
 Since $E(A)$ is a subalgebra of $Q\otimes_{\Sym(t)} A$, it suffices to show that $f(E(A))$ is contained in the subalgebra
$\Sym(t) \otimes_{\Sym(t)^W} B$. However, this follows since every element of $E(A)$ is a sum of terms of the form $D_{w_1}(a_1)...D_{w_n}(a_n)$ and \[f(D_{w_1}(a_1)...D_{w_n}(a_n)) = f(D_{w_1}(a_1))...f(D_{w_n}(a_n)) = \tilde{h}(D_{w_1}(a_1))...\tilde{h}(D_{w_n}(a_n))\] lies in $\Symt \otimes_{\Symt^W}B$ by our previous analysis. 
\end{proof}

\subsubsection*{Construction of $\Res^W_{\circ}(X)$}\label{Construction of Component}Observe that, by Proposition \ref{Counit Map for Invariant Weil Restriction is Generically an Isomorphism}, the counit map \[\Res^W(X)_{\LTd} = \LTd \times_{\LC} \Res^W(X)\to X\] is an isomorphism away from the root hyperplanes.  In particular, $\Res^W(X)$ contains an open subset isomorphic to $\Spec(A[\frac{1}{\Delta^2}])^W$, which is reduced and irreducible if $A$ is an integral domain. We let $\ResFreeClosureW(X)$ denote the closure of this open subset. 

We now construct a canonical map \begin{equation}\label{Map from universal property} \Spec(E(A)^W) \to \Res^W(X)\end{equation} as follows. Since we have an isomorphism $e: \Spec(E(A)) \xrightarrow{\sim} \LTd \times_{\LC} \Spec(E(A)^W)$ by Lemma \ref{Closed Under Demazure Implies Recoverable from GIT Quotient}, the universal property of Weil restriction gives a map $f: \Spec(E(A)^W) \to \Res^W(X)$ over $\LC$ whose base change to $\LTd$ commutes with the counit map $c(X)$ in the natural way. Using this map, we may state the following proposition, which, combined with Proposition \ref{env prop}, proves the first assertion in Theorem \ref{intro thm}. 

\begin{Proposition}\label{ResFreeClosureW is Spec of W Invariants of Demazure Operator}
Assume $X = \Spec(A)$ is any reduced and irreducible affine $W$-scheme equipped with a compatible map to $\LTd$. The map \labelcref{Map from universal property} induces an isomorphism $\Spec(E(A)^W) \xrightarrow{\sim} \ResFreeClosureW(X)$. 
\end{Proposition}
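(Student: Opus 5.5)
The map $f:\Spec(E(A)^W)\to \Res^W(X)$ should be a closed immersion whose image is $\ResFreeClosureW(X)$. I will show this in three steps: first that $f$ is a closed immersion, then that its image is irreducible and contains the dense open subset $\Spec(A[\frac{1}{\Delta^2}])^W$, and finally that the closed subscheme it cuts out is reduced with the correct support.

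\emph{Step 1: $f$ is a closed immersion.} Write $\Res^W(X)=\Spec(C)$ with $C$ an $\O(\LC)$-algebra, so $f$ corresponds to a map $\phi: C\to E(A)^W$. I want $\phi$ to be surjective. Base change along $\LTd\to\LC$ gives $\Symt\otimes_{\Symt^W}C\to \Symt\otimes_{\Symt^W}E(A)^W\cong E(A)$, using the isomorphism $e$. The composite
\[A\to \Symt\otimes_{\Symt^W}C\to E(A)\]
is the inclusion $A\subseteq E(A)$, because the base change of $f$ is compatible with the counit $c(X)$. Since $\Symt$ is faithfully flat over $\Symt^W$, it suffices to show that $\Symt\otimes C\to E(A)$ is surjective.

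Its image $I$ is a $W$-stable subalgebra containing $A$. It is also of the form $\Symt\otimes_{\Symt^W}(\text{image of }C)$, so it lies in the essential image of Proposition \ref{Fully faithfulness of tensoring up even for algebra objects}. By Proposition \ref{Forgetful Functor form nilHeckeAlgebra to S rtimes W mod fully faithful}(1), $I$ is therefore an $\nh$-submodule, i.e. it is stable under the Demazure operators. By minimality of $E(A)$ this forces $I=E(A)$. Hence $\phi$ is surjective and $f$ is a closed immersion.

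\emph{Step 2: identify the image with $\ResFreeClosureW(X)$.} $E(A)$ is a subring of $\Frac(A)$, so it is a domain, and so is $E(A)^W$. Thus the image of $f$ is reduced and irreducible. After inverting $\Delta^2$, every $D_w$ becomes defined on $A[\frac{1}{\Delta}]$, so $E(A)[\frac{1}{\Delta^2}]=A[\frac{1}{\Delta^2}]$. Over $\LC_{\mathrm{reg}}$ the map $f$ is then compatible with the isomorphism of Proposition \ref{Counit Map for Invariant Weil Restriction is Generically an Isomorphism}, so it restricts to an isomorphism onto the open subset $\Spec(A[\frac{1}{\Delta^2}])^W$. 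Consequently the closed image of $f$ contains this open subset, hence contains its closure $\ResFreeClosureW(X)$. Conversely, the open subset is dense in the irreducible image of $f$, since $E(A)^W$ is a domain and $\Delta^2\neq 0$ in it. The two closed sets therefore coincide, and since both carry the reduced structure, $f$ is an isomorphism onto $\ResFreeClosureW(X)$.

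\emph{Main obstacle.} The delicate point is Step 1: checking that the $\Symt$-span of the image of $C$ really satisfies the hypothesis of Proposition \ref{Forgetful Functor form nilHeckeAlgebra to S rtimes W mod fully faithful}(1), which is that $\Symt\otimes_{\Symt^W}I^W\to I$ is an isomorphism, and is torsion free. I would handle this by viewing $I$ as $\Symt\otimes_{\Symt^W}\phi(C)$, which is injective into $E(A)$ by flatness. Then $I^W=\phi(C)$, by exactness of invariants (Proposition \ref{Fully faithfulness of tensoring up even for algebra objects}). If this fails, I would instead argue Demazure-stability of $I$ directly from Proposition \ref{env prop}(2): construct the extension $E(A)\to\Symt\otimes_{\Symt^W}\phi(C)$ with $B=\phi(C)$ torsion free, and deduce that the inclusion splits.
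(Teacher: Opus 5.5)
Your proof is correct, but it follows a different line from the paper's. The paper never shows directly that $f$ is a closed immersion. Instead it proves that $\Spec(E(A)^W)$ and $\ResFreeClosureW(X)$ represent the same functor on $\Symt^W$-algebras $B$ that are torsion free. For $\Spec(E(A)^W)$ this is the extension statement of Lemma \ref{Demazure Envelope Has Adjunction Property for Torsion Free Schemes Algebra Version}; for $\ResFreeClosureW(X)$ it is the observation that a map from such a $\Spec(B)$ into $\Res^W(X)$ factors through the closure of the regular part. The paper then concludes by a Yoneda argument inside the category of $\O(\LC)$-torsion-free affine schemes.

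You instead show that $\phi\colon C\to E(A)^W$ is surjective using only the minimality of $E(A)$, and then match supports via the generic isomorphism. The step you flag as the main obstacle is actually immediate. By the twisted Leibniz rule, $D_s(pe)=D_s(p)e$ for $p\in\Symt$ and $W$-invariant $e$, so $\Symt\cdot\phi(C)$ is visibly stable under every $D_s$. Neither Proposition \ref{Forgetful Functor form nilHeckeAlgebra to S rtimes W mod fully faithful}(1) nor your fallback through Proposition \ref{env prop}(ii) is needed.

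What your route buys: it avoids the torsion-free extension argument entirely, and it proves more. The canonical map $\Spec(E(A)^W)\to\Res^W(X)$ is a closed immersion for every such $A$. Since $\phi$ becomes an isomorphism after inverting $\Delta^2$, its kernel is $\Delta^2$-torsion, which gives the implication (C)$\Rightarrow$(A) of Theorem \ref{Comparison of ResW and Demazure Envelope} in one line. What the paper's route buys: it characterizes $\ResFreeClosureW(X)\cong\Spec(E(A)^W)$ by a universal property with respect to torsion-free test algebras. That same extension property is what drives its proof of (C$'$)$\Rightarrow$(A).
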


\begin{proof}
We claim that if $\mathcal{Y} \to X$ denotes either of the varieties $\Spec(E(A)^W)$ or $\ResFreeClosureW(X)$ as a scheme over $X$, then $\mathcal{Y}$ satisfies the following universal property: for any $\Symt^W$-algebra $B$ which is torsion free as a $\Symt^W$-module and any $W$-equivariant map $\varepsilon: \LTd \times_{\LC} \Spec(B) \to X$ of affine schemes over $\LTd$, there exists a unique map $\varphi: \Spec(B) \to \mathcal{Y}$ of schemes over $\LC$ such that $c \circ \varphi_{\LTd} = \varepsilon$, where $c$ is the counit map in the case that $\mathcal{Y} = \ResFreeClosureW(X)$ and is the composite $\Spec(A) \to \Spec(E(A)) \cong \LTd \times_{\LC} \Spec(E(A)^W)$ if $\mathcal{Y} = \Spec(E(A)^W)$. We have already shown this in the case where $\mathcal{Y}$ is $\Spec(E(A)^W)$ in Corollary \ref{Demazure Envelope Has Adjunction Property for Torsion Free Schemes Algebra Version}, and so we now examine the case when $\mathcal{Y} = \ResFreeClosureW(X)$. 

Assume we have a $\Symt^W$-algebra $B$ which is torsion free as a $\Symt^W$-module and a $W$-equivariant map $\varepsilon: \LTd \times_{\LC} \Spec(B) \to X$ of affine schemes over $\LTd$. Then by the usual universal property of $\Res^W(X)$, there exists a unique map $\varphi: \Spec(B) \to \Res^W(X)$ which is compatible with the counit map in the natural way; it suffices to prove that $\varphi$ factors through $\ResFreeClosureW(X)$. To this end, observe that every generic point of $\Spec(B)$ is contained in $\Spec(B[\frac{1}{\Delta^2}])$ since by assumption $B$ is torsion free as a $\Symt^W$-module. Therefore each generic point of $\Spec(B)$ is contained in the open subset $\Spec(A(\frac{1}{\Delta^2}))$ above. Therefore, since $\Spec(B)$ is the closure of these points, all of $\Spec(B)$ factors through the closure $\ResFreeClosureW(X)$ of this open subset, as desired. %

Now that we have established that both $\Spec(E(A)^W)$ and $\ResFreeClosureW(X)$ satisfy the above universal property and are both are affine schemes over $\LC$ such that the ring of functions on each are $\O(\LC)$-torsion free, our claim follows from the fact that objects which satisfy a universal property are unique up to unique isomorphism.
\end{proof}

\subsubsection*{Proof of Theorem \ref{intro thm}} In Proposition \ref{ResFreeClosureW is Spec of W Invariants of Demazure Operator}, we proved the first assertion of Theorem \ref{intro thm}. We now prove (i) and (ii) of Theorem \ref{intro thm}. We have already recalled above that if $X \to \LTd$ is smooth, then $\Res(X)$ is smooth. Then, following \cite[Lemme 2.4.1]{Ngo}, one uses the fact that the fixed points of an action of a finite group is smooth and computes the relevant differential to show that $\Res^W(X) \to \LC$ is smooth. In particular, the ring of functions on $\Res^W(X)$ is torsion free as an $\O(\LC)$-module. Thus, Theorem \ref{intro thm}(ii) follows from Theorem \ref{intro thm}(i). In turn, Theorem \ref{intro thm}(i) follows as a special case of the following Theorem, which is one of the main results of this paper:

\begin{Theorem}\label{Comparison of ResW and Demazure Envelope}Assume $A$ is an integral domain equipped with an injective map $\Symt \to A$ and a compatible $W$-action. The following are equivalent: \begin{enumerate}
    \item[(A)] The canonical map $\Spec(E(A)^W) \to \Res^W(\Spec(A))$ is an isomorphism. 
    \item[(A')] The base change of the canonical map \[\Spec(E(A)) \xrightarrow{\sim} \LTd\times_{\LC} \Spec(E(A)^W) \to \LTd\times_{\LC}\Res^W(\Spec(A))\] is an isomorphism.
    \item[(B)] The affine scheme $\Res^W(\Spec(A))$ is integral (i.e. its ring of functions is an integral domain) of dimension $\mathrm{dim}(A)$ and the map of rings $\Symt^W \to \O(\Res^W(\Spec(A)))$ is injective.
    \item[(B')] The affine scheme $\Res^W(\Spec(A))_{\LTd}$ is integral of dimension $\mathrm{dim}(A)$ and the ring map $\Symt \to A$ is injective.
    \item[(C)] The ring of functions on $\Res^W(\Spec(A))$ is torsion free as a $\Symt^W$-module.
    \item[(C')] The ring of functions on $\Res^W(\Spec(A))_{\LTd}$ is torsion free as a $\Symt$-module.
    \item[(D)] The map \labelcref{Pull back from Demazure envelope map for affine schemes} is surjective for any $\Symt^W$-algebra $B$.
\end{enumerate}
\end{Theorem}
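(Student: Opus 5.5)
The plan is to organize the equivalences around two ingredients already in hand. The first is Proposition \ref{ResFreeClosureW is Spec of W Invariants of Demazure Operator}, which identifies $\Spec(E(A)^W)$ with the irreducible component $\ResFreeClosureW(X)$. The second is the fully faithful base change of Proposition \ref{Fully faithfulness of tensoring up even for algebra objects}, which converts each unprimed statement into its primed counterpart.

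I first dispose of the primed/unprimed pairs. (A)$\Leftrightarrow$(A') holds because $\LTd\times_{\LC}(-)$ is fully faithful on affine schemes over $\LC$. It is also conservative: $\Symt$ is a free $\Symt^W$-module (Chevalley), so $\Symt\otimes_{\Symt^W}-$ is faithfully flat, and a map is an isomorphism if and only if its base change is. The same faithful flatness gives (C)$\Leftrightarrow$(C'). Concretely, if $N=\O(\Res^W X)$ then $\Symt\otimes_{\Symt^W}N$ is a free-rank-$|W|$ extension of scalars. Hence torsion for $\Symt^W$ in $N$ produces $\Symt$-torsion after base change. Conversely, a $\Symt$-torsion element $m$ with $pm=0$ is killed by $\prod_w w(p)\in\Symt^W$, reducing to $\Symt^W$-torsion in the flat base change. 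For (B)$\Leftrightarrow$(B'), I would argue that integrality of the base change forces integrality of the invariants, since $N=(\Symt\otimes N)^W\subseteq \Symt\otimes N$. In the other direction, once (A) holds, the base change is $\Spec E(A)$, which is a subring of $\Frac(A)$ and hence a domain. The dimension statements follow from finiteness of $\LTd\to\LC$.

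The main cycle is (A)$\Rightarrow$(B)$\Rightarrow$(C)$\Rightarrow$(A), with (D) attached at the end.

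(A)$\Rightarrow$(B): $E(A)^W$ is a domain containing $\Symt^W$, and it has the same fraction field as $A^W$, because $E(A)[\Delta^{-1}]=A[\Delta^{-1}]$. Hence its dimension equals $\dim A$.

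(B)$\Rightarrow$(C): a domain containing $\Symt^W$ is torsion free.

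(C)$\Rightarrow$(A): This is the main step. Let $N=\O(\Res^W X)$ be $\Symt^W$-torsion free. The proof of Proposition \ref{ResFreeClosureW is Spec of W Invariants of Demazure Operator} shows that every generic point of $\Spec N$ lies over $\LC_{\mathrm{reg}}$, where by Proposition \ref{Counit Map for Invariant Weil Restriction is Generically an Isomorphism} $\Res^W X$ agrees with $\Spec(A[\Delta^{-2}])^W$, which is irreducible. So $\Res^W X$ is irreducible with dense open $\Spec(A[\Delta^{-2}]^W)$. It remains to show that $N$ is reduced. I expect $N\to N[\Delta^{-2}]$ to be injective: $\Delta^2\in\Symt^W$ is a nonzero element and $N$ is torsion free, so nilpotents of $N$ inject into the reduced ring $A[\Delta^{-2}]^W$ and therefore vanish. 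Thus $\Res^W X=\ResFreeClosureW(X)\cong\Spec(E(A)^W)$.

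(A)$\Rightarrow$(D): The universal property (\ref{functor}) of $\Res^W X$ holds for all $B$. Under (A) this transfers to $\Spec E(A)^W$, and then Proposition \ref{Fully faithfulness of tensoring up even for algebra objects} rewrites maps $\Spec B\to\Spec E(A)^W$ as $W$-maps out of $\LTd\times_{\LC}\Spec B$. This gives surjectivity of (\ref{Pull back from Demazure envelope map for affine schemes}).

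(D)$\Rightarrow$(A): Take $B=N$ and the counit $\varepsilon=c(X)$. Surjectivity provides a lift $\LTd\times_\LC\Res^W X\to\Spec E(A)$, hence a map $g\colon\Res^W X\to\Spec E(A)^W$ over $\LC$. Injectivity from Lemma \ref{Demazure Envelope Has Adjunction Property for Torsion Free Schemes Algebra Version}, together with the universal property of $\Res^W$, shows that $g$ and the canonical map $f$ are mutually inverse. For $f\circ g=\mathrm{id}$, both sides are maps into $\Res^W X$ compatible with the counit. For $g\circ f=\mathrm{id}$, the target $E(A)^W$ is torsion free, so the injectivity statement applies.

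I expect the delicate point to be (C)$\Rightarrow$(A). The issue is making rigorous that torsion-freeness forces the scheme to coincide with the closure of its regular locus, including the reducedness just argued.
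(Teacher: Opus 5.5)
Your proposal is correct, but the two substantive implications are routed differently from the paper.

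\textbf{(C)$\Rightarrow$(A).} The paper applies the surjectivity half of Lemma~\ref{Demazure Envelope Has Adjunction Property for Torsion Free Schemes Algebra Version} with $B=N:=\O(\Res^W(\Spec A))$ to build $\varphi\colon\Res^W(\Spec A)\to\Spec(E(A)^W)$. It then shows $\varphi$ is inverse to the canonical map, using uniqueness in the Weil restriction adjunction and the injectivity half of that Lemma. You argue instead as follows:
\begin{itemize}
\item torsion-freeness gives an embedding $N\hookrightarrow N[\Delta^{-2}]\cong A[\Delta^{-2}]^W$, using Proposition~\ref{Counit Map for Invariant Weil Restriction is Generically an Isomorphism} and exactness of $W$-invariants;
\item hence $\Res^W(\Spec A)=\ResFreeClosureW(\Spec A)$;
\item then you quote Proposition~\ref{ResFreeClosureW is Spec of W Invariants of Demazure Operator}.
\end{itemize}
The injection into a domain already makes $N$ integral, so your separate irreducibility step is redundant, and you get (C)$\Rightarrow$(B) for free. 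This route makes transparent that (C) amounts to injectivity of $N\to N[\Delta^{-2}]$: the only possible failure is $\Delta$-power torsion, which is exactly the ideal of $\ResFreeClosureW(\Spec A)$. However, that Proposition was itself proved from the same Lemma, so the underlying input is the same as the paper's.

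\textbf{(D)$\Rightarrow$(A).} Your argument is essentially the paper's (C')$\Rightarrow$(A) argument. It uses surjectivity only at the single algebra $B=N$. The injectivity half of the Lemma holds for every $B$, so you do not need torsion-freeness of $E(A)^W$ there. The paper instead shows directly that $\Spec(E(A)^W)$ represents the functor \eqref{functor}.

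\textbf{(A)$\Rightarrow$(D).} You supply this implication, which the paper leaves implicit.

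\textbf{One small point.} For (B')$\Rightarrow$(B) you still need $\Symt^W\to N$ to be injective. The literal ``$\Symt\to A$'' in (B') is just the standing hypothesis, so either of the following fills the gap:
\begin{itemize}
\item read (B') as asserting injectivity of $\Symt\to\Symt\otimes_{\Symt^W}N$; or
\item observe that once $N$ is a domain, the nonempty (hence dense) open $\Spec(A[\Delta^{-2}]^W)$ dominates $\LC$.
\end{itemize}
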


\begin{proof}
    Certainly (A) $\implies$ (A'). Since taking $W$-invariants on the associated ring of functions is exact, we conversley have (A') $\implies$ (A). Now, (A) implies (B) since the fact that $\Symt \to A$ is an injection implies that $\Symt^W \to A^W$ is an injection, and $E(A)^W$ is an integral domain since it is the $W$-invariants of some subring of the field of fractions of the integral domain $A$. Similarly, (A') implies (B'). 

    We certainly have that (B') implies (C') since if $\O(\Res^W(\Spec(A))$ is not torsion free as a $\Symt^W$-module, then there exists some nonzero $f \in \O(\Res^W(\Spec(A))$ and some nonzero $x \in \Symt^W$ such that $\overline{x}f = 0$, where $\overline{x}$ is the image of $x$ in $\O(\Res^W(\Spec(A))$. If $\overline{x} = 0$, then the map $\Symt^W \to \O(\Res^W(\Spec(A))$ is not injective and, if $\overline{x}$ is nonzero, then $\O(\Res^W(\Spec(A))$ is not an integral domain. Similarly, (B) implies (C). 

    Now assume (C') holds. Then, by the translation of Lemma \ref{Demazure Envelope Has Adjunction Property for Torsion Free Schemes Algebra Version} to affine schemes, there exists a unique map $\varphi: \Res^W(\Spec(A)) \to \Spec(E(A)^W)$ such that its base change $\varphi_{\LTd}$ makes the diagram \begin{equation}\xymatrix@R+2em@C+2em{\Res^W(\Spec(A)) \ar[d]^{c(\Spec(A))} \ar[r]^{\varphi_{\LTd}} & \Spec(E(A)^W)_{\LTd}\\
\Spec(A) & \Spec(E(A)) \ar[l] \ar[u]_{\sim}
  }\end{equation} commute, where the upward pointing arrow is the isomorphism of Lemma \ref{Closed Under Demazure Implies Recoverable from GIT Quotient}. However, the universal property of $\Res^W(\Spec(A))$ gives a map $\varphi': \Spec(E(A)^W) \to \Res^W(\Spec(A))$ making the diagram \begin{equation}\xymatrix@R+2em@C+2em{\Res^W(\Spec(A)) \ar[d]^{c(\Spec(A))}  & \Spec(E(A)^W)_{\LTd} \ar[l]_{\varphi'_{\LTd}}\\
\Spec(A) & \Spec(E(A)) \ar[l] \ar[u]_{\sim}
  }\end{equation} commute, where the upward pointing arrow is as above. By the uniqueness assertion of the universal property of $\Res^W(\Spec(A))$, one can check that $\varphi'\varphi$ is the identity. By the uniqueness assertion in Lemma \ref{Demazure Envelope Has Adjunction Property for Torsion Free Schemes Algebra Version}, we deduce that $\varphi\varphi'$ is the identity as well. Therefore, (A) holds.

    Observe that (C') implies (C) by taking $W$-invariants. Conversely, (C) implies (C') since the tensor product by a flat morphism preserves the property of being torsion free, see for example \cite[Lemma 15.22.4]{StacksProject}.

    Now assume (D), i.e. that the map \labelcref{Pull back from Demazure envelope map} is surjective. Lemma \ref{Demazure Envelope Has Adjunction Property for Torsion Free Schemes Algebra Version} then implies that  \labelcref{Pull back from Demazure envelope map for affine schemes} is bijective. Observe moreover that we have isomorphisms \begin{equation}\label{Pf iso 1}
     \Maps_{\LC}(\Spec(B), \Spec(E(A)^W)) \xrightarrow{\sim} \Maps_{\LTd}^W (\LTd \times_{\LC} \Spec(B), \Spec(E(A)^W)_{\LTd})\tag{1}
    \end{equation} and \begin{equation}\label{Pf iso 2}
   \Maps_{\LTd}^W (\LTd \times_{\LC} \Spec(B), \Spec(E(A)^W)_{\LTd}) \xrightarrow{\sim}    \Maps_{\LTd}^W (\LTd \times_{\LC} \Spec(B), \Spec(E(A)) \tag{2}
    \end{equation} by Proposition \ref{Fully faithfulness of tensoring up even for algebra objects} and Lemma \ref{Closed Under Demazure Implies Recoverable from GIT Quotient} respectively. Combining the isomorphisms \labelcref{Pf iso 1}, \labelcref{Pf iso 2}, and \labelcref{Pull back from Demazure envelope map for affine schemes} we deduce an isomorphism \[\Maps_{\LC}(\Spec(B), \Spec(E(A)^W))  \xrightarrow{\sim} \Maps^W_{\LTd}(\LTd \times_{\LC} \Spec(B), \Spec(A))\] which is natural in $\Spec(B)$. Therefore, $\Spec(E(A)^W)$ has the universal property of the Weil restriction, and so our claim follows since $\Res^W(\Spec(A))$ is unique up to unique isomorphism.
\end{proof}

Finally, we prove Corollary \ref{intro corollary for res}. In fact, we prove a slightly more general result:
\begin{Corollary}\label{intro corollary for res expanded}
 For $X$ as in Theorem \ref{intro thm} there is an isomorphism \begin{equation}\label{Oc algebra iso}\O(\Res^W_{\circ} X) \cong \O(X)^W\left[\frac{j}{\Delta}\right]_{j \in \O(X)^{\mathrm{sign}}}\end{equation} of $\O(\LC)$-algebras identifying the ring of functions on $\Res^W_{\circ}$ with the smallest subring of the field of fractions of $\O(X)^W$ containing $\O(X)^W$ and any element of the form $\frac{j}{\Delta}$ for $j \in \O(X)^{\mathrm{sign}}$. Moreover, there is an isomorphism \begin{equation}\label{Otstar algebra iso}\O(\Res^W_{\circ} X \times_{\LC} \LTd) \cong \O(X)\left[\frac{j}{\Delta}\right]_{j \in \O(X)^{\mathrm{sign}}} \end{equation} of $\O(\LTd)$-algebras. 
 \end{Corollary}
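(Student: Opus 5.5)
By Proposition \ref{ResFreeClosureW is Spec of W Invariants of Demazure Operator}, we have $\O(\Res^W_{\circ} X) \cong E(A)^W$ and, via Lemma \ref{Closed Under Demazure Implies Recoverable from GIT Quotient}, $\O(\Res^W_{\circ} X \times_{\LC} \LTd) \cong \Symt\otimes_{\Symt^W}E(A)^W \cong E(A)$, where $A=\O(X)$. So the plan is to identify $E(A)$ with $A' := A[\frac{j}{\Delta}]_{j\in A^{\mathrm{sign}}}$ and $E(A)^W$ with $A'' := A^W[\frac{j}{\Delta}]_{j\in A^{\mathrm{sign}}}$, all inside the fraction field of $A$. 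Note that $j/\Delta$ is $W$-invariant for $j$ sign-isotypic, because $\Delta$ is itself sign-isotypic. Hence $A''\subseteq (A')^W$.

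\emph{Step 1: $E(A)=A'$.} By Proposition \ref{env prop}(1), $E(A)$ is generated by $A$ together with the elements $D_{w_0}(a)$, $a\in A$. The formula \labelcref{Explicit Formula for Dw0} gives $D_{w_0}(a)=\frac{1}{\Delta}\sum_w(-1)^{\ell(w)}w(a)$, and the numerator lies in $A^{\mathrm{sign}}$, so $E(A)\subseteq A'$. Conversely, for $j\in A^{\mathrm{sign}}$ we have $\sum_w (-1)^{\ell(w)}w(j)=|W|j$, so $j/\Delta=\frac{1}{|W|}D_{w_0}(j)\in E(A)$. Together these give \labelcref{Otstar algebra iso}.

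\emph{Step 2: $E(A)^W=A''$.} The inclusion $A''\subseteq E(A)^W$ is immediate from Step 1 and the $W$-invariance of $j/\Delta$. For the reverse inclusion, take $e\in E(A)^W$ and write it as a polynomial in elements of $A$ and elements $j_i/\Delta$. Since the $j_i/\Delta$ are $W$-invariant, we may write $e=\sum_m a_m\, m$, where $a_m\in A$ and the $m$ are monomials in the $j_i/\Delta$. Applying the Reynolds operator $\frac{1}{|W|}\sum_w w$ fixes $e$ and the monomials $m$, and replaces each $a_m$ by its average, which lies in $A^W$. This yields $e\in A''$ and proves \labelcref{Oc algebra iso}.

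The only delicate points are bookkeeping: one must check that the identifications of Proposition \ref{ResFreeClosureW is Spec of W Invariants of Demazure Operator} and Lemma \ref{Closed Under Demazure Implies Recoverable from GIT Quotient} are $\O(\LC)$- (respectively $\O(\LTd)$-) algebra maps compatible with the embeddings into $\Frac(A)$, and that the fraction field of $A^W$ is $\Frac(A)^W$. The latter is standard for a finite group acting on a domain, and it justifies describing $A''$ as a subring of $\Frac(A^W)$. I expect no substantive obstacle beyond Step 2's averaging argument, which works because $W$ acts by algebra automorphisms and $|W|$ is invertible in $k$.
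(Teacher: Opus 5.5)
Your proposal is correct and follows essentially the same route as the paper: identify $\O(\Res^W_{\circ}X)$ with $E(A)^W$, show $E(A)=A[\frac{j}{\Delta}]_{j\in A^{\mathrm{sign}}}$ from the explicit formula for $D_{w_0}$, and pass to invariants by projecting coefficients onto the trivial isotypic component. Your Reynolds averaging is exactly the paper's isotypic-decomposition argument. The only small difference is that you obtain the second isomorphism directly from $\Symt\otimes_{\Symt^W}E(A)^W\cong E(A)$ together with Step 1, whereas the paper applies Lemma \ref{Closed Under Demazure Implies Recoverable from GIT Quotient} to both sides and compares their $W$-invariants; these amount to the same thing.
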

 
\begin{proof}
By Proposition \ref{env prop}(1), we have that \begin{equation}\label{EA is Demazure}E(A) = A\left[\frac{D_{w_0}(a)}{\Delta}\right]_{a \in A}\end{equation} where we set $A := \O(X)$. We obtain that \begin{equation}\label{Demazure is Sign}A\left[\frac{D_{w_0}(a)}{\Delta}\right]_{a \in A} = A\left[\frac{a}{\Delta}\right]_{a \in A^{\mathrm{sign}}}\end{equation} by examining the formula \labelcref{Explicit Formula for Dw0}. 

Next, we claim that the natural inclusion \begin{equation}\label{W Invariants of Fixed Points}A^W\left[\frac{a}{\Delta}\right]_{a \in A^{\mathrm{sign}}} \subseteq (A\left[\frac{a}{\Delta}\right]_{a \in A^{\mathrm{sign}}})^W\end{equation} is an equality. To see this, observe that we may write any element in the right hand side of \labelcref{W Invariants of Fixed Points} as $\sum_{i, j}b_{i, j}(\frac{a_i}{\Delta})^j$ for some $b_{i, j} \in A$ and $a_i \in A^{\mathrm{sign}}$. For each fixed $i, j$ we may write $b_{i, j} = b_{i, j}^{\mathrm{triv}} + b_{i, j}^{\mathrm{'}}$ where $b_{i, j}^{\mathrm{triv}} \in A^W$ and $b_{i, j}^{\mathrm{'}}$ is a sum of elements in the \textit{nontrivial} isotypic components of $W$. Therefore, $b_{i, j}^{\mathrm{'}}(\frac{a_i}{\Delta})^j$ is a sum of elements in the \textit{nontrivial} isotypic components of $W$, and so since $\sum_{i, j}b_{i, j}(\frac{a_i}{\Delta})^j$ is fixed by the $W$-action we deduce that $\sum_{i, j}b_{i, j}'(\frac{a_i}{\Delta})^j = 0$. Therefore, \[\sum_{i, j}b_{i, j}(\frac{a_i}{\Delta})^j = \sum_{i, j}b^{\mathrm{triv}}_{i, j}(\frac{a_i}{\Delta})^j \in A^W\left[\frac{a}{\Delta}\right]_{a \in A^{\mathrm{sign}}}\] as desired. 

We now obtain \[E(A)^W = A^W\left[\frac{a}{\Delta}\right]_{a \in A^{\mathrm{sign}}}\] from taking the $W$-invariants \labelcref{EA is Demazure} and \labelcref{Demazure is Sign} and using the fact that \labelcref{W Invariants of Fixed Points} is an equality. Since Proposition \ref{ResFreeClosureW is Spec of W Invariants of Demazure Operator} gives that the ring of functions on $\Res^W_{\circ}(X)$ is isomorphic to $E(A)^W$, we deduce \labelcref{Oc algebra iso}. 

We now prove the equality \labelcref{Otstar algebra iso}. Observe that both sides of the equality \labelcref{Otstar algebra iso} are closed under the action of the Demazure operators: indeed, \[\O(\LTd \times_{\LC} \Res_{\circ}^W(X)) \cong \Symt \otimes_{\Symt^W} \O(\Res^W_{\circ}(X))\] is closed under the action of the Demazure operators since $\Symt$ is, and $\O(X)\left[\frac{j}{\Delta}\right]_{j \in \O(X)^{\mathrm{sign}}}$ is closed under the action of Demazure operators by \labelcref{env prop}(i). Therefore, by Lemma \ref{Closed Under Demazure Implies Recoverable from GIT Quotient}, the equality \labelcref{Otstar algebra iso} holds if it holds on $W$-invariants. However, since \labelcref{W Invariants of Fixed Points} is an equality, we see that the $W$-invariants of the terms in \labelcref{Otstar algebra iso} are the terms of the equality \labelcref{Oc algebra iso}. Therefore, the equality \labelcref{Otstar algebra iso} holds since \labelcref{Oc algebra iso} holds. 
\end{proof}

\begin{Remark}
Let $G = \SL_2$ and identify $\LTd \cong \Spec(k[z])$ with the nontrivial element of $W \cong \Z/2\Z$ acting via $z \mapsto -z$. Following \cite[Example 2.14]{BielawskiFoscoloHypertoricVarietiesWHilbertSchemesandCoulombBranches}, if we take $X := \Spec(k[x,y,z]/(x^2 + y^2 + z^2))$ with the nontrivial element of $W$ acting by the formula $(x, y, z) \mapsto (-x, -y, -z)$, then one can explicitly compute \[\Res^W(X) \cong \Spec(k[x_1, y_1, z^2]/(z^2(x_1^2 + y_1^2 + 1)).\] In particular, $\Res^W(X)$ need not be equal to $\Res^W_{\circ}(X)$. Using Theorem \ref{Comparison of ResW and Demazure Envelope}, one could also prove this by arguing that the $k[z]$-algebra map \[k[x,y,z]/(x^2 + y^2 + z^2) \to k\text{, }(x, y, z) \mapsto (0, 0, 0)\] does not induce a map from $\Spec(E(k[x,y,z]/(x^2 + y^2 + z^2)))$. We omit the details of this proof here.
\end{Remark}

\section{Weil Restriction and $\overline{T^*(G/U)}$}\label{Weil Restriction and Cotangent Bundle of G mod U Section}
Let $\flatGSpacesOverLGd$ denote the category of flat affine schemes $S := \Spec(A)$ equipped with a flat map $S \to \LGd$ and a $G$-action making this map $G$-equivariant. In what follows, for any scheme $Y \to \LGd$ over $\LGd$, we will use the notation $X_{\mathrm{reg}} := X \times_{\LGd} \LGd_{\mathrm{reg}}$ to denote the preimage of the open subset $\LGd_{\mathrm{reg}}$ of regular elements in $\LGd$. We will also use the notation $\tgd := G \times^{B}(\LG/\mathfrak{u})^*$ for the (dual) Grothendieck-Springer resolution and $\tgdreg$ for its restriction to the regular locus.

Let $\jmath: T^*(G/U)_{\mathrm{reg}} \xhookrightarrow{} \affineClosureOfCotangentBundleofBasicAffineSpace$ denote the affinizaiton map, which is an open embedding. We recall the birational map \begin{equation*}\varpi: \Tpsirt \to \affineClosureOfCotangentBundleofBasicAffineSpace\end{equation*} of \cite{GinzburgKazhdanDifferentialOperatorsOnBasicAffineSpaceandtheGelfandGraevAction}, discussed above in Section \ref{Whittaker Cotangent Bundle as Weil Restriction Subsection}. 

We now prove Theorem \ref{Representability for Flat G Spaces Over LGd}. In fact, we will prove the following slightly stronger result, which, reading the top row of this diagram, obviously implies Theorem \ref{Representability for Flat G Spaces Over LGd}: 

\begin{Theorem}\label{Souped up Representability for Flat G Spaces Over LGd}\newcommand{\Tpsiliteral}{T^{\psi}}
If the derived subgroup of $G$ is simply connected then each of the maps in the commutative diagram 

\begin{equation*}\xymatrix@R+2em@C+2em{\Maps_{\LGd}^G(S, \Tpsiliteral) \ar[r]^{\LTd \times_{\LTd\sslash W} -} \ar[d]^{\LGd_{\mathrm{reg}} \times_{\LGd} -} & \Hom^{G \times W}_{\LGd \times_{\mathfrak{c}} \LTd}(S_{\LTd}, \Tpsiliteral_{\LTd}) \ar[d]^{\LGd_{\mathrm{reg}} \times_{\LGd} -} \ar[r]^{\jmath\varpi \circ -} &  \Maps_{\LGd \times_{\mathfrak{c}} \LTd}^{G \times W}(S_{\LTd}, \affineClosureOfCotangentBundleofBasicAffineSpace)\ar[d]^{\LGd_{\mathrm{reg}} \times_{\LGd} -}  \\
\Hom^{G}_{\LGd_{\mathrm{reg}}}(S^{\mathrm{reg}}, \Tpsiliteral)  \ar[r]^{\LTd \times_{\LTd\sslash W} -} & \Hom^{G \times W}_{\LGd_{\mathrm{reg}} \times_{\mathfrak{c}} \LTd}(S^{\mathrm{reg}}_{\LTd}, \Tpsiliteral_{\LTd})  \ar[r]^{\varpi \circ -}& \Maps_{\LGd_{\mathrm{reg}} \times_{\mathfrak{c}} \LTd}^{G \times W}(S_{\LTd}^{\mathrm{reg}}, T^*(G/U)_{\mathrm{reg}})  
 }\end{equation*} is an isomorphism for any $S \in \flatGSpacesOverLGd$, where we use the shorthand $\Tpsiliteral := \Tpsir$.
\end{Theorem}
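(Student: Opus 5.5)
Since the diagram commutes, it suffices to show that the three vertical maps, the bottom-left map and the bottom-right map are bijections; the top horizontal maps are then bijections as well. The vertical maps restrict to the regular locus. For all three we use the same input: $\LGd\setminus\LGdreg$ has codimension $\geq 3$ in $\LGd$ (in fact codimension three), and $S\to\LGd$ is flat. Hence $S^{\mathrm{reg}}\subseteq S$ is an open subset whose complement has codimension $\geq 2$ fiberwise over $\LGd$, and $\O(S)\to\O(S^{\mathrm{reg}})$ is an isomorphism. Flatness lets us pull back depth: $\O_{\LGd}$ has depth $\geq 2$ along the non-regular locus, so $\O_S$ does too. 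The same holds for $S_{\LTd}=S\times_{\LC}\LTd$, because $\LTd\to\LC$ is flat.

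The targets are affine, namely $\Tpsir\cong G\times\LC$, its base change, and the affinization $\affineClosureOfCotangentBundleofBasicAffineSpace$. A map from $S^{\mathrm{reg}}$ into an affine scheme is therefore the same as a ring map out of $\O(S^{\mathrm{reg}})=\O(S)$, which gives bijectivity of the vertical maps. Compatibility with the structure maps and with the $G$- and $W$-actions can be checked on the dense open $S^{\mathrm{reg}}$ (respectively $S^{\mathrm{reg}}_{\LTd}$). One subtlety concerns the right column: the bottom-right target is $T^*(G/U)_{\mathrm{reg}}$ rather than $\overline{T^*(G/U)}$. We need that any map $S^{\mathrm{reg}}_{\LTd}\to\affineClosureOfCotangentBundleofBasicAffineSpace$ over $\LGdreg\times_{\LC}\LTd$ lands in $\jmath(T^*(G/U)_{\mathrm{reg}})$. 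This holds because the preimage of $\LGdreg$ in $\affineClosureOfCotangentBundleofBasicAffineSpace$ equals $T^*(G/U)_{\mathrm{reg}}$, a standard fact from \cite{GinzburgKazhdanDifferentialOperatorsOnBasicAffineSpaceandtheGelfandGraevAction}.

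The bottom-left map is bijective by Proposition \ref{Fully faithfulness of tensoring up even for algebra objects}, applied relative to $\LGdreg$ as a base change of $\LTd\to\LC$. $W$-equivariant maps of $\LTd$-base changes descend uniquely.

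The main step is the bottom-right map. Over the regular locus, the map $\varpi$ should be an isomorphism
\[\Tpsir_{\LTd}\times_{\LGd}\LGdreg\cong\tgdreg\times_{\mathfrak{t}^*}\ldots\cong T^*(G/U)_{\mathrm{reg}}\]
of $G\times W$-schemes over $\LGdreg\times_{\LC}\LTd$. To see this, write $\LGdreg\times_{\LC}\LTd\cong\tgdreg$ and note that both sides are $T$-torsors over $\tgdreg$, trivialized compatibly. Note that $\Tpsir\times_{\LGd}\LGdreg$ is $G\times\LC$ restricted to the regular locus via the moment map $(g,x)\mapsto\mathrm{Ad}_g(\kappa(x))$. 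This identification is where I expect the main obstacle. One must check that $\varpi$ restricted to the regular part is an isomorphism compatible with the Gelfand–Graev $W$-action, and this is where simple connectivity of $[G,G]$ enters: the $W$-action on $T^*(G/U)_{\mathrm{reg}}$ exists and matches the one on $\Tpsir_{\LTd}$. As a fallback, I would instead show directly that $T^*(G/U)_{\mathrm{reg}}\to\tgdreg$ is a $J$-torsor twisted so that $W$-invariant sections correspond to sections of $\Tpsir$. Given this isomorphism, post-composition with it is bijective, and the diagram commutes by naturality of base change and composition.
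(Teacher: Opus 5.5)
\paragraph{The gap: the regular-locus isomorphism is false.} Your reduction to the vertical arrows, the bottom-left arrow and the bottom-right arrow is sound. Your treatment of the vertical arrows and the bottom-left arrow is fine. Your flat-pullback-of-depth argument giving $\O(S)=\O(S^{\mathrm{reg}})$ is actually more robust than the paper's Hartogs-type lemma, since it needs no normality or integrality of $S$.

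The gap is the main step. The claimed isomorphism $\varpi\colon \Tpsirt\iso T^*(G/U)_{\mathrm{reg}}$ over $\tgdreg\cong\LGdreg\times_{\LC}\LTd$ is false.
\begin{itemize}
\item Since $\Tpsir\cong G\times\LC$ already lies over $\LGdreg$, consider the map $\Tpsirt\to\tgdreg$, $(g,\lambda)\mapsto(\mathrm{Ad}_g\kappa(\bar\lambda),\lambda)$. Its fiber over $(x,\lambda)$ is a torsor for the centralizer $Z_G(x)$. So it is a torsor for the pullback of $J$, not for the constant torus $T$.
\item The natural map $Z_G(x)\to T$ (through the Borel determined by the point of $\tgdreg$) is an isomorphism only when $x$ is regular semisimple. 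When $x$ is not semisimple, the unipotent part of $Z_G(x)$ maps trivially.
\item Concretely, take $G=\SL_2$ and $x$ regular nilpotent. The fiber of $\Tpsirt$ over $(x,0)$ is $\{\pm1\}\times\G_a$, while the fiber of $T^*(G/U)_{\mathrm{reg}}$ is $\G_m$.
\end{itemize}
So $\varpi$ is only birational; it is an isomorphism only over the preimage of $\LT^*_{\mathrm{reg}}$. Bijectivity of the bottom-right arrow is therefore not ``post-composition with an isomorphism''. It is exactly the nontrivial statement that $W$-equivariance forces maps into $T^*(G/U)_{\mathrm{reg}}$ to factor uniquely through $\varpi$.

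\paragraph{What is needed instead.} You need the isomorphism $\Tpsir\iso\Res^W_q(T^*(G/U)_{\mathrm{reg}})$ induced by $\varpi$, where $q\colon\tgdreg\to\LGdreg$. Given it, the composite of the two bottom arrows is the universal-property bijection of $\Res^W_q$. Since the bottom-left arrow is bijective, so is the bottom-right one. The paper proves this isomorphism in three steps.
\begin{enumerate}
\item[(a)] It invokes Knop's isomorphism $J\cong\Res^W_{\LTd\to\LC}(T^*T)$. This is where the hypothesis on $[G,G]$ enters, since it excludes $\SO_{2n+1}$ factors. It is not about the existence of the Gelfand--Graev action, as you suggest.
\item[(b)] It trivializes the $T$-torsor $T^*(G/U)_{\mathrm{reg}}\to\tgdreg$ over a cover by $W$-stable opens $q^{-1}(\mathbf{U}_\alpha)$. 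These come from intersecting the $W$-translates of the big-cell locus and using irreducibility of the Kostant fibers. Then, by compatibility of $\Res^W_q$ with fiber products and base change, $\Res^W_q(T^*(G/U)_{\mathrm{reg}})\to\LGdreg$ is a Zariski $J$-torsor.
\item[(c)] The map $\Tpsir\to\Res^W_q(T^*(G/U)_{\mathrm{reg}})$ given by the universal property is generically $J$-equivariant, hence $J$-equivariant everywhere by separatedness. It is therefore a map of $J$-torsors, hence an isomorphism.
\end{enumerate}

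Your fallback points in this direction but misidentifies the structure group. $T^*(G/U)_{\mathrm{reg}}\to\tgdreg$ is a $T$-torsor, and a $J$-torsor appears only after taking the $W$-fixed Weil restriction. Getting there requires exactly the inputs (a) and (b), which are missing from your proposal.
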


\subsection{Inverse of Restricting to Regular Locus}\label{Affinization is isomorphism for Flat LGd Schemes} We now prove:

\begin{Lemma}\label{Extendability of Flat Affine Schemes}
Assume $S_1, S_2$ are normal affine integral schemes each equipped with a flat map to an irreducible finite dimensional $k$-scheme $B$. For any open subset $B_0 \subseteq B$ whose complement has codimension at least two, the restriction map \[\Maps_{B}(S_1, S_2) \xrightarrow{} \Maps_{B_0}(S_1 \times_B B_0, S_2 \times_B B_0)\] is an isomorphism.
\end{Lemma}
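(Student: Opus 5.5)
Since $S_1$ and $S_2$ are affine, a $B$-morphism $S_1\to S_2$ is the same as a $k$-algebra map $\O(S_2)\to\O(S_1)$ compatible with the structure maps from $B$. So the plan is to show that restriction of functions $\O(S_i)\to\O(S_i\times_B B_0)$ is an isomorphism for $i=1,2$, and then transport morphisms along these isomorphisms.

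Write $U_i:=S_i\times_B B_0$, an open subscheme of $S_i$.

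\textbf{Step 1: codimension of the complement.} I will show that $S_i\smallsetminus U_i$ has codimension at least two in $S_i$. Let $Z:=B\smallsetminus B_0$ and let $\eta$ be a generic point of an irreducible component of $\pi_i^{-1}(Z)$, where $\pi_i\colon S_i\to B$ is the structure map. Since $\pi_i$ is flat, going down holds. Hence the local ring $\O_{S_i,\eta}$ is flat over $\O_{B,\pi_i(\eta)}$, and $\dim \O_{S_i,\eta}\ge \dim \O_{B,\pi_i(\eta)}$; this uses that generizations of $\pi_i(\eta)$ lift to generizations of $\eta$. The point $\pi_i(\eta)$ lies in $Z$, which has codimension at least two in $B$, so $\dim\O_{B,\pi_i(\eta)}\ge 2$. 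Therefore $\operatorname{codim}_{S_i}\overline{\{\eta\}}\ge 2$.

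If $\pi_i^{-1}(Z)$ is empty there is nothing to prove. Finite dimensionality of $B$ and irreducibility of $B$ are used only to make the codimension statement meaningful via local dimensions.

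\textbf{Step 2: algebraic Hartogs.} Each $S_i$ is a normal noetherian integral affine scheme, so $\O(S_i)=\bigcap_{\operatorname{ht}\mathfrak p=1}\O(S_i)_{\mathfrak p}$. By Step 1, every height one prime of $\O(S_i)$ corresponds to a point of $U_i$. Consequently $\O(U_i)\subseteq\bigcap_{\operatorname{ht}\mathfrak p=1}\O(S_i)_{\mathfrak p}=\O(S_i)$ inside the function field, and the restriction map $\O(S_i)\to\O(U_i)$ is an isomorphism. Noetherianity holds if $S_i$ is of finite type, which I will assume, as the schemes in the intended application are.

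\textbf{Step 3: conclusion.} Injectivity of the restriction map: $U_1$ is dense in $S_1$ and $S_2$ is affine, hence separated. So two morphisms $S_1\to S_2$ that agree on $U_1$ agree. Equivalently, the induced algebra maps agree after composing with the injection $\O(S_1)\hookrightarrow\O(U_1)$.

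Surjectivity: a $B_0$-morphism $g\colon U_1\to U_2$ gives an algebra map $g^*\colon\O(U_2)\to\O(U_1)$. Composing with the isomorphisms of Step 2 yields $\O(S_2)\to\O(S_1)$, that is, a morphism $\tilde g\colon S_1\to S_2$. It remains to check two things:
\begin{itemize}
\item $\tilde g$ is over $B$: the two composites $S_1\to B$ agree on the dense open $U_1$, and in the affine case one checks agreement on $\O(B)$, which again injects into $\O(U_1)$. If $B$ is not affine, one argues locally on an affine cover of $B$.
\item $\tilde g$ restricts to $g$: both are determined by the same algebra map on global functions, since $U_2$ is quasi-affine, being open in the affine $S_2$.
\end{itemize}

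\textbf{Main obstacle.} I expect Step 1 to be the main obstacle: it is the only place flatness enters, and it has to be phrased carefully via the dimension inequality for flat local homomorphisms. Step 2 is the standard characterization of normal domains.
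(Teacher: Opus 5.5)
Your proposal is correct and follows the paper's proof. Injectivity holds because $S_1\times_B B_0$ is a nonempty, hence dense, open in the integral $S_1$ and the target is affine. Surjectivity holds because flatness forces $S_i\smallsetminus(S_i\times_B B_0)$ to have codimension $\ge 2$, so $\O(S_i)=\O(S_i\times_B B_0)$ by algebraic Hartogs for normal noetherian domains. Your going-down dimension argument and explicit noetherian hypothesis only supply details the paper leaves implicit.

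One correction: checking that $\tilde g$ lies over $B$ ``locally on an affine cover'' really needs $B$ separated. For the plane with doubled origin, with $S_1,S_2$ its two affine charts, the statement itself fails. This does not affect the paper, since the lemma is only used with $B=\mathfrak{g}^*$ affine.
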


\begin{proof} 
For injectivity, we observe that if $f, g: S_1 \to S_2$ have the property that the restrictions over $B_0$ agree, then the locus where $f = g$ contains $S_1 \times_{B} B_0$. If $S_1$ is empty, Lemma \ref{Extendability of Flat Affine Schemes} is vacuous; we hereafter assume $S_1$ is nonempty. Since flat maps are open, $S_1 \times_{B} B_0$ is a nonempty open subset of $S_1$. Since $S_1$ is affine, the locus where $f = g$ is also closed. Therefore since $S_1$ is integral (and therefore connected) we deduce that $f = g$ everywhere. 

We now verify surjectivity. Assume we are given a map of schemes $h: S_1 \times_B B_0 \to S_2 \times_B B_0$. This induces a map on global functions \[\O(S_2 \times_B B_0) \to \O(S_1 \times_B B_0)\] by pullback. By flatness of the structure maps, the complement of $S_i \times_B B_0$ has codimension at least 2 in $S_i$ for $i \in \{1, 2\}$. Therefore, $\O(S_i) = \O(S_i \times_B B_0)$ for $i \in \{1, 2\}$. Using this, we immediately obtain our desired extension.
\end{proof}
Observe that, given a map of separated schemes equipped with the action of some group, we may check equivariance by checking equivariance on some dense open subset of the base. This, along with Lemma \ref{Extendability of Flat Affine Schemes}, proves that all of the vertical arrows in Theorem \ref{Souped up Representability for Flat G Spaces Over LGd} are isomorphisms. 

\subsection{Inverse of Base Change}\label{Inverse of Base Change Subsubsection} The fact that the top left rightward pointing arrow is an isomorphism follows immediately from Theorem \ref{Souped up Representability for Flat G Spaces Over LGd}. One can similarly construct the inverse to the bottom left rightward pointing arrow in Theorem \ref{Souped up Representability for Flat G Spaces Over LGd} by taking the categorcial quotient $- \sslash W$. Alternatively, since the leftmost square in the diagram of Theorem \ref{Souped up Representability for Flat G Spaces Over LGd} to immediately deduce that the bottom left horizontal arrow is an isomorphism.

\subsection{Trivialization by $W$-stable subsets} To prove Theorem \ref{Souped up Representability for Flat G Spaces Over LGd}, we will use the following result:

\begin{Proposition}\label{Cotangent Bundle of Base Affine Space is Torsor Over Subsets of LGdreg}
Assume that $\mathbf{U}$ is an open subset of $\tgdreg$.  \begin{enumerate}
    \item If the restriction of $\mathbf{U} \to \LTd$ is surjective on $k$-points, then $\mathbf{U}$ and its $G(k)$-translates cover $\tgdreg$.
    \item If $\mathbf{U}$ is $W$-invariant and the restriction of $\mathbf{U} \to \mathfrak{c}$ is surjective on $k$-points, then $\mathbf{U}$ and its $G(k)$-translates cover $\tgdreg$.
    \item There exists an open cover $\mathbf{U}_{\alpha}$ of $\LGdreg$ such that the preimage of the open subset $\tgdreg \times_{\LGdreg} \mathbf{U}_{\alpha}$ under the map $T^*(G/U) \to \tgdreg$ is isomorphic to $T \times \tgdreg \times_{\LGdreg} \mathbf{U}_{\alpha}$.
\end{enumerate}
\end{Proposition}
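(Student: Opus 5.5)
For (1) and (2) I would use the identification $\tgdreg \cong \LGdreg\times_{\LC}\LTd$, which is the regular part of the Grothendieck--Springer map. Under it, the $G$-action is through the first factor and the map to $\LTd$ is the second projection. Kostant's theorem says each fiber of $\LGdreg\to\LC$ is a single $G$-orbit. So for every $\lambda\in\LTd(k)$ the fiber of $\tgdreg\to\LTd$ over $\lambda$ is identified with the fiber of $\LGdreg$ over $q(\lambda)$, hence is a single $G(k)$-orbit.

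For (1): if $\mathbf U$ surjects onto $\LTd(k)$, then $\mathbf U$ meets every fiber. Every $k$-point of $\tgdreg$ lies in the same fiber as, and hence in the $G(k)$-orbit of, a point of $\mathbf U$. Since we are over an algebraically closed field, covering on $k$-points suffices, because the union of the translates is open.

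For (2): $W$ acts on $\tgdreg\cong\LGdreg\times_{\LC}\LTd$ through the second factor, commuting with $G$. If $\mathbf U$ is $W$-stable, its image in $\LTd$ is $W$-stable. If $\mathbf U\to\LC$ is surjective on $k$-points, this image meets every $W$-orbit in $\LTd(k)$, because the fibers of $q$ are exactly the $W$-orbits. So the image is all of $\LTd(k)$, and (1) applies.

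For (3), I would use that $T^*(G/U)=G\times^U(\LG/\mathfrak u)^*$ maps to $\tgd=G\times^B(\LG/\mathfrak u)^*$ as the pullback of the $T$-torsor $G/U\to G/B$ along the projection $\tgd\to G/B$. This torsor is trivial over the open cell $C:=\overline U B/B$, since $\overline U\times B\iso \overline UB$ gives a section $\overline U\to G/U$; the same holds over every translate $gC$. So it suffices to cover $\LGdreg$ by opens $\mathbf U_\alpha$ whose preimage in $\tgdreg$ maps into a single translate $g_\alpha C\subseteq G/B$. For $g\in G(k)$, set
\[
\mathbf U_g:=\LGdreg\setminus \pi\big(\tgdreg\times_{G/B}(G/B\setminus gC)\big),
\]
where $\pi:\tgdreg\to\LGdreg$ is the Grothendieck--Springer map. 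The set $\tgdreg\times_{G/B}(G/B\setminus gC)$ is closed in $\tgdreg$, and $\pi$ is finite (base change of $q$), hence proper. So the image is closed and $\mathbf U_g$ is open. By construction $\pi^{-1}(\mathbf U_g)$ maps into $gC$.

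To see that the $\mathbf U_g$ cover, fix $x\in\LGdreg(k)$. The fiber $\pi^{-1}(x)$ is finite, consisting of at most $|W|$ Borel subalgebras. For each of these points $b_i\in G/B$, the set of $g$ with $b_i\in gC$ is a nonempty open subset of the irreducible variety $G$. A finite intersection of such sets is nonempty, so some $g$ has $x\in\mathbf U_g$. Restricting the trivialization of $G/U\to G/B$ over $gC$ then gives $\pi^{-1}(\mathbf U_g)\times_{\tgdreg}T^*(G/U)\cong T\times(\tgdreg\times_{\LGdreg}\mathbf U_g)$, compatibly with the maps to $\tgdreg$.

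The step I expect to need the most care is the identification $T^*(G/U)\cong\tgd\times_{G/B}G/U$ as $T$-torsors. It must be compatible with the moment-map structure, so that the trivialization over $gC$ is an isomorphism of schemes over $\tgdreg$. I would check it by writing both sides as $G\times^U(\LG/\mathfrak u)^*$ and $G\times^B\big((\LG/\mathfrak u)^*\times B/U\big)$ and comparing the two descriptions directly.
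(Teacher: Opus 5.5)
Your proposal is correct. Parts (1) and (2) are essentially the paper's argument; for (3) you prove the covering property by a different route.

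The paper's route for (3) runs as follows:
\begin{enumerate}
\item It takes $V$ to be the preimage of the big cell and passes to the $W$-stable open $\bigcap_w w(V)$.
\item It descends this to an open $\mathbf{U}_0=\pi(\bigcap_w w(V))\subseteq\LGdreg$, using the lemma matching $W$-stable opens of $\tgdreg$ with opens of $\LGdreg$.
\item It shows $\bigcap_w w(V)\to\LTd$ is surjective on $k$-points, because each fiber of $\tgdreg\to\LTd$ is a single regular orbit and hence irreducible.
\item It applies part (1) to conclude that the $G(k)$-translates of $\mathbf{U}_0$ cover.
\end{enumerate}

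You instead define $\mathbf{U}_g$ as the complement of the $\pi$-image of the closed bad locus, which needs only that $\pi$ is closed. You then check the covering pointwise, moving the finitely many Borels in $\pi^{-1}(x)$ simultaneously into $gC$ via irreducibility of $G$.

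The two covers are in fact identical. By $G$-equivariance, $\mathbf{U}_g=g\,\mathbf{U}_1$. Since $\pi^{-1}(x)$ is a $W$-orbit, $\pi^{-1}(x)\subseteq V$ if and only if $\pi^{-1}(x)\subseteq\bigcap_w w(V)$, so $\mathbf{U}_1=\mathbf{U}_0$. Only the proof that the cover covers differs.

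Your version is more self-contained. It uses neither the $W$-action, nor the lemma on $W$-stable opens, nor part (1). It trades irreducibility of the fibers of $\tgdreg\to\LTd$ for finiteness of the fibers of $\pi$. The paper's version reuses part (1).

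One small addition: as you did in (1), note that covering the closed points of $\LGdreg$ suffices. This holds because the union of the $\mathbf{U}_g$ is open and $\LGdreg$ is of finite type over $k$.
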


We prove Proposition \ref{Cotangent Bundle of Base Affine Space is Torsor Over Subsets of LGdreg} after first showing the following (likely well known) Lemma. In it, we lightly abuse notation and denote by $q: \tgdreg \to \LGdreg$ the quotient map.

\begin{Lemma}\label{Open and Closed Subsets of W Quotient on Regular Locus}
    The map $q: \tgdreg \to \LGd_{\mathrm{reg}}$ induces a bijective correspondence between the $W$-stable open, resp. closed, subsets of $\tgdreg$ and the open, resp. closed, subsets of $\LGdreg$.
\end{Lemma}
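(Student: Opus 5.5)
The plan is to show that $q:\tgdreg \to \LGdreg$ is a finite, surjective, $W$-invariant morphism that identifies $\LGdreg$ with the categorical (in fact geometric) quotient $\tgdreg/W$. The statement then reduces to the standard fact that the quotient map by a finite group is both open and closed and separates $W$-orbits.

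\textbf{Step 1: identify $q$ with a base change of $\LTd \to \LC$.} Over the regular locus we use the Cartesian square identifying $\tgdreg \cong \LGdreg \times_{\LC} \LTd$. This holds because the Grothendieck--Springer map restricted to regular elements is the base change of $\LTd\to\LC$ along the Chevalley map $\LGdreg \to \LC$; this is standard, via Kostant's theorem that $\LGdreg\to\LC$ is smooth and that $\tgdreg \to \LGdreg\times_{\LC}\LTd$ is a birational finite morphism onto a normal variety. Under this identification, the $W$-action on $\tgdreg$ is the action on the second factor.

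\textbf{Step 2: deduce that $q$ is a good quotient.} Since $\LTd \to \LC$ is finite, flat and surjective, and is a universal geometric quotient by $W$ (taking invariants commutes with flat base change and $\Symt$ is free over $\Symt^W$), the base change $q$ has the same properties. It is finite, hence closed; flat and of finite type, hence open; surjective; and its fibers are exactly $W$-orbits, because the fibers of $\LTd\to\LC$ are $W$-orbits and this persists after base change on $k$-points. Alternatively, without Step 1, one can check directly that the fiber of $q$ over a regular $\xi$ is the finite set of Borels containing $\xi$ in the appropriate sense, on which $W$ acts transitively.

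\textbf{Step 3: the correspondence.} Send a $W$-stable closed $Z\subseteq\tgdreg$ to $q(Z)$, which is closed by finiteness, and send a closed $Y$ to $q^{-1}(Y)$. We have $q(q^{-1}(Y))=Y$ by surjectivity. Also $q^{-1}(q(Z))=Z$, because a point with the same image as a point of $Z$ lies in its $W$-orbit, hence in $Z$. Taking complements handles open sets, since the complement of a $W$-stable set is $W$-stable and $q^{-1}$ commutes with complements. To pass from $k$-points to all scheme points, use that the schemes are of finite type over $k$, so closed subsets are determined by their closed points.

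The main obstacle is Step 1 (or the fiber description in Step 2): establishing that the $W$-action on $\tgdreg$ has $q$-fibers exactly equal to $W$-orbits. I would cite Kostant/Ngô for the Cartesian square over $\LGdreg$ rather than reprove it.
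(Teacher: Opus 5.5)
Your proposal is correct and follows essentially the same route as the paper: $q$ is a finite flat surjection, hence closed and open; the identity $q^{-1}(q(Z))=Z$ for $W$-stable $Z$ is checked on closed points; and $q(q^{-1}(Y))=Y$ follows from surjectivity. The only difference is that you justify explicitly, via $\tgdreg\cong\LGdreg\times_{\LC}\LTd$ (an identification the paper itself uses later, in part (3) of the following proposition), that the fibers of $q$ over closed points are single $W$-orbits, whereas the paper's proof uses this without stating it.
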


\begin{proof}
    The map $q: \tgdreg \to \LGd_{\mathrm{reg}}$ is a finite flat surjection. Since it is finite, it is closed. Since it is a flat map of finite type schemes, it is open. For any open, $W$-invariant subset $\mathbf{U} \subseteq \tgdreg$, we have that $q^{-1}q(\mathbf{U}) = \mathbf{U}$ which, for example, can be checked on the closed points. Similarly, for any open subset $\mathbf{U}_0 \subseteq \LGdreg$, we have that $qq^{-1}(\mathbf{U}_0) = \mathbf{U}_0$ since $q$ is surjective.
\end{proof}

\begin{proof}[Proof of Proposition \ref{Cotangent Bundle of Base Affine Space is Torsor Over Subsets of LGdreg}] 
We first prove (1). Assume $(x, \nu) \in \tgdreg(k)$. Then, by assumption, there exists some $(x', \nu) \in \mathbf{U}(k)$. Since $x$ and $x'$ are regular and they both have the same image in $\mathfrak{c}$ (namely, that of $\overline{\nu} \in \LTd\sslash W$) there exists some $g \in G(k)$ such that $gx' = x$. Therefore, $g(x', \nu) = (x, \nu)$ and so $(x, \nu) \in gU$.

We now prove (2). Assume $(x, \nu) \in \tgdreg(k)$  Then, by assumption, there exists some $(x', \nu') \in \mathbf{U}(k)$. Thus there exists some $w \in W$ such that $w\nu' = \nu$ so $w(x', \nu') = (x', \nu)$. We then proceed as in (1).

We now prove (3). We can identify $T^*(G/U)_{\mathrm{reg}} \to \tgdreg$ with the map \[G \times^U(\LG/\LB)^* \to G \times^B (\LG/\mathfrak{b})^*.\] This torsor is trivial on the open set $V  := \overline{U} B\times^B (\LG/\mathfrak{b})^*$. 

Let $r: \tgdreg \to \LTd$ denote the projection map. Observe that the fibers of $r$ at closed points of $\LTd$ are irreducible varieties; this follows since $\tgdreg \xrightarrow{\sim} \LGdreg \times_{\LC} \LTd$ and so $r^{-1}(\lambda) \cong \LGdreg \times_{\LC} \{\overline{\lambda}\}$, and this variety is irreducible by \cite[Theorem 0.7]{KostantLieGroupRepresentationsonPolynomialRings}. Moreover, it follows by inspection that the map $V \to \LTd$ is surjective on $k$-points. By the $W$-equivariance of $r$, we therefore see that $w(V) \to \LTd$ is also surjective on $k$-points. Thus \[r^{-1}(\lambda) \cap \bigcap_w w(V) =  \bigcap_w (w(V) \cap r^{-1}(\lambda))\] is an intersection of nonempty open subsets inside the irreducible variety $r^{-1}(\lambda)$ and therefore is nonempty. By Lemma \ref{Open and Closed Subsets of W Quotient on Regular Locus}, if we let $\mathbf{U}_0 := q(\bigcap_w w(V))$ then $q^{-1}(\mathbf{U}_0) = \bigcap_w w(V)$ and so the $T$-torsor $T^*(G/U)_{\mathrm{reg}} \to \tgdreg$ can be trivialized on $q^{-1}(\mathbf{U}_0)$. Moreover, since $\cap_w w(V) \to \LTd$ is surjective, by Proposition \ref{Cotangent Bundle of Base Affine Space is Torsor Over Subsets of LGdreg}(1) we see that $\tgdreg \times_{\LGdreg} \mathbf{U}_0 \cong \bigcap_w w(V)$ covers $\tgdreg$.
\end{proof}

\subsection{Pullback Map Is Isomorphism on Regular Locus} We now complete the proof of Theorem \ref{Souped up Representability for Flat G Spaces Over LGd}. In Section \ref{Affinization is isomorphism for Flat LGd Schemes}, we have seen that all vertical maps in the diagram of Theorem \ref{Souped up Representability for Flat G Spaces Over LGd} are isomorphisms. In Section \ref{Inverse of Base Change Subsubsection}, we showed that the two leftmost horizontal arrows are isomorphisms. Therefore Theorem \ref{Souped up Representability for Flat G Spaces Over LGd} follows from the following claim, which says that the lower right horizontal arrow is an isomorphism even if $S$ is not flat over $\LGd$:

\begin{Theorem}
If $G$ does not contain $\SO_{2n} + 1$ as a direct factor, the map $\varpi$ induces an isomorphism \[\Tpsir \xrightarrow{\sim} \Res^W(T^*(G/U)_{\mathrm{reg}}).\]
\end{Theorem}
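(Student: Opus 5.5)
The plan is to reduce the statement to Knop's isomorphism \labelcref{Isomorphism of Ngo} $J \cong \Res^W(T^*T)$, applied locally over $\LGdreg$ and then glued. Both sides are schemes over $\LGdreg$, which contains $\LC$ via a Kostant section. Concretely, $\Tpsir \cong G\times\LC$ restricts over $\LGdreg$ to the relative group $G\times\LGdreg$, modulo the regular centralizer action. By construction of $\varpi$ in \cite{GinzburgKazhdanDifferentialOperatorsOnBasicAffineSpaceandtheGelfandGraevAction}, the map $\Tpsirt \to T^*(G/U)_{\mathrm{reg}}$ is $G$-equivariant and $W$-equivariant. So by the universal property \labelcref{fun intro} of $\Res^W$, the map $\varpi$ induces a $G$-equivariant morphism $\Tpsir \to \Res^W(T^*(G/U)_{\mathrm{reg}})$ over $\LC$. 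It suffices to show that this morphism is an isomorphism Zariski locally on $\LGdreg$. Here I view $T^*(G/U)_{\mathrm{reg}}$ over $\LGdreg\times_\LC\LTd\cong\tgdreg$ and take the Weil restriction along the finite flat map $\tgdreg\to\LGdreg$. This is compatible with base change along opens of $\LGdreg$ by Proposition \ref{Base Change of Weil Restriction}.

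\textbf{Step 1 (local trivialization).} Using Proposition \ref{Cotangent Bundle of Base Affine Space is Torsor Over Subsets of LGdreg}(3), choose an open cover $\mathbf{U}_\alpha$ of $\LGdreg$ over which the $T$-torsor $T^*(G/U)_{\mathrm{reg}}\to\tgdreg$ is trivial. The key point is that the trivialization can be taken $W$-equivariant, in the sense that the Gelfand--Graev $W$-action becomes the twisted $W$-action on $T\times \tgdreg$. I would verify this on the explicit chart $\bigcap_w w(V)$ from the proof of that proposition, using the formula for the Gelfand--Graev action on the open Bruhat cell.

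\textbf{Step 2 (identify the local Weil restriction).} Over $\mathbf{U}_\alpha$, use the $W$-equivariant identification $T\times\tgdreg|_{\mathbf{U}_\alpha} \cong (T\times\LTd)\times_{\LC}\mathbf{U}_\alpha$. Combined with Proposition \ref{Base Change of Weil Restriction}, this gives
\[
\Res^W(T^*(G/U)_{\mathrm{reg}})|_{\mathbf{U}_\alpha}\cong \Res^W(T^*T)\times_\LC \mathbf{U}_\alpha .
\]
By \labelcref{Isomorphism of Ngo}, the right-hand side is $J\times_\LC\mathbf{U}_\alpha$, which is the pullback to $\mathbf{U}_\alpha$ of the regular centralizer group scheme. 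This is where the hypothesis excluding $\SO_{2n+1}$ enters.

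\textbf{Step 3 (compare with $\Tpsir$).} Restricted over $\mathbf{U}_\alpha$, $\Tpsir$ is likewise a torsor under $J$: it is the $G$-torsor $G\times\LGdreg$ modulo $\overline{U}$-twisting, which is a $J$-torsor over $\LGdreg$. One checks that $\varpi$ intertwines the $J$-actions, since the $T$-action on $T^*(G/U)$ corresponds to the $J$-action via the map $J\to T$ given by Knop's isomorphism. A morphism of $J$-torsors is an isomorphism. Hence the map is an isomorphism over each $\mathbf{U}_\alpha$, and therefore globally.

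\textbf{Main obstacle.} I expect the hardest part to be the $W$-equivariance in Step 1, together with matching the $J$-actions in Step 3. The Gelfand--Graev action does not preserve $T$-torsor trivializations in an obvious way: its formula involves the intertwiners, which is precisely where the root $2$-torsion behaviour for $\SO_{2n+1}$ appears. I would first treat rank one ($\SL_2$, a simply connected $[G,G]$) by explicit computation, and then reduce to Levi subgroups of semisimple rank one. For that reduction I would use that both sides are determined in codimension one on $\LC$, by Lemma \ref{Extendability of Flat Affine Schemes} together with smoothness from Theorem \ref{intro thm}(ii).
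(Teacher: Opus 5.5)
Your outline is the paper's outline: trivialize over the cover of Proposition \ref{Cotangent Bundle of Base Affine Space is Torsor Over Subsets of LGdreg}(3), use base change, identify with $\Res^W(T^*T)\cong J$, and finish with an equivariant map of $J$-torsors. The gap is the step you single out as the key point. The trivializations cannot be chosen $W$-equivariantly, and Steps 1--2 already fail for $G=\SL_2$.

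To see this, take $G=\SL_2$. Then $P:=T^*(G/U)_{\mathrm{reg}}=\{(v,\xi):v\neq 0,\ \xi\neq 0\}\subset\mathbb{A}^2\times\mathbb{A}^2$, and $W$ acts by a $G$-equivariant involution $\Phi$ with $\Phi(tp)=t^{-1}\Phi(p)$ (e.g.\ $\Phi(v,\xi)=(J\xi,-Jv)$, with $J$ the standard symplectic matrix). The $W$-fixed locus of $\tgdreg\cong\LGdreg\times_{\LC}\LTd$ is $\{z=0\}\cong\mathcal{N}_{\mathrm{reg}}$, the regular nilpotent orbit. Over such a point, the fiber of $P$ is a $T$-torsor on which $\Phi$ has exactly two fixed points, interchanged by $-1\in T$. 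Since $-1\in Z(G)$ acts on $P$ through $-1\in T$, these fixed points form a single $G$-orbit $\{(v,-Jv)\}\cong G/U$. This orbit maps to $\mathcal{N}_{\mathrm{reg}}\cong G/Z_G(e)$ by the connected \'etale double cover $G/U\to G/(\{\pm1\}\times U)$.

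Now suppose there were a $W$-equivariant trivialization over $q^{-1}(\mathbf{U}_\alpha)$ for some $\mathbf{U}_\alpha$ meeting $\mathcal{N}_{\mathrm{reg}}$. It would give a $W$-equivariant section, namely the image of $\{1\}\times q^{-1}(\mathbf{U}_\alpha)$. Its restriction to the fixed locus would split this double cover over $\mathbf{U}_\alpha\cap\mathcal{N}_{\mathrm{reg}}$, which is impossible because $G/U$ is irreducible. Sections of $\Res^W_q(P)$ over $\mathbf{U}$ are exactly $W$-equivariant sections of $P$ over $q^{-1}(\mathbf{U})$. So the local isomorphism in your Step 2 is false, not merely unproven: its right side has an identity section, its left side has none. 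Hence $\Res^W_q(P)\to\LGdreg$ is not Zariski-locally trivial. This is consistent with the theorem, since $\Tpsir\to\LGdreg$ restricts over $\mathcal{N}_{\mathrm{reg}}$ to the non-split map $G\to G/Z_G(e)$. In particular, no computation on the chart $\bigcap_w w(V)$ can work, because that chart surjects onto $\LTd$ and so meets the fixed locus. Reducing to semisimple rank one cannot help either, since the failure already occurs there. The paper's written argument has the same problem: it applies $\Res^W_q$ to the trivializations of Proposition \ref{Cotangent Bundle of Base Affine Space is Torsor Over Subsets of LGdreg}(3), which are not $W$-equivariant, and asserts a Zariski torsor.

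What you actually need requires no trivialization. The torsor condition for $P\to\tgdreg$ is the $W$-equivariant isomorphism $(T^*T\times_{\LTd}\tgdreg)\times_{\tgdreg}P\xrightarrow{\sim}P\times_{\tgdreg}P$, $(t,p)\mapsto(tp,p)$. Apply $\Res^W_q$ to it, using three facts: $\Res^W_q$ preserves fiber products, $\Res^W_q(\tgdreg)\cong\LGdreg$, and $\Res^W_q(T^*T\times_{\LTd}\tgdreg)\cong\Res^W(T^*T)\times_{\LC}\LGdreg$ by Proposition \ref{Base Change of Weil Restriction}. The result is that $\Res^W_q(P)$ is a pseudo-torsor under $\Res^W(T^*T)\times_{\LC}\LGdreg\cong J\times_{\LC}\LGdreg$.

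On the other side, $\Tpsir\cong G\times\LC\to\LGdreg$ (via a Kostant section) is smooth and surjective, with $J$ acting simply transitively on fibers. So it is a genuine torsor, trivial \'etale-locally. An equivariant map $\epsilon$ from a torsor to a pseudo-torsor is automatically an isomorphism. Indeed, a local section $s$ of $\Tpsir$ gives a section $\epsilon(s)$ of $\Res^W_q(P)$, which trivializes it compatibly with $s$, and descent finishes the argument. Your Step 3 then goes through once Steps 1--2 are replaced by this pseudo-torsor statement. The $J$-equivariance of $\epsilon$ can be checked as you indicate, or generically plus separatedness as in the paper.
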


\newcommand{\cotangentBundleOfBaseAffineSpaceREGULARPOINTS}{T^*(G/U)_{\mathrm{reg}}}
\begin{proof}
We will repeatedly use the standard fact that Weil restriction satisfies base change, see Proposition \ref{Base Change of Weil Restriction}. 
As above, we let $q: \tgdreg \to \LGdreg$ denote the projection map. We will also use the fact that we have a canonical isomorphism \begin{equation}\label{Weil restriction of final object}\LGdreg\xrightarrow{\sim} \Res^W_q(\tgdreg) = (\LG^{*}_{\mathrm{reg}})^W\end{equation} given by the unit map; one can check this directly by verifying that $\LGdreg$ satisfies the appropriate universal property.

 We claim that the structure map \begin{equation}\label{Structure Map for Invariant Weil Restriction of Regular Cotangent Bundle}\Res^W_{q}(\cotangentBundleOfBaseAffineSpaceREGULARPOINTS) \to \LGdreg\end{equation} is a Zariski $\Res^W_{\LTd \to \LC}(T^*T)$-torsor. 
 
 Indeed, by Proposition \ref{Cotangent Bundle of Base Affine Space is Torsor Over Subsets of LGdreg}, there is a collection of open subsets $\mathbf{U}_{\alpha} \subseteq \LGdreg$ which cover $\LGdreg$ such that if $\widetilde{\mathbf{U}}_{\alpha} := \tgdreg \times_{\LGdreg} \mathbf{U}_{\alpha}$ then there is a $T^*T$-equivariant isomorphism \begin{equation}\label{Torsor Iso To Apply Invariant Weil Restriction To}\widetilde{\mathbf{U}}_{\alpha} \times_{\LTd} T^*T \cong \widetilde{\mathbf{U}}_{\alpha} \times_{\tgdreg} (\tgdreg \times_{\LTd} T^*T) \xrightarrow{\sim} \widetilde{\mathbf{U}}_{\alpha} \times_{\tgdreg} \cotangentBundleOfBaseAffineSpaceREGULARPOINTS\end{equation} over $\widetilde{\mathbf{U}}_{\alpha}$. We first claim that the open subsets $\Res_q^W(\widetilde{\mathbf{U}}_{\alpha})$ form an open cover of $\LGdreg$.\footnote{In general, Weil restriction of an open cover need not be an open cover, see \cite[Example 6.4.1]{StacksProjectExpositoryCollection}.} Indeed, compatibility with the unit map $\mathbf{U}_{\alpha} \to \Res_q^W(\widetilde{\mathbf{U}}_{\alpha})$ implies that, for every $\alpha$, $\Res_q^W(\widetilde{\mathbf{U}}_{\alpha}) \to \LGdreg$ has $\mathbf{U}_{\alpha}$ in its image, and so the fact that the $\Res_q^W(\widetilde{\mathbf{U}}_{\alpha})$ form an open cover follows from the fact that the $\mathbf{U}_{\alpha}$ form an open cover of $\LGdreg$. 
 
 We now show that the map \labelcref{Weil restriction of final object} is can be trivialized on every open subset $\Res_q^W(\widetilde{\mathbf{U}}_{\alpha})$. Observe that, since the functor $\Res^W$ commutes with fiber products we deduce isomorphisms \begin{equation}\label{Torsor Iso To Apply Invariant Weil Restriction To middle} \Res_{q}^W(\widetilde{\mathbf{U}}_{\alpha}) \times_{\Res_{q}^W(\tgdreg)} \Res_{q}^W(\tgdreg \times_{\LTd} T^*T) \xrightarrow{\sim} \Res_{q}^W(\widetilde{\mathbf{U}}_{\alpha}) \times_{\Res_{q}^W(\tgdreg)} \Res_{q}^W(\cotangentBundleOfBaseAffineSpaceREGULARPOINTS)\end{equation} and, using \labelcref{Weil restriction of final object} and the fact that Weil restriction satisfies base change, we obtain an isomorphism \begin{equation}\label{Torsor Iso To Apply Invariant Weil Restriction To end}\Res_{q}^W(\widetilde{\mathbf{U}}_{\alpha})  \times_{\LGdreg} (\LGdreg \times_{\LC} \Res^W_{\LTd \to \LC}(T^*T)) \xrightarrow{\sim} \Res_{q}^W(\widetilde{\mathbf{U}}_{\alpha}) \times_{\LGdreg}\Res_{q}^W(\cotangentBundleOfBaseAffineSpaceREGULARPOINTS)\end{equation} for every $\alpha$. Observe that the natural map $\Res_q^W(\widetilde{\mathbf{U}}_{\alpha}) \to \LGdreg$ induced by $\Res_q^W$ and \labelcref{Weil restriction of final object} is an open embedding by \cite[Proposition 6.4.2]{BoschLutkebohmertRaynaudNeronModels}. Since the projection map gives an isomorphism \[\Res_{q}^W(\widetilde{\mathbf{U}}_{\alpha})   \times_{\LC} \Res^W_{\LTd \to \LC}(T^*T)\xleftarrow{\sim} \Res_{q}^W(\widetilde{\mathbf{U}}_{\alpha})  \times_{\LGdreg} (\LGdreg \times_{\LC} \Res^W_{\LTd \to \LC}(T^*T)),\] from \labelcref{Torsor Iso To Apply Invariant Weil Restriction To end} we deduce that that \labelcref{Structure Map for Invariant Weil Restriction of Regular Cotangent Bundle} is a Zariski $\Res_{\LTd \to \LC}^W(T^*T)$-torsor, as desired.

In particular, the map \labelcref{Structure Map for Invariant Weil Restriction of Regular Cotangent Bundle} is smooth. From this, we deduce that $\Res^W_{q}(\cotangentBundleOfBaseAffineSpaceREGULARPOINTS)$ is connected, since if we could write $\Res^W_{q}(\cotangentBundleOfBaseAffineSpaceREGULARPOINTS)$ as a union of two nonempty closed, open subsets $\mathbf{U}_1, \mathbf{U}_2$ then the image of the $\mathbf{U}_i$ would intersect each $\mathbf{U}_{\alpha}$ nontrivially since flat maps are open, but this would violate that the preimage of any $\mathbf{U}_{\alpha}$ under the torsor map is connected via the isomorphism \labelcref{Torsor Iso To Apply Invariant Weil Restriction To end}. The isomorphism \labelcref{Torsor Iso To Apply Invariant Weil Restriction To end} also proves that $\Res^W_{q}(\cotangentBundleOfBaseAffineSpaceREGULARPOINTS)$ is reduced, since it is reduced affine locally. Therefore, since $\Res^W_{q}(\cotangentBundleOfBaseAffineSpaceREGULARPOINTS)$ is quasiprojective (and thus Noetherian) by \cite[Proposition 6.2.10]{StacksProjectExpositoryCollection}, a standard argument (see for example \cite[Exercise 5.3.C]{VakilRisingSeaFoundationsofAlgebraicGeometry}) gives that $\Res^W_{q}(\cotangentBundleOfBaseAffineSpaceREGULARPOINTS)$ is integral. Finally, the fact that the map \labelcref{Structure Map for Invariant Weil Restriction of Regular Cotangent Bundle} is a torsor for $\Res_{\LTd \to \LC}^W(T^*T)$ gives that the map $\Res^W_{q}(\cotangentBundleOfBaseAffineSpaceREGULARPOINTS) \to \LC$ is dominant. In particular, the map of functions $\Symt^W \to \O(\Res^W_{q}(\cotangentBundleOfBaseAffineSpaceREGULARPOINTS))$ and thus its base change $\Symt \to \Symt \otimes_{\Symt^W}\O(\Res^W_{q}(\cotangentBundleOfBaseAffineSpaceREGULARPOINTS))$ by the flat $\O(\LC)$-module $\Symt$ are both injective. 

Since the map $\varpi: \Tpsirt \to \cotangentBundleOfBaseAffineSpaceREGULARPOINTS$ is $W$-equivariant, by the universal property of the $W$-invariant Weil restriction we obtain an induced map $\epsilon: \Tpsir \to \Res^W(\cotangentBundleOfBaseAffineSpaceREGULARPOINTS)$ whose base change $\epsilon_{\LTd}$ to $\LTd$ has the property that $c(\cotangentBundleOfBaseAffineSpaceREGULARPOINTS) \circ \epsilon_{\LTd} \cong \varpi$. Our assumption that $G$ does not contain $\SO_{2n + 1}$ as a direct factor implies that we may identify $J \cong \Res^W(T^*T)$, as we have recalled above in \labelcref{Isomorphism of Ngo}. Since both the counit map $c(\cotangentBundleOfBaseAffineSpaceREGULARPOINTS)$ and $\varpi$ are $\Res^W(T^*T)$-equivariant, we deduce that $\epsilon_{\LTd}$ is $\Res^W(T^*T)$-equivariant generically. Since $\Tpsirt$ is separated (as it is an affine scheme) we deduce that $\epsilon_{\LTd}$ is $\Res^W(T^*T)$-equivariant everywhere.  Thus $\epsilon_{\LTd}$ is a morphism of $\Res^W(T^*T)$-torsors and therefore is an isomorphism.
\end{proof}

\printbibliography
\end{document}